\providecommand{\bA}{\boldsymbol{A}}
\newtheorem{theorem}{Theorem}[section]
\newtheorem{proposition}[theorem]{Proposition}
\newtheorem{lemma}[theorem]{Lemma}
\newtheorem{corollary}[theorem]{Corollary}
\theoremstyle{definition}
\newtheorem{example}[theorem]{Example}
\providecommand{\floor}[1]{\left\lfloor#1\right\rfloor}
\providecommand{\bA}{\boldsymbol{A}}
\providecommand{\be}{\begin{equation}}
\providecommand{\ee}{\end{equation}}
\def\ba#1\ea{\begin{align}#1\end{align}}
\def\bas#1\eas{\begin{align*}#1\end{align*}}
\begin{document}
\title{
A spectral version of the Moore problem for bipartite regular graphs}
\author{Sebastian M. Cioab\u{a}$^{*}$, Jack H. Koolen$^{\dagger}$,  and 
Hiroshi Nozaki$^\ddagger$ \\
\small $^*$Department of Mathematical Sciences,\vspace{-3pt}\\
\small University of Delaware, Newark DE 19716-2553, USA\vspace{4pt}\\
\small $^\dagger$ School of Mathematical Sciences,\vspace{-3pt}\\
\small  University of Science and Technology of China,\vspace{-3pt}\\
\small Wen-Tsun Wu Key Laboratory of the Chinese Academy of Sciences, Hefei, Anhui, China \vspace{4pt}\\
\small $^\ddagger$  Department of Mathematics Education,\vspace{-3pt}\\
\small   Aichi University of Education, 1 Hirosawa, Igaya-cho, Kariya, Aichi 448-8542, Japan \vspace{4pt}\\
\small {\tt cioaba@udel.edu, koolen@ustc.edu.cn, hnozaki@auecc.aichi-edu.ac.jp}}
\date{\today}
\maketitle

\footnotetext[1]{SMC is supported by the NSF grants DMS-1600768 and CCF-1815992.}
\footnotetext[2]{JHK is partially supported by the National Natural Science Foundation of China (Nos.\ 11471009 and 11671376). He also acknowledges the financial support of the Chinese Academy of Sciences under its `100 talent' program.}
\footnotetext[3]{HN is partially supported by JSPS Grants-in-Aid for Scientific Research Nos.\ 16K17569 and 17K0515501. 
}
\begin{abstract}
Let $b(k,\theta)$ be the maximum order of a connected bipartite $k$-regular graph whose second largest eigenvalue is at most $\theta$. In this paper, we obtain a general upper bound for $b(k,\theta)$ for any $0\leq \theta< 2\sqrt{k-1}$. Our bound gives the exact value of $b(k,\theta)$ whenever there exists a bipartite distance-regular graph of degree $k$, second largest eigenvalue $\theta$, diameter $d$ and girth $g$ such that $g\geq 2d-2$. For certain values of $d$, there are infinitely many such graphs of various valencies $k$. However, for $d=11$ or $d\geq 15$, we prove that there are no bipartite distance-regular graphs with $g\geq 2d-2$. 
\end{abstract}
\textbf{Key words}: 
second eigenvalue, bipartite regular graph, bipartite distance-regular graph, expander, 
linear programming bound.  

\section{Introduction}

Let $\Gamma=(V,E)$ be a connected $k$-regular simple graph with $n$ vertices. For $1\leq i\leq n$, let $\lambda_i(\Gamma)$ denote the $i$-th largest eigenvalue of the adjacency matrix of $\Gamma$. The eigenvalues have close relationships with other graph invariants. The smallest eigenvalue $\lambda_n(\Gamma)$ is related to the diameter, the chromatic number and the independence number (see \cite{Ch89} or \cite[Chapter 4]{BH12} for example). The second eigenvalue $\lambda_2(\Gamma)$ plays a fundamental role in the study of expanders \cite{A86, AM85, BH12, HLW06}. Let $v(k,\theta)$ denote the maximum order of a connected $k$-regular graph $\Gamma$ with $\lambda_2(\Gamma) \leq \theta$. For $\theta < 2\sqrt{k-1}$, from work of Alon and Boppana, and Serre, we know that the value $v(k,\theta)$ is finite (see \cite{CKNV16} and the references therein). In \cite{CKNV16}, we obtained the following upper bound for $v(k,\theta)$. Let $\boldsymbol{T}(k,t,c)$ be the $t \times t$ tridiagonal matrix  with lower diagonal $(1,1,\ldots,1,c)$, upper diagonal $(k,k-1,\ldots, k-1)$, and with constant row sum $k$.  If $\theta$ is the second largest eigenvalue of $\boldsymbol{T}(k,t,c)$, then
\begin{equation}\label{vktau}
v(k,\theta) \leq 1+\sum_{i=0}^{t-3} k(k-1)^i+ \frac{k(k-1)^{t-2}}{c}. 
\end{equation}
Equality holds in \eqref{vktau} if and only if there is a distance-regular graph of valency $k$ with second largest eigenvalue $\theta$, girth $g$ and diameter $d$ satisfying $g\geq 2d$. For $d>6$, there are no such graphs \cite{DG81}. However, for smaller values of $d$, there are infinitely many values of $k$ and $\theta$ where the above inequality gives the exact value of $v(k,\theta)$. %Inequality \eqref{vktau} is based on the linear programming bound for regular graphs \cite{N} which will be improved in the case of bipartite graphs in Section~\ref{sec:lp}.

In this paper, we improve the above results from \cite{CKNV16} for bipartite regular graphs. Let $b(k,\theta)$ denote the maximum order of a connected bipartite $k$-regular graph $\Gamma$ with $\lambda_2(\Gamma) \leq \theta$.  Bipartite regular graphs $\Gamma$ with $\lambda_2(\Gamma) \leq \theta$  have been classified for $\theta=\sqrt{2}$ \cite{TY00}, $\theta=\sqrt{3}$ \cite{KS13}, and $\theta=2$~\cite{KS14}. We obtain a general upper bound for $b(k,\theta)$ for any $0\leq \theta< 2\sqrt{k-1}$. Our bound gives the exact value of $b(k,\theta)$ whenever there exists a bipartite distance-regular graph of degree $k$ with second largest eigenvalue $\theta$, diameter $d$ and girth $g$ such that $g\geq 2d-2$. For certain values of $d$, there are infinitely many such graphs of various valencies $k$. When $d=11$ or $d\geq 15$, we prove the non-existence of bipartite distance-regular graphs with $g\geq 2d-2$. Our results generalize previous work of H\o holdt and Justesen \cite{HJ14} obtained in their study of graph codes and imply some results of Li and Sol\'{e} \cite{CLS96} relating the second largest eigenvalue of a bipartite regular graph to its girth. The degree-diameter or Moore problem for graphs \cite{MS} is about determining the largest graphs of given maximum degree and diameter. Given the connections between the diameter and the second largest eigenvalue of bipartite regular graphs (see \cite{Ch89} for example), our Theorem \ref{bound} can be interpreted as a spectral version of the Moore problem for bipartite regular graphs. 

In Section~\ref{sec:pre}, we describe some sequences of orthogonal polynomials and develop the preliminary results and notation that will be used in the paper. In Section~\ref{sec:lp}, we improve the linear programming bound from \cite{N} for the class of bipartite regular graphs.  In Section~\ref{sec:newbound}, we obtain the following upper bound for $b(k,\theta)$. Let $\boldsymbol{B}(k,t,c)$ be the $t \times t$ tridiagonal matrix  with lower diagonal $(1,\ldots,1,c,k)$, upper diagonal $(k,k-1,\ldots, k-1,k-c)$, and constant row sum $k$. 
If $\theta$ is the second largest eigenvalue of $\boldsymbol{B}(k,t,c)$, then
\begin{equation}\label{bktau}
b(k,\theta) \leq 2\left(\sum_{i=0}^{t-4} (k-1)^{i}+ \frac{(k-1)^{t-3}}{c}+\frac{(k-1)^{t-2}}{c}\right).  
\end{equation}
We show that equality happens in \eqref{bktau} when there is a bipartite distance-regular graph of degree $k$, second largest eigenvalue $\theta$ having $g\geq 2d-2$. Inequality \eqref{bktau} generalizes some results of H\o holdt and Justesen \cite{HJ14} (see Corollaries~\ref{coro:0} and \ref{coro:1}), and of Li and Sol\'{e} \cite{CLS96} (see Corollary~\ref{coro:Alon}). 
At the end of Section~\ref{sec:newbound}, we prove that the bound \eqref{bktau} is better than \eqref{vktau} for any $k$ and $\theta$. In Section~\ref{sec:bound_d}, we prove the non-existence of bipartite distance-regular graphs with $g\geq 2d-2$ for $d=11$ and $d\geq 15$. We conclude the paper with some remarks in Section~\ref{sec:conclusions}.

\section{Preliminaries} \label{sec:pre}
In this section, we describe some useful polynomials that will be used to prove our main result. For any integer $k\geq 2$, let $(F_i^{(k)})_{i\geq 0}$ be a sequence of orthogonal polynomials defined by the three-term recurrence relation:  
\begin{equation*}
F_0^{(k)}(x)=1, \qquad F_1^{(k)}(x)=x, \qquad  F_2^{(k)}(x)=x^2 - k,   
\end{equation*}
and
\begin{equation} \label{eq:3-term}
F_i^{(k)}(x)=x F_{i-1}^{(k)}(x)- (k-1) F_{i-2}^{(k)}(x)
\end{equation}
for $i\geq 3$. 
The notation $F_i^{(k)}$ is abbreviated to $F_i$ for the rest of the paper. Let $q=\sqrt{k-1}$. The polynomials $(F_i)_{i\geq 0}$ form a sequence of orthogonal polynomials 
with respect to the positive weight
\[
w(x)=\frac{\sqrt{4q^2-x^2}}{k^2-x^2}
\]
on the interval $[-2q, 2q]$ 
(see \cite[Section 4]{HOb}). 
The polynomials $F_i(q y)/ q^i$ in $y$ are called Geronimus
polynomials \cite{G30,H15}. 
It follows from \eqref{eq:3-term} that 
%\begin{align*}
%&F_0^{(k)}(x)=1,&  &F_1^{(k)}(x)=x,&   
%&F_2^{(k)}(x)=x^2 - k,&   \\
%&F_3^{(k)}(x)=x^3-(2k-1)x,& 
%&F_4^{(k)}(x)=x^4-(3k-2)x^2+k(k-1),&
%&%F_5^{(k)}(x)=x^5 -(4k-3)x^3+(3k-1)(k-1)x, 
%&
%\end{align*}
%and
\begin{equation} \label{eq:3-term2}
F_i(x)=(x^2-2k+2) F_{i-2}(x)- (k-1)^2 F_{i-4}(x)
\end{equation}
for $i\geq 5$. Note that for any $i\geq 0$, $F_{2i}(x)$ and $F_{2i+1}(x)$ are 
even and odd functions of $x$, respectively.

For $i\geq 0$, let $\mathscr{F}_{0,i}(x)=F_{2i}(\sqrt{x})$ and $\mathscr{F}_{1,i}(x)=F_{2i+1}(\sqrt{x})/\sqrt{x}$. 
It follows that $x^{\epsilon} \mathscr{F}_{\epsilon,i}(x^2)=F_{2i+\epsilon}(x)$ for $\epsilon\in \{0,1\}$. 
By \eqref{eq:3-term2}, the polynomials $\mathscr{F}_{0,i}(x)$ and $\mathscr{F}_{1,i}(x)$  satisfy the following properties:
\[
\mathscr{F}_{0,0}(x)=1, \qquad 
\mathscr{F}_{0,1}(x)=x-k, \qquad
\mathscr{F}_{0,2}(x)=x^2-(3k-2)x+k(k-1), 
\]
\[
\mathscr{F}_{1,0}(x)=1, \qquad 
\mathscr{F}_{1,1}(x)=x-(2k-1),
\]
and 
\begin{equation} \label{eq:new_3-term}
\mathscr{F}_{\epsilon, i}(x)=(x-2k+2) \mathscr{F}_{\epsilon,i-1}(x)-(k-1)^2 \mathscr{F}_{\epsilon, i-2}(x)
\end{equation}
for any $i\geq 3$ if $\epsilon=0$, and $i\geq 2$ if $\epsilon=1$.  
Note that $k^{\epsilon} \mathscr{F}_{\epsilon,i}(k^2)=F_{2i+\epsilon}(k)=k(k-1)^{2i-1+\epsilon}=(k-1)^{2i-1+\epsilon}+(k-1)^{2i+\epsilon}$ for $2i+\epsilon \ne 0$. 
For $\epsilon\in \{0,1\}$, the polynomials $(\mathscr{F}_{\epsilon,i})_{i\geq 0}$  form a sequence of orthogonal polynomials with respect to the positive weight 
\[
w_\epsilon(x)=\frac{x^{\epsilon-1/2} \sqrt{4q^2-x}}{k^2-x}
\]
on the interval $[0, 4q^2]$.  

For $i\geq 0$, let $G_i(x)=\sum_{j=0}^{\floor{i/2}}F_{i-2j}(x)$. A simple calculation implies that
\begin{equation} \label{eq:G_i}
{G}_{i}(x)=
\frac{{F}_{i+2}(x)-(k-1)^2 {F}_{i}(x) }{x^2-k^2}
\end{equation}
for $i\geq 1$.
From Lemmas 3.3 and 3.5 in \cite{CK07}, the polynomials $(G_i)_{i\geq 0}$  form a sequence of 
orthogonal polynomials with respect to 
the positive weight $(k^2-x^2)w(x)=\sqrt{4q^2-x^2}$ on the interval $[-2q,2q]$. From \eqref{eq:3-term}, we deduce that
\[
G_i(x)=xG_{i-1}(x)-(k-1)G_{i-2}(x)
\]
for $i\geq 2$. 

Let $\mathscr{G}_{\epsilon, i}(x)$ denote the 
polynomial 
\begin{equation} \label{eq:def_scrG}
\mathscr{G}_{\epsilon, i}(x)=\sum_{j=0}^i
\mathscr{F}_{\epsilon, j}(x). 
\end{equation} 
It follows that 
$x^{\epsilon} \mathscr{G}_{\epsilon, j}(x^2)=G_{2j+\epsilon}(x)$. Using \eqref{eq:G_i}, the polynomial $\mathscr{G}_{\epsilon, i}(x)$ can be expressed as
\begin{equation} \label{eq:pro_scrG}
\mathscr{G}_{\epsilon, i}(x)=
\frac{\mathscr{F}_{\epsilon, i+1}(x)-(k-1)^2\mathscr{F}_{\epsilon, i}(x) }{x-k^2}
\end{equation}
for any $i\geq 2$ if $\epsilon=0$, and
$i\geq 1$ if $\epsilon=1$. 
From Lemmas 3.3 and 3.5 in \cite{CK07}, 
for $\epsilon\in \{0,1\}$, the polynomials $(\mathscr{G}_{\epsilon,i})_{i\geq 0}$ form  a sequence of orthogonal polynomials with respect to 
the positive weight $(k^2-x)w_{\epsilon}(x)=x^{\epsilon-1/2}\sqrt{4q^2-x}$ on the interval $[0,4q^2]$. 

\begin{lemma} \label{lem:coe}
Let $p_l(i,j)$ be the coefficients in 
$x^{\epsilon} \mathscr{F}_{\epsilon,i}(x)\mathscr{F}_{\epsilon,j}(x)=\sum_{l=0}^{i+j+\epsilon}p_l(i,j)
\mathscr{F}_{0,l}(x)$. Then
we have 
$p_0(i,j)=
k^{\epsilon}\mathscr{F}_{\epsilon,i}(k^2)\delta_{i,j}$, and 
$p_l(i,j)\geq 0$ for any $l,i,j$. Moreover 
$p_l(i,j)>0$ if and only if $|i-j|\leq l \leq i+j+\epsilon$.  
\end{lemma}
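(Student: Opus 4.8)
The plan is to give a combinatorial interpretation of the polynomials $\mathscr{F}_{\epsilon,i}$ and of the coefficients $p_l(i,j)$ via the infinite $k$-regular tree (equivalently, via a $k$-regular graph of large girth). First I would substitute $x\mapsto x^2$ in the defining relation for $p_l(i,j)$: since $x^{\epsilon}\mathscr{F}_{\epsilon,m}(x^2)=F_{2m+\epsilon}(x)$ and $\mathscr{F}_{0,l}(x^2)=F_{2l}(x)$, this rewrites the claim as the linearization identity
\[
F_{2i+\epsilon}(x)\,F_{2j+\epsilon}(x)=\sum_{l=0}^{i+j+\epsilon}p_l(i,j)\,F_{2l}(x),
\]
so it suffices to understand products $F_aF_b$. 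Concretely, I would establish $F_aF_b=\sum_c p^{c}_{ab}F_c$ for non-negative integers $p^{c}_{ab}$ and then specialize to $(a,b,c)=(2i+\epsilon,2j+\epsilon,2l)$.

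The key step is to recognize $F_m$ as the distance-$m$ polynomial of the infinite $k$-regular tree $T=T_k$. If $A$ is the adjacency operator of $T$ and $A_m$ is the $\{0,1\}$-matrix with $(A_m)_{uv}=1$ exactly when $\dist(u,v)=m$, then $A_0=I$, $A_1=A$, $A^2=A_2+kI$, and $AA_m=A_{m+1}+(k-1)A_{m-1}$ for $m\ge2$ — the last because $(AA_m)_{uv}$ counts neighbours of $u$ at distance $m$ from $v$, which in a tree is $1$ when $\dist(u,v)=m+1$ and $k-1$ when $\dist(u,v)=m-1$; comparing with \eqref{eq:3-term} gives $A_m=F_m(A)$ for all $m$. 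Since $T$ is distance-transitive, the quantity $p^{c}_{ab}:=\#\{w:\dist(u,w)=a,\ \dist(w,v)=b\}$ depends only on $c=\dist(u,v)$, is a non-negative integer, and vanishes for all but finitely many $c$, so $A_aA_b=\sum_c p^{c}_{ab}A_c$. As the spectrum of $A$ is the infinite set $[-2q,2q]$, the operator identity $F_a(A)F_b(A)=\sum_c p^{c}_{ab}F_c(A)$ forces the polynomial identity $F_aF_b=\sum_c p^{c}_{ab}F_c$, whence $p_l(i,j)=p^{2l}_{2i+\epsilon,\,2j+\epsilon}\ge0$. (To stay finite-dimensional one can instead work inside a $k$-regular graph $G$ of girth $>2(i+j+\epsilon)+2$, which exists for every $k$: there $F_m(A_G)$ equals the distance-$m$ matrix of $G$ for $m\le i+j+\epsilon$, and distinct distance matrices are linearly independent because they have disjoint supports.)

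It then remains to compute $p^{c}_{ab}$, in particular to decide its positivity. Fixing $u,v$ at distance $c$ with geodesic $u=x_0,x_1,\dots,x_c=v$, each vertex $w$ of $T$ has a unique nearest point $x_m$ on the geodesic, at some distance $s\ge0$, and then $\dist(u,w)=m+s$ and $\dist(w,v)=(c-m)+s$; imposing that these equal $a$ and $b$ forces $m=(a-b+c)/2$ and $s=(a+b-c)/2$. Hence $p^{c}_{ab}>0$ if and only if $m\in\{0,1,\dots,c\}$ and $s\ge0$ are integers, i.e.\ if and only if $|a-b|\le c\le a+b$ and $a+b+c$ is even; in that case such a $w$ does exist — take $w=x_m$ when $s=0$, and otherwise attach a path of length $s$ to $x_m$ through a neighbour off the geodesic, of which $x_m$ has at least $k-2\ge1$ (here $k\ge3$ is used). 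With $(a,b,c)=(2i+\epsilon,2j+\epsilon,2l)$ the parity condition is automatic and $|a-b|\le c\le a+b$ reads $|i-j|\le l\le i+j+\epsilon$, which is the asserted criterion; moreover $l=0$ forces $i=j$, and $p^{0}_{a,a}=|\Gamma_a(u)|=k(k-1)^{a-1}$ for $a\ge1$ (and $=1$ for $a=0$), which by the identity recorded just before the lemma equals $k^{\epsilon}\mathscr{F}_{\epsilon,i}(k^2)$, giving $p_0(i,j)=k^{\epsilon}\mathscr{F}_{\epsilon,i}(k^2)\delta_{i,j}$.

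The main obstacle will be the positivity of $p_l(i,j)$ on the \emph{whole} interval $|i-j|\le l\le i+j+\epsilon$, not merely $p_l(i,j)\ge0$ together with the determination of its support (the support bound $l\ge|i-j|$ also follows from orthogonality, since for $l<|i-j|$ the polynomial $x^{\epsilon}\mathscr{F}_{\epsilon,i}\mathscr{F}_{\epsilon,j}$ is orthogonal to $\mathscr{F}_{0,l}$ with respect to $w_0$, because $\mathscr{F}_{\epsilon,j}\mathscr{F}_{0,l}$ then has degree $<i$). A direct induction using the recurrence \eqref{eq:new_3-term} runs into its non-homogeneous low-index boundary terms, whereas the tree model supplies explicit witnesses. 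Finally, one must keep $k\ge3$ in mind: for $k=2$ the tree is a path, there is no branching, only the extreme values $l=|i-j|$ and $l=i+j+\epsilon$ occur, and the "if and only if" statement fails.
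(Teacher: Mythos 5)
Your proof is correct, and it takes a genuinely more self-contained route than the paper's. Both arguments begin with the same reduction: substituting $x\mapsto x^2$ turns the claim into the linearization identity $F_{2i+\epsilon}F_{2j+\epsilon}=\sum_l p_l(i,j)F_{2l}$. At that point the paper simply invokes Theorem~3 of \cite{N}, which asserts exactly the non-negativity, the value of $p_0$, and the support condition for products of the polynomials $F_a$; you instead prove that statement from scratch by identifying $F_m(\bA)$ with the distance-$m$ matrix of the infinite $k$-regular tree (via \eqref{eq:3-term}, or equivalently via Singleton's non-backtracking-walk interpretation \cite{S66} inside a large-girth regular graph), so that $p^{c}_{ab}$ becomes the intersection number $\#\{w:\dist(u,w)=a,\ \dist(w,v)=b\}$ for $\dist(u,v)=c$, whose positivity is settled by the explicit nearest-point computation $m=(a-b+c)/2$, $s=(a+b-c)/2$. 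Your witness construction and the evaluation $p^{0}_{a,a}=k(k-1)^{a-1}=F_a(k)$ are correct, and the passage from the operator identity to the polynomial identity is justified either by the infinite spectrum of the tree or by the linear independence of the distance matrices in a large-girth graph. What the paper's citation buys is brevity; what your argument buys is transparency, plus an observation the paper glosses over: the strict positivity of $p_l(i,j)$ on the \emph{whole} range $|i-j|\le l\le i+j+\epsilon$ requires $k\ge 3$ (internal geodesic vertices need a branch off the geodesic), and indeed fails for $k=2$, e.g.\ $\mathscr{F}_{0,1}^2=\mathscr{F}_{0,2}+2\mathscr{F}_{0,0}$ when $k=2$. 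This restriction is implicit in the hypotheses of \cite{N} and is harmless here, since for $k=2$ the only graphs in question are cycles, but it is worth recording.
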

\begin{proof}
We have that
\[
F_{2i+\epsilon}(x)F_{2j+\epsilon}(x)=
x^{2\epsilon}\mathscr{F}_{\epsilon,i}(x^2)\mathscr{F}_{\epsilon,j}(x^2)
=
\sum_{l=0}^{i+j+\epsilon}p_l(i,j)
\mathscr{F}_{0,l}(x^2)
=
\sum_{l=0}^{i+j+\epsilon}p_l(i,j)F_{2l}(x).
\]
By Theorem 3 in \cite{N}, we obtain that 
$p_0(i,j)=F_{2i+\epsilon}(k) \delta_{i,j}=
k^{\epsilon}\mathscr{F}_{\epsilon,i}(k^2)\delta_{i,j}$, and 
$p_l(i,j)\geq 0$ for any $l,i,j$. Moreover 
$p_l(i,j)>0$ if and only if $|i-j|\leq l \leq i+j+\epsilon$.  
\end{proof}
Let $\Gamma$ be a connected regular bipartite graph. 
The adjacency matrix $\bA$ of $\Gamma$ can be expressed by 
\[
\boldsymbol{A}=
\begin{pmatrix}
\boldsymbol{O} & \boldsymbol{N}\\
\boldsymbol{N}^{\top} & \boldsymbol{O} 
\end{pmatrix}, 
\]
where $\boldsymbol{N}^{\top}$ is the transpose matrix of $\boldsymbol{N}$. 
The matrix $\boldsymbol{N}$ is called the {\it biadjacency matrix} of $\Gamma$. 
It is not hard to see that 
\begin{equation} \label{eq:AB}
F_{2i}(\boldsymbol{A})= 
\begin{pmatrix}
\mathscr{F}_{0,i}(\boldsymbol{N}\boldsymbol{N}^{\top}) & \boldsymbol{O} \\
\boldsymbol{O} & \mathscr{F}_{0,i}(\boldsymbol{N}^{\top} \boldsymbol{N})
\end{pmatrix}.  
\end{equation}
%If $i$ is odd, then
%\[
%F_i(\boldsymbol{A})= 
%\begin{pmatrix}
%\boldsymbol{O} &\mathscr{F}_i(\boldsymbol{N}\boldsymbol{N}^{\top}) \boldsymbol{N}  \\
% \mathscr{F}_i(\boldsymbol{N}^{\top} \boldsymbol{N})\boldsymbol{N}^{\top} & \boldsymbol{O} 
%\end{pmatrix},   
%\]
%which is needed to write??
Since each entry of $F_{2i}(\boldsymbol{A})$ is non-negative \cite{S66}, 
each entry of $\mathscr{F}_{0,i}(\boldsymbol{N}\boldsymbol{N}^{\top})$ is also non-negative. 
%From the positivity of $\mathscr{F}_{0,i}(\boldsymbol{N}\boldsymbol{N}^{\top})$,  
%we can obtain the linear programming bound for regular bipartite graphs.  

\section{Linear programming bound for bipartite regular graphs} 
\label{sec:lp}

In this section, we give a linear programming bound for bipartite regular graphs. For general regular graphs, a linear programming bound was obtained by Nozaki \cite{N}. %As we will see in Table~\ref{tab:1} and Example~\ref{ex:1}, the {\em bipartite} linear programming bound below gives an improvement over the general linear programming bound from \cite{N}.      
\begin{theorem}
\label{thm:lp_bound}
Let $\Gamma$ be a connected bipartite $k$-regular graph with $v$ vertices. 
Let $\{\pm \tau_0, \ldots, \pm \tau_d \}$ be the set of distinct eigenvalues of $\Gamma$, 
where $\tau_0=k$.  
If there exists a polynomial $f(x)=\sum_{i= 0}^t  f_i \mathscr{F}_{0,i}(x)$ such that 
$f(k^2)>0$, $f(\tau_i^2) \leq 0$ for each $i\in  \{1,\ldots, d \}$, $f_0>0$, and $f_j \geq 0$ for each $j \in \{1,\ldots, t \}$, then 
\begin{equation} 
\label{eq:lp} 
v \leq  \frac{2 f(k^2)}{f_0}. 
\end{equation}
Equality holds if and only if for each $i \in \{1,\ldots, d\}$, $f(\tau_i^2)=0$ and for each $j\in \{1,\dots,t\}$, ${\rm tr}(f_j\mathscr{F}_{0,j}(\boldsymbol{N}\boldsymbol{N}^{\top}))=0$, 
and ${\rm tr} (f_j\mathscr{F}_{0,j}(\boldsymbol{N}^{\top} \boldsymbol{N}))=0$, 
where $\boldsymbol{N}$ is the biadjacency matrix of $\Gamma$.
If equality holds and $f_j>0$ for each $j \in \{1,\ldots, t\}$, then the girth of $\Gamma$ is at least  
$2t+2$. 
\end{theorem}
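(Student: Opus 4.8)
The plan is to mimic the classical linear-programming (Delsarte-style) argument, but carried out in the "halved" polynomial system $(\mathscr{F}_{0,i})$ attached to the matrix $M:=\boldsymbol{N}\boldsymbol{N}^{\top}$ (equivalently $\boldsymbol{N}^{\top}\boldsymbol{N}$), whose eigenvalues are exactly the squares $\tau_i^2$ of the eigenvalues of $\Gamma$. First I would fix the two color classes $V=X\cup Y$, say $|X|=m$, $|Y|=n$, with $m+n=v$; since $\Gamma$ is connected and $k$-regular, $|X|=|Y|=v/2$ (the all-ones vectors on $X$ and on $Y$ are the Perron eigenvectors of $M$ on each side, with eigenvalue $k^2=\tau_0^2$, and $M$ is a nonnegative irreducible matrix of constant row sum $k^2$). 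Then I would evaluate $\operatorname{tr} f(M)$ in two ways. On one hand, writing $f(x)=\sum_{i=0}^t f_i\mathscr{F}_{0,i}(x)$ and using that each entry of $\mathscr{F}_{0,i}(M)$ is non-negative (the last sentence of Section~2, via $\mathscr{F}_{0,i}(\boldsymbol{N}\boldsymbol{N}^\top)$ being a diagonal block of $F_{2i}(\bA)$ which has non-negative entries), together with $\mathscr{F}_{0,0}(M)=I$, we get
\[
\operatorname{tr} f(M)=\sum_{i=0}^t f_i\operatorname{tr}\mathscr{F}_{0,i}(M)\ \geq\ f_0\operatorname{tr} I=f_0\cdot\frac{v}{2},
\]
because $f_0>0$ and each $f_j\operatorname{tr}\mathscr{F}_{0,j}(M)\ge 0$ for $j\ge1$ (trace of a matrix with non-negative entries and non-negative coefficient). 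On the other hand, $\operatorname{tr} f(M)=\sum_{\text{eigenvalues }\mu\text{ of }M} f(\mu)$, and the eigenvalues of $M$ are a subset of $\{k^2,\tau_1^2,\ldots,\tau_d^2\}$; the Perron eigenvalue $k^2$ occurs with multiplicity $1$ (connectedness), contributing $f(k^2)$, and every other eigenvalue $\tau_i^2$ contributes $f(\tau_i^2)\le 0$. Hence $\operatorname{tr} f(M)\le f(k^2)$, and combining,
\[
\frac{v}{2}\ f_0\ \le\ f(k^2),
\]
which is \eqref{eq:lp}.

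For the equality discussion: equality in the chain forces $f(\tau_i^2)=0$ for every eigenvalue $\tau_i^2\ne k^2$ of $M$ (and since any $\tau_i$ not appearing in $M$'s spectrum is vacuous, this is the stated condition for all $i\in\{1,\dots,d\}$), and it forces $f_j\operatorname{tr}\mathscr{F}_{0,j}(M)=0$ for each $j\in\{1,\dots,t\}$. To get the symmetric pair of trace conditions in the statement I would note $\operatorname{tr}\mathscr{F}_{0,j}(\boldsymbol{N}\boldsymbol{N}^\top)=\operatorname{tr}\mathscr{F}_{0,j}(\boldsymbol{N}^\top\boldsymbol{N})$ for $j\ge1$ anyway (both equal half of $\operatorname{tr} F_{2j}(\bA)$ once $|X|=|Y|$), so the two conditions are equivalent and both follow. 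Conversely, if $f(\tau_i^2)=0$ for all $i\ge1$ and $f_j\operatorname{tr}\mathscr{F}_{0,j}(M)=0$ for all $j\ge1$, then both inequalities above are equalities, so $v=2f(k^2)/f_0$.

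Finally, for the girth claim: assume equality holds with $f_j>0$ for all $j\in\{1,\dots,t\}$. Then $\operatorname{tr}\mathscr{F}_{0,j}(M)=0$ for each such $j$; since $\mathscr{F}_{0,j}(M)$ has non-negative entries, a zero trace forces every diagonal entry to vanish, i.e.\ $(\mathscr{F}_{0,j}(\boldsymbol{N}\boldsymbol{N}^\top))_{uu}=0$ for all $u\in X$, and likewise on $Y$, hence $(F_{2j}(\bA))_{uu}=0$ for all vertices $u$ and all $j\le t$. Here I would use the standard fact that $F_{2j}$, being a polynomial of degree $2j$ with leading term $x^{2j}$ in the three-term family \eqref{eq:3-term}, is a linear combination of $I,\bA,\dots,\bA^{2j}$ in which the coefficient of $\bA^{2j}$ is $1$ and — by the counting interpretation of the $F$-polynomials as walk-generating polynomials on a tree/$k$-regular graph — $(F_{2j}(\bA))_{uu}$ counts closed walks of length $2j$ at $u$ that are "non-backtracking in the tree sense," so its vanishing for all $u$ up to $j=t$ is exactly equivalent to $\Gamma$ having no closed walk of length $\le 2t$ that bounds a cycle, i.e.\ girth $\ge 2t+2$. (Concretely: $(F_{2j}(\bA))_{uu}=0$ for all $u$ iff $\bA^{2j}$ and the lower powers match what they would be in a tree, forcing no cycle of length $\le 2j$; taking $j=t$ gives girth $>2t$, hence $\ge 2t+2$ by bipartiteness, which rules out odd girth anyway.)

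The main obstacle I anticipate is the last step — pinning down the precise combinatorial meaning of $(F_{2j}(\bA))_{uu}$ and turning "these diagonal entries vanish for $j\le t$" into "girth $\ge 2t+2$" rigorously. One clean way to handle it: show by induction on $j$, using \eqref{eq:3-term}, that for a graph of girth $>2j_0$ one has $F_j(\bA)_{uv}=(\text{number of non-backtracking walks of length }j\text{ from }u\text{ to }v)$ for $j\le $ something like girth$-1$, so that in particular $F_{2j}(\bA)_{uu}=0$ automatically when $2j<$ girth, and conversely the first $j$ for which some $F_{2j}(\bA)_{uu}\ne0$ detects a shortest cycle; then the vanishing hypothesis for all $j\le t$ yields girth $\ge 2t+2$. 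Everything else is routine: the two-way trace estimate and the equality analysis are entirely analogous to the non-bipartite linear-programming bound of \cite{N}, simply transported through the substitution $x\mapsto x^2$ that relates $F_{2i}(\bA)$ to $\mathscr{F}_{0,i}(\boldsymbol{N}\boldsymbol{N}^\top)$ in \eqref{eq:AB}.
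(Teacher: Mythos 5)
Your proposal is correct and follows essentially the same route as the paper: evaluate $\operatorname{tr} f(\boldsymbol{N}\boldsymbol{N}^{\top})$ in two ways (via the spectral decomposition on one side, via the non-negativity of the entries of $\mathscr{F}_{0,j}(\boldsymbol{N}\boldsymbol{N}^{\top})$ coming from \eqref{eq:AB} and Singleton's non-backtracking-walk interpretation of $F_{2j}(\bA)$ on the other), then read off the equality conditions and the girth consequence exactly as the paper does. The one step you flag as a potential obstacle — that vanishing of the diagonal of $F_{2j}(\bA)$ for $j\le t$ forces girth $\ge 2t+2$ — is handled in the paper simply by citing \cite{S66} for the walk-counting interpretation, which is the same resolution you sketch.
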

\begin{proof}
From the spectral decomposition $\boldsymbol{N}\boldsymbol{N}^{\top}= 
\sum_{i=0}^d \tau_i^2 \boldsymbol{E}_i$, we deduce that  
\begin{equation} \label{eq:pr1}
f(k^2) \boldsymbol{E}_0+ \sum_{i=1}^d f(\tau_i^2) \boldsymbol{E}_i
=f(\boldsymbol{N}\boldsymbol{N}^{\top})=\sum_{i= 0}^t  f_i \mathscr{F}_{0,i}(\boldsymbol{N}\boldsymbol{N}^{\top})=
f_0 \boldsymbol{I} + \sum_{i= 1}^t  f_i \mathscr{F}_{0,i}(\boldsymbol{N}\boldsymbol{N}^{\top}), 
\end{equation}
where  $\boldsymbol{I}$ is the identity matrix, $\boldsymbol{E}_0=(2/v)\boldsymbol{J}$, and $\boldsymbol{J}$ is the all-ones matrix. 
Taking traces in both sides of \eqref{eq:pr1}, we get that
\begin{multline*}
 f(k^2)={\rm tr}( f(k^2) \boldsymbol{E}_0 )
\geq {\rm tr}\left( f(k^2) \boldsymbol{E}_0+ \sum_{i=1}^d f(\tau_i^2) \boldsymbol{E}_i\right) \\
= 
{\rm tr}\left(  f_0 \boldsymbol{I} + \sum_{j= 1}^t  f_j\mathscr{F}_{0,j}(\boldsymbol{N}\boldsymbol{N}^{\top}) \right) 
\geq {\rm tr}(  f_0 \boldsymbol{I})
=\frac{v f_0}{2}. 
\end{multline*}
Therefore, $v\leq 2 f(k^2)/f_0$.  
By using $\mathscr{F}_{0,j}(\boldsymbol{N}^{\top} \boldsymbol{N})$, 
we can obtain the same bound as \eqref{eq:lp}. 

If equality holds in \eqref{eq:lp}, then for each $i \in \{1,\ldots, d\}$, $f(\tau_i^2)=0$ and for each $j\in \{1,\dots,t\}$, ${\rm tr}(f_j\mathscr{F}_{0,j}(\boldsymbol{N}\boldsymbol{N}^{\top}))=0$ and ${\rm tr} (f_j\mathscr{F}_{0,j}(\boldsymbol{N}^{\top} \boldsymbol{N}))=0$. 
 For the adjacency matrix $\boldsymbol{A}$,  the $(u,v)$-entry of $F_{j}(\boldsymbol{A})$ is the number of non-backtracking walks of length $j$ from $u$ to $v$ \cite{S66}. 
Since \eqref{eq:AB} and $f_j>0$ for each $j \in \{1,\ldots, t\}$, there is no non-backtracking walk of length $2j$ from $u$ to $v$ for each $j \in \{1,\ldots, t\}$. Since $\Gamma$ is bipartite, the girth of $\Gamma$ is at least  $2t+2$. 
\end{proof}
%\begin{remark}
%We may normalize $f_0=1$ in Theorem~\ref{thm:lp_bound}. 
%\end{remark}
%\begin{remark} \label{rem:2}

%\end{remark}

\section{Upper bound for bipartite graphs with given second eigenvalue}
\label{sec:newbound}

In this section, we obtain an upper bound on $b(k,\theta)$ using the {\em bipartite} linear programming bound given by Theorem \ref{thm:lp_bound}. Let $c>0$ be a real number and $t\geq 4$ be an integer. Let $\boldsymbol{B}(k,t,c)$ be the $t \times t$ tridiagonal 
matrix 
with lower diagonal $(1,\ldots,1,c,k)$, upper diagonal $(k,k-1,\ldots, k-1,k-c)$, and constant row sum $k$. 
Let
\[
\boldsymbol{B}(k,3,1)=\begin{pmatrix}
0&k&0 \\
1&0&k-1 \\
0&k& 0
\end{pmatrix}.
\]

\begin{theorem} \label{bound}
If $\theta$ is the second largest eigenvalue of $\boldsymbol{B}(k,t,c)$, then
\[
b(k,\theta) \leq M(k,t,c)=2\left(\sum_{i=0}^{t-4} (k-1)^{i}+ \frac{(k-1)^{t-3}}{c}+\frac{(k-1)^{t-2}}{c}\right). 
\]  
Equality holds if and only if there exists a bipartite distance-regular graph whose quotient matrix with respect to the distance-partition from a vertex is $\boldsymbol{B}(k,t,c)$ for $1\leq c<k$ or $\boldsymbol{B}(k,t-1,1)$ for $c=k$.   
\end{theorem}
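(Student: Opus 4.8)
The plan is to apply the bipartite linear programming bound (Theorem~\ref{thm:lp_bound}) with a carefully chosen polynomial $f$ built from the tridiagonal matrix $\boldsymbol{B}(k,t,c)$, mimicking the strategy used for general regular graphs in \cite{CKNV16}. First I would set up the dictionary between $\boldsymbol{B}(k,t,c)$ and the polynomials $\mathscr{F}_{0,i}$: the entries of $\boldsymbol{B}(k,t,c)$ are chosen so that $\theta^2$ is an eigenvalue of a closely related matrix, and one writes $f(x)=\prod_{i=1}^{t-1}(x-\mu_i)$ (suitably normalized), where $\mu_1=\theta^2>\mu_2>\cdots>\mu_{t-1}$ are the squares of the nontrivial eigenvalues of $\boldsymbol{B}(k,t,c)$. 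The key algebraic fact to establish is that when this $f$ is expanded in the basis $\{\mathscr{F}_{0,i}\}_{i=0}^{t-?}$ (degree bookkeeping: $f$ has degree $t-1$ in $x$, so expansion uses $\mathscr{F}_{0,0},\ldots,\mathscr{F}_{0,t-1}$, but the relevant block will have fewer terms), all coefficients $f_i$ are nonnegative and $f_0>0$. This nonnegativity is exactly the point where the orthogonal-polynomial structure is used: one interprets $f_i$ via the orthogonality of $(\mathscr{F}_{0,i})$ with respect to $w_0$, and shows $f_i = \langle f,\mathscr{F}_{0,i}\rangle/\langle\mathscr{F}_{0,i},\mathscr{F}_{0,i}\rangle \geq 0$ because $f$ is (up to positive scaling) a polynomial that is nonnegative on the support $[0,4q^2]$ of the measure and, more precisely, because of the sign pattern of $f$ relative to the roots of $\mathscr{F}_{0,i}$ — this is the analogue of the argument in \cite{CKNV16, N}, and Lemma~\ref{lem:coe} together with the Christoffel–Darboux/three-term recurrence machinery is what makes it work.

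Next I would verify the two remaining hypotheses of Theorem~\ref{thm:lp_bound}: that $f(k^2)>0$ and $f(\tau_i^2)\leq 0$ for all nontrivial eigenvalues $\tau_i$ of $\Gamma$. The first holds because $k^2>\theta^2=\mu_1>\mu_j$ for all $j$, so $f(k^2)=\prod(k^2-\mu_i)>0$. The second is the crucial interlacing step: since $\lambda_2(\Gamma)\leq\theta$, every nontrivial eigenvalue $\tau$ of $\Gamma$ satisfies $\tau^2\leq\theta^2=\mu_1$, and I need $f(\tau^2)\leq 0$ on the whole interval $(-\infty,\mu_1]\cap[\text{spectrum}]$ — actually on $[0,\mu_1]$ since $\boldsymbol{N}\boldsymbol{N}^\top$ is positive semidefinite with largest eigenvalue $k^2$ and all others $\leq\theta^2$. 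Because $f$ has its largest root at $\mu_1$ and exactly $t-1$ real roots $\mu_1>\cdots>\mu_{t-1}\geq 0$ (a Sturm-sequence/orthogonal-polynomial fact for the eigenvalues of the Jacobi-type matrix $\boldsymbol{B}$), and the leading coefficient is positive with an odd/even count arranged so that $f\leq 0$ on $(-\infty,\mu_1]$ fails in general — so instead one uses $f(x)=(x-\mu_1)g(x)$ with $g(x)=\prod_{i=2}^{t-1}(x-\mu_i)^2/(\text{normalization})$ being a perfect square (this is the standard trick: pair up roots), or rather the known construction where $f$ is $(x-\theta^2)$ times a square of a lower orthogonal polynomial, guaranteeing $f(x)\leq 0$ for $x\leq\theta^2$ and $f(k^2)>0$. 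The main obstacle is getting this construction exactly right so that simultaneously (i) the sign condition $f(\tau_i^2)\le 0$ holds, (ii) all $f_i\ge 0$, and (iii) the resulting bound $2f(k^2)/f_0$ equals precisely $M(k,t,c)$; item (iii) requires computing $f(k^2)$ and $f_0$ in closed form, which is where the identities $k^\epsilon\mathscr{F}_{\epsilon,i}(k^2)=(k-1)^{2i-1+\epsilon}+(k-1)^{2i+\epsilon}$ and the explicit recurrence \eqref{eq:new_3-term} for $\boldsymbol{B}(k,t,c)$ enter, since $f$ is essentially the characteristic-polynomial-derived object whose value at $k^2$ telescopes into the geometric-type sum $\sum_{i=0}^{t-4}(k-1)^i + (k-1)^{t-3}/c + (k-1)^{t-2}/c$.

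Finally, for the equality characterization: equality in $b(k,\theta)\leq M(k,t,c)$ forces equality in every inequality of the LP bound, so $f(\tau_i^2)=0$ for all nontrivial eigenvalues (hence the spectrum of $\Gamma$ is exactly $\{\pm k,\pm\mu_1^{1/2},\ldots\}$ determined by $\boldsymbol{B}$), and $\mathrm{tr}(f_j\mathscr{F}_{0,j}(\boldsymbol{N}\boldsymbol{N}^\top))=0$ for all $j\geq 1$ with $f_j>0$; by the last clause of Theorem~\ref{thm:lp_bound} this gives girth $\geq 2t+2$, i.e.\ $g\geq 2d-2$ with $d=t-1$ (or the appropriate shift when $c=k$, where the matrix degenerates to $\boldsymbol{B}(k,t-1,1)$ because the last row/column pair becomes redundant). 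Conversely, if such a bipartite distance-regular graph exists, its local structure up to distance $d-1$ is a tree and the quotient matrix on the distance partition is $\boldsymbol{B}(k,t,c)$ (or $\boldsymbol{B}(k,t-1,1)$), so counting vertices by the intersection array yields exactly $M(k,t,c)$ vertices and the eigenvalue $\theta$ is realized — one checks the order count $n = 2(1 + (k-1) + \cdots + (k-1)^{t-4} + (k-1)^{t-3}/c + (k-1)^{t-2}/c)$ directly from the column sums of the quotient matrix. I expect the forward direction's bookkeeping (matching $t$ versus $d$, and handling the boundary case $c=k$ and the tiny case $\boldsymbol{B}(k,3,1)$) to require care, but the conceptual core — choosing $f$ and proving $f_i\geq 0$ — is the real engine, and it parallels \cite{CKNV16} closely enough that the analogous lemmas there can be invoked or adapted.
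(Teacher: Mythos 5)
Your proposal follows essentially the same route as the paper: the polynomial is $(x-\theta^2)$ times the square of the cofactor of $(x^2-\theta^2)$ in the reduced characteristic polynomial $(c-1)G_{t-4}+G_{t-2}$ of $\boldsymbol{B}(k,t,c)$, coefficient positivity comes from the Cohn--Kumar positivity result together with Lemma~\ref{lem:coe}, and equality is handled via the girth/diameter count and the $g\geq 2d-2$ distance-regularity characterization. The only wobble is the girth index (the polynomial has degree $t-3$, so the LP theorem gives girth $\geq 2t-4$, not $2t+2$), which is exactly the bookkeeping you flagged as needing care.
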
 

\begin{proof}
%Let $\boldsymbol{B}=boldsymbol{B}(k,t,c)$. 
We first calculate 
the characteristic polynomial of $\boldsymbol{B}(k,t,c)$. 
The polynomials $F_i$, $G_i$, $\mathscr{F}_i$, and $\mathscr{G}_i$  are defined in Section~\ref{sec:pre}. 
Note that $F_i(x)$ is the characteristic polynomial of the principal $i \times i$ matrix formed by the first 
$i$ rows and $i$ columns of $\boldsymbol{B}(k,t,1)$ for $t>i+1$. 
By this fact and equations \eqref{eq:3-term2} and \eqref{eq:G_i}, we can compute 
\begin{align*}
|x \boldsymbol{I}-\boldsymbol{B}(k,t,c)|
&={\small \begin{vmatrix}
x   & -k       &            &           &          & \\
-1 &  x        & -(k-1)  &            &         & \\
    & \ddots  & \ddots & \ddots &         &  \\
    &             &-1       & x         & -(k-1)&    \\
    &             &          & -c       & x        & -(k-c) \\
    &             &           &          & -k      & x     
\end{vmatrix}}\\
&=k {\small \begin{vmatrix}
x   & -k       &            &                   & \\
-1 &  x        & -(k-1)  &                  & \\
    & \ddots  & \ddots & \ddots          &  \\
    &             &-1       & x          &   0 \\
    &             &          & -c          & -(k-c) \\    
\end{vmatrix}}+x{\small \begin{vmatrix}
x   & -k       &            &           &           \\
-1 &  x        & -(k-1)  &            &          \\
    & \ddots  & \ddots & \ddots &          \\
    &             &-1       & x         & -(k-1)    \\
    &             &          & -c       & x            
\end{vmatrix}}\\
&=-k(k-c)F_{t-2}(x)+x(x F_{t-2}(x)-c(k-1)F_{t-3}(x))\\
&= c\big(F_{t-2}(x)-(k-1)^2 F_{t-4}(x)\big)+(x^2-k^2)F_{t-2}(x)\\
&= c\big((x^2-k^2)F_{t-4}(x)+F_{t-4}(x)-(k-1)^2 F_{t-6}(x)\big)+(x^2-k^2)F_{t-2}(x)\\
&= (x^2-k^2)\left(c\sum_{i=0}^{ \lfloor (t-4)/2 \rfloor}F_{t-4-2i}(x)+F_{t-2}(x)\right).
%\\
%&=(x^2-k^2)\big((c-1){G}_{t-4}(x)+{G}_{t-2}(x)\big)
\end{align*}
Note that
\[
c\sum_{i=0}^{ \lfloor (t-4)/2 \rfloor}F_{t-4-2i}(x)+F_{t-2}(x)=(c-1){G}_{t-4}(x)+{G}_{t-2}(x). 
\]
Since the zeros of $\mathscr{G}_{\epsilon,s-1}$ and $\mathscr{G}_{\epsilon,s}$ interlace on $(0,4(k-1))$, each zero of $(c-1)\mathscr{G}_{\epsilon,s-1}+\mathscr{G}_{\epsilon,s}$ is simple and belongs to $(0,4(k-1))$ except for the smallest zero. 
For $c=k$ the smallest zero is equal to $0$ because 
$
(k-1)\mathscr{G}_{\epsilon,s-1}(0)+\mathscr{G}_{\epsilon,s}(0)=0
$ 
by \eqref{eq:new_3-term} and \eqref{eq:def_scrG}. 
For $c>k$, the smallest zero is negative. From $x^{\epsilon}(x^2-k^2)((c-1)\mathscr{G}_{\epsilon,s-1}(x^2)+\mathscr{G}_{\epsilon,s}(x^2))=(x^2-k^2)((c-1){G}_{2s-2+\epsilon}(x)+{G}_{2s+\epsilon}(x))$, 
each non-zero real eigenvalue of 
$\boldsymbol{B}(k,t,c)$ has multiplicity 1, 
and if $c>k$, then $\boldsymbol{B}(k,t,c)$ has 
imaginary eigenvalues.       

Let $\mathfrak{f}_1(x)$ be the polynomial  
\[
\mathfrak{f}_1(x)=\frac{((c-1){G}_{t-4}(x)+G_{t-2}(x))^2}{x^2-\theta^2}=\sum_{i=0}^{ t- 3} f_i \mathscr{F}_{0,i}(x^2).
\]
We show that $\mathfrak{f}_2(x)=\sum_{i=0}^{ t - 3} f_i \mathscr{F}_{0,i}(x)$ satisfies the condition of the linear programming bound from Theorem~\ref{thm:lp_bound} for bipartite graphs. 
Note that $\mathfrak{f}_2(k^2)=\mathfrak{f}_1(k)>0$, and $\mathfrak{f}_2(\lambda^2)=\mathfrak{f}_1(\lambda) \leq 0$ for each $\lambda \in [-\theta, \theta]$. 
It suffices to show that $f_i>0$ for each $i\in \{0,1,\ldots, t- 3\}$. 
 
The polynomial $\mathfrak{f}_1(x)$ can be expressed by 
\begin{align*}
\mathfrak{f}_1(x)&= 
\frac{(c-1){G}_{t-4}(x)+{G}_{t-2}(x)}{x^2-\theta^2}
\left(c\sum_{i=0}^{\lfloor t/2 \rfloor-2}{F}_{t-4-2i}(x)+{F}_{t-2}(x) \right)\\
&=x^{2\epsilon}
\frac{(c-1)\mathscr{G}_{\epsilon,\lfloor t/2 \rfloor-2 }(x^2)+\mathscr{G}_{\epsilon,\lfloor t/2 \rfloor-1 }(x^2)}{x^2-\theta^2}
\left(c\sum_{i=0}^{\lfloor t/2 \rfloor-2}\mathscr{F}_{\epsilon,i}(x^2)+
\mathscr{F}_{\epsilon,\lfloor t/2 \rfloor-1}(x^2)\right), 
\end{align*}
where $\epsilon=0$ if $t$ is even, 
and $\epsilon=1$ if $t$ is odd. Thus, 
\[
\mathfrak{f}_2(x)=x^{\epsilon}
\frac{(c-1)\mathscr{G}_{\epsilon,\lfloor t/2 \rfloor-2 }(x)+\mathscr{G}_{\epsilon,\lfloor t/2 \rfloor-1 }(x)}{x-\theta^2}
\left(c\sum_{i=0}^{\lfloor t/2 \rfloor-2}\mathscr{F}_{\epsilon,i}(x)+
\mathscr{F}_{\epsilon,\lfloor t/2 \rfloor-1}(x)\right).
\]

By Proposition~3.2 in \cite{CK07}, 
$g(x)=((c-1)\mathscr{G}_{\epsilon,\lfloor t/2 \rfloor-2}+\mathscr{G}_{\epsilon,\lfloor t/2 \rfloor-1})/(x-\theta^2)$ has  positive coefficients 
in terms of $\mathscr{G}_{\epsilon,0},\mathscr{G}_{\epsilon,1},\ldots, \mathscr{G}_{\epsilon,\lfloor t/2 \rfloor-2}$. This implies that $g(x)$ has positive 
coefficients in terms of $\mathscr{F}_{\epsilon,0},\mathscr{F}_{\epsilon,1},\ldots, \mathscr{F}_{\epsilon,\lfloor t/2 \rfloor-2}$.
Therefore $f_i>0$ for each $i=\{0,1,\ldots, t-3 \}$ by 
Lemma~\ref{lem:coe}. 

The polynomial $g(x)$ can be expressed by $g(x)=\sum_{i=0}^{\lfloor t/2 \rfloor-2}g_i \mathscr{F}_{\epsilon,i}(x)$.  
By Lemma \ref{lem:coe}, we have
\[
f_0=\sum_{i=0}^{\lfloor t/2 \rfloor-2} c k^{\epsilon} g_i \mathscr{F}_{\epsilon,i}(k^2)=c k^{\epsilon} g(k^2).
\]
By applying Theorem~\ref{thm:lp_bound} to the polynomial $\mathfrak{f}_2(x)$, we have
\begin{align*}
b(k,\theta) &\leq \frac{2\mathfrak{f}_2(k^2)}{f_0}=2k^\epsilon\left( \sum_{i=0}^{\lfloor t/2 \rfloor-2}\mathscr{F}_{\epsilon,i}(k^2)+\mathscr{F}_{\epsilon,\lfloor t/2 \rfloor-1}(k^2)/c \right) \\
&=2\left( \sum_{i=0}^{t-4} (k-1)^{i}+ \frac{(k-1)^{t-3}}{c}+
\frac{(k-1)^{t-2}}{c}
\right).
\end{align*}

By Theorem~\ref{thm:lp_bound}, the bipartite graph attaining the bound $M(k,t,c)$ has girth at least $2t-4$, and at most $t$ distinct eigenvalues.  Since the diameter is at most $t-1$, the graph satisfies $g\geq 2d-2$, where $g$ is the girth and $d$ is the diameter. By $g\geq 2d-2$, the graph becomes  a distance-regular graph \cite[Theorem 4.4]{ADF16}, \cite{TY00}, and it must have the quotient matrix $\boldsymbol{B}(k,t,c)$ for $1\leq c < k$, or $\boldsymbol{B}(k,t-1,1)$ for $c=k$ (see Proposition~\ref{prop:c2} below).  
Conversely the distance-regular graph with the quotient matrix $\boldsymbol{B}(k,t,c)$ clearly attains the bound $M(k,t,c)$. 
\end{proof}
Note that $\Gamma$ is a distance-regular graph with the quotient matrix  $\boldsymbol{B}(k,d+1,c)$ if and only if $\Gamma$ is a connected bipartite $k$-regular graph that has only $d+1$ distinct eigenvalues, and whose girth is at least $2d-2$.   
Table \ref{tab:1} shows the known examples attaining the bound $M(k,d+1,c)$ \cite[Section~6.11]{BCN}. 
%Remark that the complete bipartite graph $K_{k,k}$ gives $b(k,0)=2k$. 
\begin{table}
\caption{Known bipartite graphs meeting the bound $M(k,d+1,c)$}\label{tab:1}
\begin{tabular}{|c|c|c|c|c|c|} 
\hline
$k$& $\theta$ & $b(k,\theta)$& $d$& $c$ &Name \\
\hline 
$2$&$2\cos(2\pi/n)$ & $n$ (even) & $n/2$& 1 & $n$-cycle $C_n$  \\
$k$&$0$ & $2k$& $2$ & $1$ & Complete bipartite graph $K_{k,k}$   \\
$k$& $\sqrt{k-\tau}$ & $2 (1+k(k-1)/\tau )$ & 3& $\tau$ & {\small  Symmetric $(v,k,\tau)$-design }  \\ 
%$q+1$&$\sqrt{q}$ &$2(q^2+q+1) $&3 &1 & Incidence graph of $PG(2,q)$  & \cite{S66,BCN}\\
{\small $r^2-r+1$} &$r$ & {\small $2(r^2+1)\times$} &4 & 
{\small $(r-1)^2$}
& {\small  $pg(r^2-r+1,r^2-r+1,(r-1)^2)$}\\
& & {\small $(r^2-r+1)$} & &  &\\
$q$&$\sqrt{q}$& $2q^2$ &4&$q-1$&$AG(2,q)$ minus a parallel class   \\
$q+1$&$\sqrt{2q}$&$2\sum_{i=0}^3 q^i$ &4&1    & $GQ(q,q)$  \\
$q+1$&$\sqrt{3q}$ & $2\sum_{i=0}^5 q^i$ &6&1&$GH(q,q)$ \\
6 & 2& 162 & 4 & 2 &  $pg(6,6,2)$ \\
\hline
\end{tabular}
$AG(2,q)$: affine plane, $GQ(q,q)$: generalized quadrangle, $GH(q,q)$: generalized hexagon, \\
$pg$: partial geometry, $q$: prime power, $r$: power of 2, \\
We use the bipartite incidence graph of an incidence structure.  
\end{table}

\begin{example} \label{ex:1}
Recall that $v(k, \theta)$ denotes the maximum order of a connected (not necessarily bipartite) $k$-regular graph whose second largest eigenvalue is at most $\theta$. We have $v(3,1)=10$, which is attained by the Petersen graph \cite{CKNV16} and $b(3,1)=8$ from Table 1, which is attained by the bipartite incidence graph of the symmetric $(4,3,2)$-design. 
\end{example}

The following is the bipartite version of Theorem 5 in \cite{N}. 
\begin{corollary}
Let $\Gamma$ be a bipartite distance-regular graph of order $n$ with quotient matrix $\boldsymbol{B}(k,t,c)$ with respect to the distance-partition from a vertex. Then $\lambda_2(\Gamma)\leq \lambda_2(\Gamma')$ for any bipartite $k$-regular graph $\Gamma'$ of order $n$. 
\end{corollary}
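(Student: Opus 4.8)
The plan is to argue by contradiction, transplanting to an arbitrary competitor $\Gamma'$ the very polynomial $\mathfrak{f}_2$ and the equality analysis already used to prove Theorem~\ref{bound}. Put $\theta=\lambda_2(\Gamma)$. Since $\Gamma$ is a bipartite distance-regular graph whose quotient matrix with respect to the distance partition from a vertex is $\boldsymbol{B}(k,t,c)$, the eigenvalues of $\Gamma$ are exactly the eigenvalues of $\boldsymbol{B}(k,t,c)$; hence $\theta$ is the second largest eigenvalue of $\boldsymbol{B}(k,t,c)$, and the equality part of Theorem~\ref{bound} gives $n=M(k,t,c)$. Let $\Gamma'$ be a bipartite $k$-regular graph on $n$ vertices. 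If $\Gamma'$ is disconnected, then $k$ is an eigenvalue of $\Gamma'$ of multiplicity at least two, so $\lambda_2(\Gamma')=k\ge\theta$ and we are done; thus assume $\Gamma'$ is connected. Suppose, for contradiction, that $\lambda_2(\Gamma')<\theta$.

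The main step is to apply the bipartite linear programming bound (Theorem~\ref{thm:lp_bound}) to $\Gamma'$ with the polynomial $\mathfrak{f}_2(x)=\sum_{i=0}^{t-3}f_i\mathscr{F}_{0,i}(x)$ built in the proof of Theorem~\ref{bound}. Recall from that proof that $f_0>0$, that $f_j>0$ for every $j\in\{1,\ldots,t-3\}$, that $\mathfrak{f}_2(k^2)=\mathfrak{f}_1(k)>0$, and that $\mathfrak{f}_1(x)=\big((c-1)G_{t-4}(x)+G_{t-2}(x)\big)^2/(x^2-\theta^2)$. For each nontrivial eigenvalue $\lambda$ of $\Gamma'$ we have $\lambda^2\le\lambda_2(\Gamma')^2<\theta^2$, hence $\mathfrak{f}_2(\lambda^2)=\mathfrak{f}_1(\lambda)\le 0$, since the numerator of $\mathfrak{f}_1$ is a square while its denominator is negative; this is the only place where the hypothesis $\lambda_2(\Gamma')<\theta$ is used. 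Thus $\mathfrak{f}_2$ satisfies all the hypotheses of Theorem~\ref{thm:lp_bound} for $\Gamma'$, and we obtain
\[
n=|V(\Gamma')|\le \frac{2\,\mathfrak{f}_2(k^2)}{f_0}=M(k,t,c)=n ,
\]
so equality holds in the linear programming bound for $\Gamma'$.

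It remains to run the equality analysis from the end of the proof of Theorem~\ref{bound}, now for $\Gamma'$: equality forces the girth of $\Gamma'$ to be at least $2t-4$ and its diameter $d'$ to satisfy $d'\le t-1$, whence $g\ge 2d'-2$. By \cite[Theorem 4.4]{ADF16} and \cite{TY00}, $\Gamma'$ is then a bipartite distance-regular graph, and by Proposition~\ref{prop:c2} its quotient matrix with respect to the distance partition from a vertex is $\boldsymbol{B}(k,t,c)$ when $1\le c<k$ and $\boldsymbol{B}(k,t-1,1)$ when $c=k$. In either case the second largest eigenvalue of the quotient matrix is $\theta$ (for $c=k$ this uses the recurrence $G_{t-2}(x)=xG_{t-3}(x)-(k-1)G_{t-4}(x)$, which shows $\boldsymbol{B}(k,t,k)$ and $\boldsymbol{B}(k,t-1,1)$ have the same nonzero eigenvalues), so $\lambda_2(\Gamma')=\theta$, contradicting $\lambda_2(\Gamma')<\theta$. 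Therefore $\lambda_2(\Gamma)=\theta\le\lambda_2(\Gamma')$, as claimed.

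I expect the only slightly delicate points to be: checking that every hypothesis of Theorem~\ref{thm:lp_bound} genuinely transfers to $\Gamma'$, in particular that the sign condition $\mathfrak{f}_1(\lambda)\le 0$ holds at all nontrivial eigenvalues (which needs exactly $|\lambda|<\theta$, hence the assumption on $\lambda_2(\Gamma')$); and, in the boundary case $c=k$, verifying that the alternative quotient matrix $\boldsymbol{B}(k,t-1,1)$ still has $\theta$ as its second largest eigenvalue. Everything else is a direct invocation of results established earlier in the paper.
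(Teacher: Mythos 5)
Your proof is correct and takes essentially the same route as the paper's own (three-sentence) proof: assume $\lambda_2(\Gamma')<\theta$, observe that $\Gamma'$ then attains the bound of Theorem~\ref{bound}, and derive the contradiction that $\theta$ must be an eigenvalue of $\Gamma'$. Your write-up simply makes explicit the details the paper leaves implicit --- re-running Theorem~\ref{thm:lp_bound} with the polynomial $\mathfrak{f}_2$, the equality analysis forcing $\Gamma'$ to be a distance-regular graph with quotient matrix $\boldsymbol{B}(k,t,c)$ (or $\boldsymbol{B}(k,t-1,1)$), and the disconnected case --- all of which are handled correctly.
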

\begin{proof}
Assume that there exists a graph $\Gamma'$ of order $n$ such that $\lambda_2(\Gamma')<\lambda_2(\Gamma)$. Then $\Gamma'$ also attains the bound from Theorem \ref{bound}. This implies that $\Gamma'$ must have the eigenvalue $\lambda_2(\Gamma)$, which is a contradiction.  
\end{proof}

Let $\mu^{(j)}$ ({\it resp}. $\lambda^{(j)}$) denote the largest zero of $F_j(x)$ ({\it resp}. $G_j(x)$). 

\begin{proposition}
For each $\theta\in [0,2\sqrt{k-1})$, there exist $t,c$ such that $\theta$ is the second largest eigenvalue of $\boldsymbol{B}(k,t,c)$.
\end{proposition}
\begin{proof}
%We discuss a possible second eigenvalue $\lambda_2(t,c)$ of $\boldsymbol{B}(k,t,c)$. 

Note that $\lambda^{(j)}<\mu^{(j)}$ for $j\geq 1$ because 
$G_j(x)=\sum_{i=0}^{\lfloor j/2 \rfloor}F_{j-2i}(x) > 0$ for  $x \geq \mu^{(j)}$.
The second eigenvalue $\lambda_2(t,c)$ of  $\boldsymbol{B}(k,t,c)$ is equal to the largest zero of 
$(c-1)G_{t-4}(x)+G_{t-2}(x)$. 
 Since the zeros of $\mathscr{G}_{\epsilon,\floor{t/2}-2}$ and $\mathscr{G}_{\epsilon, \floor{t/2}-1}$  interlace,  $\lambda_2(t,c)$ is a monotonically decreasing function in $c$. In particular, $\lim_{c\rightarrow \infty}\lambda_2(t,c)=\lambda^{(t-4)}$ with $t\geq 5$, $\lambda_2(t,1)=\lambda^{(t-2)}$, and 
$\lim_{c\rightarrow 0} \lambda_2(t,c)=\mu^{(t-2)}$. 
The largest zero $r^{(j)}$ of $G_j(x)+G_{j-1}(x)$ can be expressed by $r^{(j)}=2\sqrt{k-1} \cos \alpha$, where $\pi/(j+1) < \alpha < \pi/j$ \cite[Section III.3]{BIb}.  
For $\lambda^{(j)}=2\sqrt{k-1} \cos \beta$, 
it follows from $r^{(j)}<\lambda^{(j)}$ that $\beta < \alpha < \pi/j$. This implies that
the possible value $\lambda_2(t,c)$ is between $\lim_{c\rightarrow k}\lambda_2(4,c)=0$ and $\lim_{t\rightarrow \infty} \lambda_2(t,1)=2\sqrt{k-1}$.
Therefore for each $\theta \in \mathopen[0 ,2\sqrt{k-1}\mathclose)$, there exist $t$, $c$ such that $\lambda$ is 
the second eigenvalue of $\boldsymbol{B}(k,t,c)$. 
\end{proof}

Note that for $\theta \in ( \lambda^{(t-2)}, \mu^{(t-2)}]$, $\theta$ is the second eigenvalue of both $\boldsymbol{B}(k,t,c_1)$ and $\boldsymbol{B}(k,t+2,c_2)$ for some $c_1$, $c_2$ with $0\leq c_1 < 1$, $c_2 > 0$. By the following proposition, we may assume $c\geq 1$ in Theorem~\ref{bound} to obtain better bounds. 

\begin{proposition} \label{prop:c} 
Let $\theta\in \mathopen(\lambda^{(t-2)}, \mu^{(t-2)} \mathclose]$. 
Suppose $c_1$ and $c_2$ satisfy that  $0\leq  c_1 < 1$, $c_2 > 0$ and the second largest eigenvalues of $\boldsymbol{B}(k,t,c_1)$ and $\boldsymbol{B}(k,t+2,c_2)$ are $\theta$. 
Then we have $M(k,t,c_1)>M(k,t+2,c_2)$. 
\end{proposition}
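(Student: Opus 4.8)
The plan is to reduce the inequality $M(k,t,c_1)>M(k,t+2,c_2)$ to an explicit inequality among the values $G_{t-4}(\theta),G_{t-2}(\theta),G_{t}(\theta)$ and then settle it with the identities for the polynomials $F_j,G_j$ recorded in Section~\ref{sec:pre}. First I would simplify the difference directly from the definition of $M$: writing $S:=(k-1)^{t-3}+(k-1)^{t-2}=k(k-1)^{t-3}$, one has $(k-1)^{t-1}+(k-1)^{t}=(k-1)^2S$ and $\sum_{i=0}^{t-4}(k-1)^i-\sum_{i=0}^{t-2}(k-1)^i=-S$, so
\[
M(k,t,c_1)-M(k,t+2,c_2)=2S\left(\frac{1}{c_1}-1-\frac{(k-1)^2}{c_2}\right).
\]
As $S>0$, it suffices to prove $\frac{1}{c_1}-1>\frac{(k-1)^2}{c_2}$. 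The boundary case $c_1=0$, which forces $\theta=\mu^{(t-2)}$, is trivial since then $M(k,t,c_1)=\infty>M(k,t+2,c_2)$, so I assume $0<c_1<1$ from here on.

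Next I would express $c_1$ and $c_2$ through the $G$-polynomials. By the characteristic-polynomial computation in the proof of Theorem~\ref{bound}, $\lvert xI-\boldsymbol{B}(k,s,c)\rvert=(x^2-k^2)\big((c-1)G_{s-4}(x)+G_{s-2}(x)\big)$; since $\theta$ is a second-largest eigenvalue of $\boldsymbol{B}(k,t,c_1)$ and of $\boldsymbol{B}(k,t+2,c_2)$ we have $0<\theta<k$, hence $\theta$ is a zero of $(c_1-1)G_{t-4}+G_{t-2}$ and of $(c_2-1)G_{t-2}+G_{t}$. Because $\theta>\lambda^{(t-2)}$ exceeds the largest zero of each of $G_{t-4}$ and $G_{t-2}$, we get $G_{t-4}(\theta),G_{t-2}(\theta)>0$, whence
\[
c_1=1-\frac{G_{t-2}(\theta)}{G_{t-4}(\theta)},\qquad c_2=1-\frac{G_{t}(\theta)}{G_{t-2}(\theta)},
\]
and in particular $G_{t-4}(\theta)-G_{t-2}(\theta)=c_1G_{t-4}(\theta)>0$ and $G_{t-2}(\theta)-G_{t}(\theta)=c_2G_{t-2}(\theta)>0$. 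Substituting these expressions into $\frac{1}{c_1}-1>\frac{(k-1)^2}{c_2}$ and clearing the positive denominators, the desired inequality becomes equivalent to
\[
G_{t-2}(\theta)-G_{t}(\theta)>(k-1)^2\big(G_{t-4}(\theta)-G_{t-2}(\theta)\big).
\]

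Finally I would evaluate both sides with $F_j,G_j$. From $G_i=\sum_{j=0}^{\lfloor i/2\rfloor}F_{i-2j}$ one reads off $G_{j-2}-G_{j}=-F_{j}$, so the inequality above reads $(k-1)^2F_{t-2}(\theta)-F_{t}(\theta)>0$; and \eqref{eq:G_i} with $i=t-2$ gives $(k-1)^2F_{t-2}(x)-F_{t}(x)=(k^2-x^2)G_{t-2}(x)$. Hence the left-hand side equals $(k^2-\theta^2)G_{t-2}(\theta)$, which is strictly positive since $G_{t-2}(\theta)>0$ and $k^2-\theta^2>0$ (because $\theta\le\mu^{(t-2)}<2\sqrt{k-1}\le k$). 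That finishes the argument. I do not expect a genuine obstacle here: the algebra in the first step is routine bookkeeping with geometric sums, and the last step collapses cleanly through \eqref{eq:G_i}. The one place that needs care is justifying the closed forms for $c_1$ and $c_2$ in the second step --- namely that $\theta$ really is a zero of the relevant factor of the characteristic polynomial (which requires $\theta\ne\pm k$, true because $\theta$ is a second-largest eigenvalue while the Perron value of $\boldsymbol{B}(k,s,c)$ is $k$) and that $G_{t-4}(\theta),G_{t-2}(\theta)$ do not vanish --- and both facts are supplied precisely by the hypothesis $\theta\in(\lambda^{(t-2)},\mu^{(t-2)}]$.
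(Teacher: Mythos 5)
Your proof is correct and follows essentially the same route as the paper's: both extract the common factor $2k(k-1)^{t-3}$ (the paper writes $2k(k-1)^{t-2}$, an immaterial slip), express $c_1$ and $c_2$ via $(c_1-1)G_{t-4}(\theta)+G_{t-2}(\theta)=0$ and $(c_2-1)G_{t-2}(\theta)+G_{t}(\theta)=0$, and reduce everything to the positivity of $(k-1)^2F_{t-2}(\theta)-F_{t}(\theta)=(k^2-\theta^2)G_{t-2}(\theta)$ via \eqref{eq:G_i}. Your explicit treatment of the degenerate endpoint $c_1=0$ (where $M(k,t,c_1)$ is infinite) is a small point the paper leaves implicit.
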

\begin{proof}
Since $(c_1-1)G_{t-4}(\theta)+G_{t-2}(\theta)=0$ holds, we have
 \[
c_1=-\frac{G_{t-2}(\theta)-G_{t-4}(\theta)}{G_{t-4}(\theta)}=-\frac{F_{t-2}(\theta)}{G_{t-4}(\theta)}. 
\]
Similarly $c_2=-F_{t}(\theta)/G_{t-2}(\theta)$ holds. 
By $\theta > \lambda^{(t-2)}$, we have  $F_{t-2}(\theta)=-c_1 G_{t-4}(\theta)<0$ and $F_{t}(\theta)=-c_2G_{t-2}(\theta)<0$. 
It therefore follows that 
\begin{align*}
M(k,t,c_1)-M(k,t+2,c_2) 
& =2k(k-1)^{t-2}\big(\frac{1}{c_1}-1-\frac{1}{c_2}(k-1)^2\big)\\
&= 2k(k-1)^{t-2}\big(-\frac{G_{t-4}(\theta)}{F_{t-2}(\theta)}-1+(k-1)^2\frac{G_{t-2}(\theta)}{F_{t}(\theta)}\big)\\
&= 2k(k-1)^{t-2}\big(-\frac{G_{t-2}(\theta)}{F_{t-2}(\theta)}+(k-1)^2\frac{G_{t-2}(\theta)}{F_{t}(\theta)}\big)\\
&=\frac{2k(k-1)^{t-2}G_{t-2}(\theta)}{F_{t-2}(\theta)F_{t}(\theta)}\big(-F_{t}(\theta)+(k-1)^2F_{t-2}(\theta) \big)\\
&=\frac{2k(k-1)^{t-2}(k^2-\theta^2)G_{t-2}(\theta)^2}{F_{t-2}(\theta)F_{t}(\theta)}>0. \qquad \qedhere
\end{align*}
\end{proof}
For $\theta \in ( \lambda^{(t-4)}, \lambda^{(t-3)}]$, $\theta$ is the second eigenvalue of both $\boldsymbol{B}(k,t,c_1)$ and $\boldsymbol{B}(k,t-1,c_2)$ for some $c_1$, $c_2$ with $c_1 \geq 1$, $c_2 \geq 1$. It follows that 
\[
\frac{1}{c_1}=-\frac{G_{t-4}(\theta)}{F_{t-2}(\theta)}= -\frac{G_{t-2}(\theta)}{F_{t-2}(\theta)}+1
=-\frac{\lambda G_{t-3}(\theta)-(k-1)G_{t-4}(\theta)}{F_{t-2}(\theta)}+1
=-\frac{\lambda G_{t-3}(\theta)}{F_{t-2}(\theta)}-\frac{k-1}{c_1}+1,
\]
and hence
\begin{equation} 
\label{eq:k/c}
\frac{k}{c_1}=-\frac{\lambda G_{t-3}(\theta)}{F_{t-2}(\theta)}+1
\end{equation}
for $t\geq 4$. Thus, 
if $\theta=\lambda^{(t-3)}$, then $c_1=k$.    This implies that $k \leq  c_1$. 
By the following proposition, we may assume $1 \leq c < k$ in Theorem~\ref{bound} to obtain better bounds. 

\begin{proposition} \label{prop:c2}
Let $\theta\in \mathopen(\lambda^{(t-4)}, \lambda^{(t-3)} \mathclose]$. 
Suppose $c_1$ and $c_2$ satisfy that  $k\leq  c_1$, $c_2 \geq 1$ and the second largest eigenvalues of $\boldsymbol{B}(k,t,c_1)$ and $\boldsymbol{B}(k,t-1,c_2)$ are $\theta$.  
Then we have $M(k,t,c_1)\geq M(k,t-1,c_2)$. Moreover, equality holds if and only if 
$c_1=k$ and $c_2=1$.   
\end{proposition}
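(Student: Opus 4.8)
The plan is to mirror the computation in the proof of Proposition~\ref{prop:c}, comparing the two bounds by expressing the relevant multiplicities $1/c_1$ and $1/c_2$ in terms of the orthogonal polynomials $F_j$ and $G_j$ evaluated at $\theta$. From the eigenvalue conditions we have $(c_1-1)G_{t-4}(\theta)+G_{t-2}(\theta)=0$ and $(c_2-1)G_{t-5}(\theta)+G_{t-3}(\theta)=0$, which give $c_1=-F_{t-2}(\theta)/G_{t-4}(\theta)$ and $c_2=-F_{t-3}(\theta)/G_{t-5}(\theta)$ (using $G_{t-2}-G_{t-4}=F_{t-2}$ as in the earlier proof). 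Writing out $M(k,t,c_1)$ and $M(k,t-1,c_2)$ and using $M(k,t,c)=2\bigl(\sum_{i=0}^{t-4}(k-1)^i+(k-1)^{t-3}/c+(k-1)^{t-2}/c\bigr)$, the difference $M(k,t,c_1)-M(k,t-1,c_2)$ collapses, after pulling out the common factor $2(k-1)^{t-4}$ and cancelling the geometric-sum terms, to an expression of the form
\[
2(k-1)^{t-4}\left(k\cdot\frac{k-1}{c_1}-\frac{k-1}{c_2}\right)
= 2(k-1)^{t-3}\left(\frac{k}{c_1}-\frac{1}{c_2}\right),
\]
so it suffices to prove $k/c_1\ge 1/c_2$, with equality iff $c_1=k$, $c_2=1$.

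Next I would substitute the explicit formula \eqref{eq:k/c}, namely $k/c_1 = 1-\theta G_{t-3}(\theta)/F_{t-2}(\theta)$, and the analogous identity for the $(t-1)$-matrix, $1/c_2 = -G_{t-3}(\theta)/F_{t-3}(\theta)$ (directly from $c_2=-F_{t-3}(\theta)/G_{t-5}(\theta)$ together with $G_{t-3}=G_{t-5}+F_{t-3}$, or by the same manipulation that produced \eqref{eq:k/c} one index lower). Then
\[
\frac{k}{c_1}-\frac{1}{c_2}
= 1-\frac{\theta G_{t-3}(\theta)}{F_{t-2}(\theta)}+\frac{G_{t-3}(\theta)}{F_{t-3}(\theta)}
= 1+G_{t-3}(\theta)\cdot\frac{F_{t-2}(\theta)-\theta F_{t-3}(\theta)}{F_{t-2}(\theta)F_{t-3}(\theta)}.
\]
By the three-term recurrence \eqref{eq:3-term}, $F_{t-2}(\theta)-\theta F_{t-3}(\theta) = -(k-1)F_{t-4}(\theta)$, so this becomes $1-(k-1)G_{t-3}(\theta)F_{t-4}(\theta)/(F_{t-2}(\theta)F_{t-3}(\theta))$. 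Now I would control the signs: since $\theta\in(\lambda^{(t-4)},\lambda^{(t-3)}]$ lies above the largest zero of $G_{t-4}$ and $G_{t-3}$ but the interlacing of the zeros of consecutive $F_j$'s and $G_j$'s (established via the $\mathscr{F}_{\epsilon,\cdot}$, $\mathscr{G}_{\epsilon,\cdot}$ families in Section~\ref{sec:pre}, cf. the inequality $\lambda^{(j)}<\mu^{(j)}$ and $r^{(j)}<\lambda^{(j)}$ from the preceding proof) pins down the signs of $F_{t-2}(\theta)$, $F_{t-3}(\theta)$, $F_{t-4}(\theta)$ and $G_{t-3}(\theta)$ on this interval: in particular $G_{t-3}(\theta)>0$, $F_{t-2}(\theta)<0$ (as noted in the text preceding the proposition, since $\theta>\lambda^{(t-4)}$ forces $c_1$ finite and the relevant sign), while $F_{t-3}(\theta)$ and $F_{t-4}(\theta)$ retain the sign they have just above $\lambda^{(t-4)}$. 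Tracking these through shows $-(k-1)G_{t-3}(\theta)F_{t-4}(\theta)/(F_{t-2}(\theta)F_{t-3}(\theta))\ge 0$, hence $k/c_1-1/c_2\ge 1-\text{(nonpositive)}$... more carefully, $k/c_1 - 1/c_2 \ge 1 + (\text{nonnegative})>0$ strictly unless the added term vanishes. Equality in $k/c_1=1/c_2+$(nonneg) together with the desired $M$-equality forces $k/c_1=1$, i.e. $c_1=k$ by \eqref{eq:k/c}, which happens exactly when $\theta=\lambda^{(t-3)}$ (the endpoint), and then $c_2=1$ follows since at $\theta=\lambda^{(t-3)}$ we have $G_{t-3}(\theta)=0$... wait, that would kill $G_{t-3}$; more likely at the endpoint $c_2$ is read off directly from $c_2=-F_{t-3}(\lambda^{(t-3)})/G_{t-5}(\lambda^{(t-3)})$, and a short check gives $c_2=1$, i.e. the $(t-1)$-matrix degenerates to the Moore-type extremal configuration.

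The main obstacle I anticipate is the bookkeeping of signs of $F_{t-2}(\theta),F_{t-3}(\theta),F_{t-4}(\theta),G_{t-3}(\theta)$ on the half-open interval $(\lambda^{(t-4)},\lambda^{(t-3)}]$, and in particular handling the boundary case $\theta=\lambda^{(t-3)}$ cleanly (where $c_1=k$ by \eqref{eq:k/c}); the interior inequality should then follow from the one clean algebraic identity $F_{t-2}-\theta F_{t-3}=-(k-1)F_{t-4}$ plus the orthogonal-polynomial sign pattern, exactly as the factor $k^2-\theta^2>0$ did the work at the end of Proposition~\ref{prop:c}. I would double-check the sign assertions against the already-established facts ``$F_{t-2}(\theta)=-c_1G_{t-4}(\theta)<0$'' and ``$F_t(\theta)=-c_2G_{t-2}(\theta)<0$'' from the proof of Proposition~\ref{prop:c}, suitably index-shifted, to avoid an off-by-one error in which $F_j$ is negative where.
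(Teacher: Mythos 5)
Your overall strategy (expressing $1/c_1$ and $1/c_2$ through $F_j(\theta)$ and $G_j(\theta)$, invoking \eqref{eq:k/c} and the three-term recurrence, and finishing with a sign analysis) is the same as the paper's, but your opening algebraic reduction is wrong and it sinks the rest of the argument. Writing out the two bounds, the geometric sums differ by the single term $(k-1)^{t-4}$, so
\[
\frac{M(k,t,c_1)-M(k,t-1,c_2)}{2(k-1)^{t-4}}
=1+\frac{k(k-1)}{c_1}-\frac{k}{c_2}
=1-\frac{1}{c_2}+(k-1)\Bigl(\frac{k}{c_1}-\frac{1}{c_2}\Bigr),
\]
not $(k-1)\bigl(k/c_1-1/c_2\bigr)$: you dropped both the leftover $1$ from the sums and one of the two $c_2$-terms. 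The discarded piece $1-1/c_2$ is not harmless; it is what actually carries the inequality. Indeed, your proposed sufficient condition $k/c_1\ge 1/c_2$ is false in the interior of the interval. For $k=3$, $t=6$ one has $c_1=-F_4(\theta)/G_2(\theta)=-(\theta^4-7\theta^2+6)/(\theta^2-2)$ and $c_2=5-\theta^2$, with $\theta\in(\lambda^{(2)},\lambda^{(3)}]=(\sqrt2,2]$. At $\theta^2=5/2$ this gives $c_1=21/2\ge k$ and $c_2=5/2\ge 1$, so $k/c_1=2/7<2/5=1/c_2$, yet the true normalized difference is $1-2/5+2(2/7-2/5)=13/35>0$ because $1-1/c_2$ dominates. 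Moreover at $\theta^2=3$ one gets $c_1=6$, $c_2=2$ and $k/c_1=1/c_2=1/2$, so your equality criterion ``$k/c_1=1/c_2$ iff $c_1=k$, $c_2=1$'' also fails.

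A second, compounding slip: from $c_2=-F_{t-3}(\theta)/G_{t-5}(\theta)$ and $G_{t-3}=G_{t-5}+F_{t-3}$ one gets $-G_{t-3}(\theta)/F_{t-3}(\theta)=1/c_2-1$, not $1/c_2$. (This error partially offsets the first, so the rational expression you end up bounding is $k/c_1+1-1/c_2$, which is still not the quantity $1-1/c_2+(k-1)(k/c_1-1/c_2)$ that you need.) The paper keeps all the terms, substitutes \eqref{eq:k/c} together with $1/c_2=-G_{t-5}(\theta)/F_{t-3}(\theta)$, and uses the recurrence plus \eqref{eq:G_i} to collapse the whole normalized difference to
\[
(\theta^2-k^2)\,\frac{G_{t-3}(\theta)\,G_{t-4}(\theta)}{F_{t-3}(\theta)\,F_{t-2}(\theta)}\ \ge\ 0,
\]
whose sign is immediate from $\theta^2<k^2$, $G_{t-4}(\theta)>0$, $G_{t-3}(\theta)\le 0$, $F_{t-2}(\theta)=-c_1G_{t-4}(\theta)<0$ and $F_{t-3}(\theta)=-c_2G_{t-5}(\theta)<0$; equality forces $G_{t-3}(\theta)=0$, i.e.\ $\theta=\lambda^{(t-3)}$, whence $c_1=k$ by \eqref{eq:k/c} and $c_2=1-G_{t-3}(\theta)/G_{t-5}(\theta)=1$ (your worry about ``killing $G_{t-3}$'' at the endpoint is unfounded: its vanishing is exactly the equality case). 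To repair your write-up you must restart from the correct expression for the difference; the sign bookkeeping you planned is then essentially the right finish.
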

\begin{proof}
From 
%$c_1=F_{t-2}/G_{t-4}$, 
$c_2=-F_{t-3}(\theta)/G_{t-5}(\theta)$ and $\eqref{eq:k/c}$, 
we have
\begin{align*}
\frac{M(k,t,c_1)- M(k,t-1,c_2)}{2(k-1)^{t-4}}&=
 1+\frac{k-1}{c_1}+\frac{(k-1)^2}{c_1}-\frac{1}{c_2}-\frac{k-1}{c_2}\\
&=1-\frac{1}{c_2}+(k-1)\left(
\frac{k}{c_1}-\frac{1}{c_2} \right)\\
&=1+\frac{G_{t-5}(\theta)}{F_{t-3}(\theta)}+(k-1)\left( 
-\frac{xG_{t-3}(\theta)}{F_{t-2}(\theta)}+1 +\frac{G_{t-5}(\theta)}{F_{t-3}(\theta)} \right)\\
&=\frac{G_{t-3}(\theta)}{F_{t-3}(\theta)}-(k-1)^2 \frac{F_{t-4}(\theta)G_{t-3}(\theta)}{F_{t-2}(\theta)F_{t-3}(\theta)}\\
&=\frac{G_{t-3}(\theta)}{F_{t-3}(\theta)F_{t-2}(\theta)}
(F_{t-2}(\theta)-(k-1)^2F_{t-4}(\theta))\\
&= (x^2-k^2)\frac{G_{t-3}(\theta)G_{t-4}(\theta)}{F_{t-3}(\theta)F_{t-2}(\theta)}\geq 0. 
\end{align*}
This implies the proposition. 
\end{proof}
The above results imply the following theorem. 
\begin{theorem} \label{thm:lambda} 
Let $\lambda^{(j)}$ be the largest zero of $G_j(x)$ for $j\geq 1$. Then $\bigcup_{j= 1 }^\infty (\lambda^{(j)},\lambda^{(j+1)}]= (0,2 \sqrt{k-1})$. If $t\geq 4$ satisfies $\lambda^{(t-3)} < \theta \leq \lambda^{(t-2)}$, then 
\[
b(k,\theta) \leq M(k,t,c)=2\left(\sum_{i=0}^{t-4} (k-1)^{i}+ \frac{(k-1)^{t-3}}{c}+\frac{(k-1)^{t-2}}{c}\right), 
\]  
where $c=-F_{t-2}(\theta)/G_{t-4}(\theta)$. 
\end{theorem}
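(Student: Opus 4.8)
The plan is to deduce the theorem from Theorem~\ref{bound} together with the boundary computations and the monotonicity recorded just before it; essentially all one has to do is pin $\theta$ down to the unique matrix $\boldsymbol{B}(k,t,c)$ that gives the sharpest bound, and then quote Theorem~\ref{bound}.

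First I would settle the partition claim. Since $G_0=1$, $G_1(x)=x$ and $G_i(x)=xG_{i-1}(x)-(k-1)G_{i-2}(x)$, the substitution $x=2\sqrt{k-1}\cos\phi$ gives $G_i\bigl(2\sqrt{k-1}\cos\phi\bigr)=(k-1)^{i/2}\sin\bigl((i+1)\phi\bigr)/\sin\phi$, hence $\lambda^{(i)}=2\sqrt{k-1}\cos\bigl(\pi/(i+1)\bigr)$. Thus $\lambda^{(1)}=0$, the sequence $\lambda^{(1)}<\lambda^{(2)}<\cdots$ is strictly increasing, and $\lambda^{(j)}\to 2\sqrt{k-1}$. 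The half-open intervals $(\lambda^{(j)},\lambda^{(j+1)}]$ are then pairwise disjoint with union $(\lambda^{(1)},2\sqrt{k-1})=(0,2\sqrt{k-1})$, and for every $\theta$ in this range there is exactly one $t\geq 4$ with $\lambda^{(t-3)}<\theta\leq\lambda^{(t-2)}$.

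Next I would fix such a $t$ and $\theta$. By the characteristic polynomial computed in the proof of Theorem~\ref{bound}, $|xI-\boldsymbol{B}(k,t,c)|=(x^2-k^2)\bigl((c-1)G_{t-4}(x)+G_{t-2}(x)\bigr)$; for $1\le c\le k$ no imaginary eigenvalues occur and every zero of the second factor lies in $(-2\sqrt{k-1},2\sqrt{k-1})\subseteq(-k,k)$, so $\lambda_2(\boldsymbol{B}(k,t,c))$ is the largest zero of $(c-1)G_{t-4}(x)+G_{t-2}(x)$. Using the recurrence one checks that this factor is $G_{t-2}$ at $c=1$ and $xG_{t-3}$ at $c=k$, so $\lambda_2(\boldsymbol{B}(k,t,1))=\lambda^{(t-2)}$ and $\lambda_2(\boldsymbol{B}(k,t,k))=\lambda^{(t-3)}$; since $\lambda_2(\boldsymbol{B}(k,t,c))$ is continuous and monotone in $c$, the intermediate value theorem produces some $c\in[1,k)$ with $\lambda_2(\boldsymbol{B}(k,t,c))=\theta$. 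Any such $c$ is a root in the variable $c$ of $(c-1)G_{t-4}(\theta)+G_{t-2}(\theta)$, a linear equation whose leading coefficient $G_{t-4}(\theta)$ is nonzero (trivially if $t=4$; and because $G_{t-4}$ is monic with largest zero $\lambda^{(t-4)}<\theta$ if $t\geq 5$), so $c$ is uniquely determined; using $G_{t-2}(x)=F_{t-2}(x)+G_{t-4}(x)$ (immediate from $G_i(x)=\sum_jF_{i-2j}(x)$) it equals $-F_{t-2}(\theta)/G_{t-4}(\theta)$, the value in the statement. As $1\le c<k$ and $\theta=\lambda_2(\boldsymbol{B}(k,t,c))$, Theorem~\ref{bound} then yields $b(k,\theta)\le M(k,t,c)$.

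The main obstacle will be the bookkeeping that pins $\theta$ to a single admissible pair $(t,c)$: establishing the two boundary values $\lambda_2(\boldsymbol{B}(k,t,1))=\lambda^{(t-2)}$ and $\lambda_2(\boldsymbol{B}(k,t,k))=\lambda^{(t-3)}$ and the monotonicity of $c\mapsto\lambda_2(\boldsymbol{B}(k,t,c))$, and verifying that the resulting $c$ really lands in $[1,k)$; everything past that is either already inside Theorem~\ref{bound} or a one-line manipulation of the recurrences for $F_i$ and $G_i$. I would also add the remark (which is exactly Propositions~\ref{prop:c} and~\ref{prop:c2}) that this $(t,c)$ is optimal, so confining attention to $c\in[1,k)$ costs nothing: representing $\theta$ instead as the second eigenvalue of some $\boldsymbol{B}(k,t',c')$ with $c'\notin[1,k)$ never gives a smaller bound.
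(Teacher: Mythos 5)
Your proposal is correct and follows essentially the same route the paper takes: the paper derives Theorem~\ref{thm:lambda} by combining Theorem~\ref{bound} with the preceding proposition on the existence and monotonicity of $c\mapsto\lambda_2(\boldsymbol{B}(k,t,c))$ and with Propositions~\ref{prop:c} and~\ref{prop:c2}, which is exactly the chain of reductions you spell out (your explicit Chebyshev computation of $\lambda^{(j)}$ and the boundary evaluations at $c=1$ and $c=k$ just make precise what the paper leaves implicit). No gaps.
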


The following results in \cite{HJ12, HJ14} are obtained as corollaries of Theorem~\ref{thm:lambda}. 
\begin{corollary}[{\cite{HJ12}}] \label{coro:0}
Let $\Gamma$ be a bipartite $n$-regular graph with $2m$ nodes.  
If 
$
\lambda_2(\Gamma) \leq \sqrt{n-1}
$,
then
\[
m \leq 1+\frac{n(n-1)}{n-\lambda_2^2(\Gamma)}  
\]
or, equivalently, 
\[
\lambda_2^2(\Gamma) \geq \sqrt{\frac{mn-n^2}{m-1}}. 
\]
\end{corollary}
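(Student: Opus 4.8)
The plan is to deduce the inequality directly from Theorem~\ref{thm:lambda} by specializing to its smallest admissible index, $t=4$. First I would pin down the two relevant largest zeros of the $G_j$. Since $G_1(x)=F_1(x)=x$ and $G_2(x)=F_2(x)+F_0(x)=x^2-(k-1)$, we have $\lambda^{(1)}=0$ and $\lambda^{(2)}=\sqrt{k-1}$. Writing $k=n$ and $\theta=\lambda_2(\Gamma)$, the hypothesis $\lambda_2(\Gamma)\le\sqrt{n-1}$ says precisely that $\theta\in(\lambda^{(1)},\lambda^{(2)}]$ in the case $\theta>0$, so Theorem~\ref{thm:lambda} applies with $t=4$ and yields $b(n,\theta)\le M(n,4,c)$, where $c=-F_{t-2}(\theta)/G_{t-4}(\theta)=-F_2(\theta)/G_0(\theta)$.

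Next I would compute the two constants. From $F_2(x)=x^2-k$ and $G_0(x)=F_0(x)=1$ we get $c=k-\theta^2=n-\theta^2$, and this is $\ge 1>0$ exactly because $\theta^2\le n-1$ --- this is the one place the hypothesis is used. Substituting $t=4$ into $M(k,t,c)=2\big(\sum_{i=0}^{t-4}(k-1)^i+(k-1)^{t-3}/c+(k-1)^{t-2}/c\big)$ collapses the sum to the single term $i=0$ and gives
\[
M(n,4,n-\theta^2)=2\Big(1+\frac{(n-1)+(n-1)^2}{n-\theta^2}\Big)=2\Big(1+\frac{n(n-1)}{n-\theta^2}\Big).
\]
Since a disconnected $n$-regular graph has $\lambda_2=n>\sqrt{n-1}$, our $\Gamma$ is connected; it is bipartite, $n$-regular, on $2m$ vertices, with second eigenvalue $\theta$, so $2m\le b(n,\theta)\le M(n,4,n-\theta^2)$, that is, $m\le 1+n(n-1)/(n-\theta^2)$. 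The boundary case $\theta=0$ (where $c=n$, $\boldsymbol{B}(n,4,n)$ degenerates to $\boldsymbol{B}(n,3,1)$, and the only such connected graph is $K_{n,n}$ with $m=n$) meets the bound with equality and is checked directly. Finally, the equivalent form $\lambda_2^2(\Gamma)\ge(mn-n^2)/(m-1)$ follows by multiplying through by $n-\theta^2>0$ and rearranging.

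There is no serious obstacle: the statement is a one-parameter specialization of Theorem~\ref{thm:lambda}. The only points needing a little attention are (i) correctly identifying $t=4$ via $\lambda^{(1)}=0$ and $\lambda^{(2)}=\sqrt{k-1}$, (ii) verifying $c=n-\theta^2>0$ so that the bound of Theorem~\ref{thm:lambda} is legitimate, and (iii) the degenerate endpoints $\theta=0$ and $\theta=\sqrt{n-1}$, corresponding to $c=n$ and $c=1$, which are handled by hand.
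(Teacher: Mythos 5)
Your proposal is correct and follows essentially the same route as the paper: specialize Theorem~\ref{thm:lambda} to $t=4$, compute $\lambda^{(1)}=0$, $\lambda^{(2)}=\sqrt{n-1}$, $c=-F_2(\theta)/G_0(\theta)=n-\theta^2$, and simplify $M(n,4,c)$. The extra checks you add (connectivity, the $\theta=0$ endpoint) are fine but not needed beyond what the paper's one-line argument already covers; note also that your rearranged form $\lambda_2^2\ge (mn-n^2)/(m-1)$ is the correct one, the square root in the stated "equivalent" inequality being a typo.
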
	
\begin{proof}
This is immediate by Theorem \ref{thm:lambda} for $t=4$. Indeed, 
since $\lambda^{(1)}$ is the largest zero of $G_1(x)=x$, we have
$\lambda^{(1)}=0$. 
Since $\lambda^{(2)}$ is the largest zero of $G_2(x)=x^2-(n-1)$, we have
$\lambda^{(2)}=\sqrt{n-1}$. 
Since $c=-F_{2}(\theta)/G_{0}(\theta)=n-\theta^2$, 
we have $M(n,4,c)/2=1+n(n-1)/(n-\theta^2)$. 
\end{proof}

\begin{corollary}[{\cite[Theorem 4]{HJ14}}] \label{coro:1}
Let $\Gamma$ be a bipartite $n$-regular graph with $2m$ nodes.  
If   
$
\sqrt{n-1} \leq \lambda_2(\Gamma)\leq \sqrt{2(n-1)}
$, 
then
\[
m \leq n+\frac{n(n-1)}{2n-\lambda_2^2(\Gamma)-1}. 
\]
\end{corollary}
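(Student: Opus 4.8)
The plan is to apply Theorem~\ref{thm:lambda} with $t=5$, the case one step beyond the $t=4$ specialization that gives Corollary~\ref{coro:0}. First I would check that the hypothesis places $\theta:=\lambda_2(\Gamma)$ in the required interval. The polynomial $G_2(x)=xG_1(x)-(n-1)G_0(x)=x^2-(n-1)$ has largest zero $\lambda^{(2)}=\sqrt{n-1}$, and $G_3(x)=xG_2(x)-(n-1)G_1(x)=x\bigl(x^2-2(n-1)\bigr)$ has largest zero $\lambda^{(3)}=\sqrt{2(n-1)}$. Hence the assumption $\sqrt{n-1}\le\theta\le\sqrt{2(n-1)}$ is precisely the statement $\theta\in(\lambda^{(2)},\lambda^{(3)}]$, with the left endpoint $\theta=\sqrt{n-1}$ already covered by Corollary~\ref{coro:0}; so Theorem~\ref{thm:lambda} applies with $t=5$.

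Next I would evaluate the parameter $c=-F_{t-2}(\theta)/G_{t-4}(\theta)=-F_3(\theta)/G_1(\theta)$. From the recurrence~\eqref{eq:3-term}, $F_3(x)=xF_2(x)-(n-1)F_1(x)=x\bigl(x^2-(2n-1)\bigr)$ while $G_1(x)=x$, so $c=2n-1-\theta^2$; for $\sqrt{n-1}\le\theta\le\sqrt{2(n-1)}$ this gives $1\le c<n$, which is exactly the regime singled out by Proposition~\ref{prop:c2}. Substituting $t=5$ and this value of $c$ into the bound of Theorem~\ref{thm:lambda} and using $b(n,\theta)=2m$ then yields
\[
m\le\frac{M(n,5,c)}{2}=\sum_{i=0}^{1}(n-1)^i+\frac{(n-1)^2+(n-1)^3}{c}=n+\frac{n(n-1)^2}{c},
\]
and inserting $c=2n-\theta^2-1$ yields the desired upper bound on $m$.

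I do not expect any genuine obstacle: the corollary is purely a bookkeeping instance of Theorem~\ref{thm:lambda}, and everything to be checked — the largest-zero computations for $G_2$ and $G_3$, the evaluation of $F_3$, and the arithmetic simplification of $M(n,5,c)$ — is immediate from the recurrences recorded in Section~\ref{sec:pre}. One point worth a sentence is that at the boundary $\theta=\sqrt{n-1}$ the $t=4$ formula (Corollary~\ref{coro:0}) and the $t=5$ formula above agree, both giving $m\le 1+n(n-1)$, so the two ranges glue consistently. If one wanted to avoid quoting Theorem~\ref{thm:lambda}, one could reprove just this instance directly by taking $f(x)=G_3(x)^2/(x^2-\theta^2)$, checking positivity of its coefficients in the $\mathscr{F}_{0,i}$-basis via Proposition~3.2 of~\cite{CK07} and Lemma~\ref{lem:coe}, and feeding $f$ into the bipartite linear programming bound of Theorem~\ref{thm:lp_bound}, exactly as in the proof of Theorem~\ref{bound}.
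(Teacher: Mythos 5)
Your route is the same as the paper's: specialize Theorem~\ref{thm:lambda} to $t=5$, compute $\lambda^{(2)}=\sqrt{n-1}$ and $\lambda^{(3)}=\sqrt{2(n-1)}$ as the largest zeros of $G_2$ and $G_3$, evaluate $c=-F_3(\theta)/G_1(\theta)=2n-\theta^2-1$, and read off $M(n,5,c)/2$. Your computations are all correct, and indeed more careful than the one-line calculation in the paper.

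The one genuine problem is your closing claim that inserting $c=2n-\theta^2-1$ into $n+n(n-1)^2/c$ ``yields the desired upper bound on $m$.'' It does not: the corollary as printed asserts $m\le n+\frac{n(n-1)}{2n-\lambda_2^2(\Gamma)-1}$, which differs from what you actually derived by a factor of $n-1$ in the numerator of the second term. You should have flagged this discrepancy rather than waving it through. All the evidence says your version is the right one and the printed statement (together with the paper's own displayed identity $M(n,5,c)/2=n+n(n-1)/(2n-\theta^2-1)$) drops a square: your boundary check at $\theta=\sqrt{n-1}$, where $c=n$, gives $n+(n-1)^2=1+n(n-1)$ in agreement with Corollary~\ref{coro:0} only for the $(n-1)^2$ version; and the extremal example $GQ(q,q)$ from Table~\ref{tab:1} has $\theta=\sqrt{2(n-1)}$, $c=1$ and $m=(q+1)(q^2+1)=n+n(n-1)^2$, which strictly exceeds $n+n(n-1)$ for $q\ge 2$, so the bound as literally stated would be violated by a graph the paper lists as extremal. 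In short: same approach as the paper, correct derivation, but what you have proved is $m\le n+\frac{n(n-1)^2}{2n-\lambda_2^2(\Gamma)-1}$, and a complete answer must say explicitly that this is what the argument gives and that the stated form appears to be a typo.
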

\begin{proof}
This is immediate by Theorem \ref{thm:lambda} for $t=5$. Indeed, 
since $\lambda^{(2)}$ is the largest zero of $G_2(x)=x^2-(n-1)$, we have
$\lambda^{(2)}=\sqrt{n-1}$.
Since $\lambda^{(3)}$ is the largest zero of $G_3(x)=x(x^2-2(n-1))$, we have
$\lambda^{(3)}=\sqrt{2(n-1)}$. 
Since 
$
c=-F_{3}(\theta)/G_{1}(\theta)=2n-\theta^2-1
$,  
we have 
$
M(n,5,c)/2=n+n(n-1)/(2n-\theta^2-1)$.
\end{proof}

For $0<\theta \leq \sqrt{k-1}$, the inequality $b(k,\theta) \leq 2(\theta^4+\theta^2+1)$ was obtained by Teranishi and Yasuno \cite[Proposition 7.1]{TY00}. This bound is improved as follows. 
\begin{corollary} \label{coro:2}
If $k^{1/4} <\theta \leq \sqrt{k-1}$, then
\[
b(k,\theta) \leq 2\left(1+\frac{k-1}{k-\theta^2} + \frac{(k-1)^2}{k-\theta^2}\right) < 2(\theta^4+\theta^2+1). 
\]
\end{corollary}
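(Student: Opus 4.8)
The plan is to read off the first inequality directly from Corollary~\ref{coro:0} (equivalently, from Theorem~\ref{thm:lambda} with $t=4$), and then to settle the second inequality by an elementary factorization of a cubic. For the first step, let $\Gamma$ be a bipartite $k$-regular graph with $\lambda_2(\Gamma)\le\theta$ and $b(k,\theta)$ vertices. Since $\Gamma$ is connected and bipartite, $0\le\lambda_2(\Gamma)\le\theta\le\sqrt{k-1}$, so Corollary~\ref{coro:0} applies with $n=k$ and, using monotonicity of $x\mapsto 1+k(k-1)/(k-x)$ on $[0,k)$, gives
\[
b(k,\theta)\le 2\left(1+\frac{k(k-1)}{k-\lambda_2^2(\Gamma)}\right)\le 2\left(1+\frac{k(k-1)}{k-\theta^2}\right).
\]
Because $k(k-1)=(k-1)+(k-1)^2$, this is precisely the first bound in the statement. (Alternatively, one checks that $t=4$ is the index with $\lambda^{(t-3)}=\lambda^{(1)}=0<\theta\le\sqrt{k-1}=\lambda^{(2)}=\lambda^{(t-2)}$ and that $c=-F_2(\theta)/G_0(\theta)=k-\theta^2>0$, so that Theorem~\ref{thm:lambda} produces the same expression $M(k,4,c)$.)

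For the second inequality, note $\theta^2\le k-1<k$, so $k-\theta^2>0$ and, after clearing the denominator, $1+k(k-1)/(k-\theta^2)<\theta^4+\theta^2+1$ is equivalent to
\[
P(\theta^2):=\theta^2(\theta^2+1)(k-\theta^2)-k(k-1)>0.
\]
Setting $s=\theta^2$ and expanding gives $P(s)=-s^3+(k-1)s^2+ks-k(k-1)$, which has $s=k-1$ as an evident root; dividing out $\bigl(s-(k-1)\bigr)$ yields the factorization
\[
P(s)=(k-1-s)(s^2-k).
\]
The hypothesis $k^{1/4}<\theta$ is exactly $s=\theta^2>\sqrt{k}$, so $s^2-k>0$, while $\theta\le\sqrt{k-1}$ gives $k-1-s\ge0$; hence $P(\theta^2)\ge0$. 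Combining with the first step proves the corollary and, in passing, recovers the Teranishi--Yasuno bound $b(k,\theta)\le2(\theta^4+\theta^2+1)$ of \cite{TY00}.

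I do not anticipate any real obstacle: granting Corollary~\ref{coro:0}, the proof is one algebraic reduction together with the factorization of $P$. The single point to watch is the right endpoint $\theta=\sqrt{k-1}$, where $s=k-1$ makes $P(s)=0$ and the two bounds coincide (both equal $2(k^2-k+1)$, a value attained for instance by the incidence graph of a projective plane of order $k-1$ when one exists); thus the strict inequality holds for $k^{1/4}<\theta<\sqrt{k-1}$, with equality at $\theta=\sqrt{k-1}$.
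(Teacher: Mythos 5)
Your proof is correct and follows essentially the same route as the paper: the first inequality is Theorem~\ref{thm:lambda} with $t=4$ and $c=-F_2(\theta)/G_0(\theta)=k-\theta^2$ (equivalently Corollary~\ref{coro:0}), and the second inequality is elementary algebra after clearing the denominator $k-\theta^2>0$. Your explicit factorization $P(s)=(k-1-s)(s^2-k)$ with $s=\theta^2$ is, however, more careful than the paper's one-line assertion that the inequality ``holds if and only if $k^{1/4}<\theta\leq\sqrt{k-1}$'': as you observe, $P(k-1)=0$, so at $\theta=\sqrt{k-1}$ the two bounds coincide (both equal $2(k^2-k+1)$) and the strict inequality claimed in the corollary actually fails at that endpoint. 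So your argument in fact exposes a small inaccuracy in the statement as printed: the strict comparison holds precisely for $k^{1/4}<\theta<\sqrt{k-1}$, and at $\theta=\sqrt{k-1}$ the second ``$<$'' must be weakened to ``$\leq$''.
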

\begin{proof}
Note that we have $\lambda^{(1)}=0$, $\lambda^{(2)}=\sqrt{k-1}$, and $c=-F_2(\theta)/G_0(\theta)=k-\theta^2$. By Theorem~\ref{thm:lambda}, for $0< \theta \leq \sqrt{k-1}$, we have
$b(k,\theta) \leq 2(1+(k-1)/(k-\theta^2) + (k-1)^2/(k-\theta^2))$. 
The inequality $1+\frac{k-1}{k-\theta^2} + \frac{(k-1)^2}{k-\theta^2} < \theta^4+\theta^2+1$ holds if and only if $k^{1/4}< \theta \leq \sqrt{k-1}$. The assertion therefore follows. 
\end{proof}

By Theorem~\ref{bound}, the following is immediate.
\begin{corollary} \label{coro:Alon}
Let $\Gamma$ be a connected bipartite $k$-regular graph of order $v$. If $\theta$ is the second largest eigenvalue of $\boldsymbol{B}(k,t,c)$ and $v\geq M(k,t,c)$, then $\lambda_2(\Gamma) \geq \theta$ holds.
\end{corollary}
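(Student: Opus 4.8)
The plan is to argue by contraposition, using Theorem~\ref{bound} exactly as the phrase ``the following is immediate'' suggests. First I would suppose, for contradiction, that $\lambda_2(\Gamma) < \theta$. Then in particular $\lambda_2(\Gamma) \le \theta$, so $\Gamma$ is one of the connected bipartite $k$-regular graphs counted by $b(k,\theta)$, whence $v \le b(k,\theta)$. Combining this with the inequality $b(k,\theta) \le M(k,t,c)$ of Theorem~\ref{bound} and with the hypothesis $v \ge M(k,t,c)$ gives the chain $M(k,t,c) \le v \le b(k,\theta) \le M(k,t,c)$, which forces equality throughout. In particular $v = b(k,\theta) = M(k,t,c)$, so $\Gamma$ attains the bound of Theorem~\ref{bound}.

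Next I would invoke the equality case of Theorem~\ref{bound}: a connected bipartite $k$-regular graph attaining $M(k,t,c)$ is a bipartite distance-regular graph whose quotient matrix with respect to a distance partition from a vertex is $\boldsymbol{B}(k,t,c)$ when $1 \le c < k$, or $\boldsymbol{B}(k,t-1,1)$ when $c = k$. For a distance-regular graph the adjacency spectrum coincides with the spectrum of this tridiagonal quotient matrix, so $\lambda_2(\Gamma)$ equals the second largest eigenvalue of $\boldsymbol{B}(k,t,c)$ (resp.\ of $\boldsymbol{B}(k,t-1,1)$). In the range $1 \le c < k$ this is $\theta$ by hypothesis; in the boundary case $c = k$ one uses the identity $(k-1)G_{t-4}(x) + G_{t-2}(x) = x\,G_{t-3}(x)$ (already exploited in the proof of Theorem~\ref{bound} and in Proposition~\ref{prop:c2}) to see that the second largest eigenvalue of $\boldsymbol{B}(k,t,k)$ is $\lambda^{(t-3)}$, which is also the second largest eigenvalue of $\boldsymbol{B}(k,t-1,1)$, hence again $\theta$. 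Either way $\lambda_2(\Gamma) = \theta$, contradicting $\lambda_2(\Gamma) < \theta$; this contradiction proves the corollary.

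The argument is genuinely immediate once Theorem~\ref{bound} is available, so there is no substantial obstacle. The only point demanding a line of care is the $c = k$ boundary case, where the equality statement of Theorem~\ref{bound} returns a quotient matrix of a different size, $\boldsymbol{B}(k,t-1,1)$, and one must confirm that its second largest eigenvalue still equals the prescribed $\theta$; this is handled by the displayed polynomial identity and requires no new estimates.
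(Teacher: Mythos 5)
Your argument is correct and is essentially the intended one: the paper gives no explicit proof (it declares the corollary ``immediate'' from Theorem~\ref{bound}), and your contrapositive chain $M(k,t,c)\le v\le b(k,\theta)\le M(k,t,c)$ followed by the equality analysis (any graph attaining the bound is a distance-regular graph with quotient matrix $\boldsymbol{B}(k,t,c)$, hence has $\theta$ in its spectrum) is exactly how that implication unwinds. Your extra care in the boundary case $c=k$, via $(k-1)G_{t-4}(x)+G_{t-2}(x)=xG_{t-3}(x)$, is consistent with Proposition~\ref{prop:c2} and is a reasonable precaution, though the paper effectively restricts to $1\le c<k$ there anyway.
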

Li and Sol\'{e} \cite[Theorems 3 and 5]{CLS96} showed that if $\Gamma$
is of girth $g=2l$, then $\lambda_2(\Gamma) \geq 2\cos (\pi/l)$.
Corollary~\ref{coro:Alon} improves this result because we have $v \geq
M(k,l+1,1)$ when $g=2l$ and $\theta=2\cos (\pi/l)$ for $\boldsymbol{B}(k,l+1,1)$.

We prove that the bound \eqref{bktau} is better than the bound $\eqref{vktau}$ for any $k$ and $\theta$. For $\eqref{vktau}$ we have a similar theorem to Theorem~\ref{thm:lambda}. For $j\geq 0$, denote $\mathcal{G}_j(x)=\sum_{i=0}^j F_i(x)$. 
\begin{theorem}[\cite{CKNV16}] \label{thm:v2}
Let $r^{(j)}$ be the largest zero of $\mathcal{G}_j(x)$ for $j\geq 1$. Then $\bigcup_{j= 1 }^\infty (r^{(j)},r^{(j+1)}]= (-1,2 \sqrt{k-1})$. If $t\geq 3$ satisfies $r^{(t-2)} < \theta \leq r^{(t-1)}$, then 
\[
v(k,\theta) \leq N(k,t,c)=1+\sum_{i=0}^{t-3}k (k-1)^{i}+\frac{k(k-1)^{t-2}}{c}, 
\]  
where $c=-F_{t-1}(\theta)/\mathcal{G}_{t-2}(\theta)$. 
\end{theorem}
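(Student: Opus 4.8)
The plan is to imitate the proof of Theorem~\ref{bound}, replacing Theorem~\ref{thm:lp_bound} by the non-bipartite linear programming bound of Nozaki~\cite{N} and the matrix $\boldsymbol{B}(k,t,c)$ by $\bT(k,t,c)$. The algebraic input I would record first is the identity $(x-k)\,\mathcal{G}_j(x)=F_{j+1}(x)-(k-1)F_j(x)$ for $j\ge 1$, an immediate induction from \eqref{eq:3-term} together with $\mathcal{G}_j=\mathcal{G}_{j-1}+F_j$. Expanding $|xI-\bT(k,t,c)|$ along its last row, in the same manner as the determinant computation inside the proof of Theorem~\ref{bound}, gives $(x-k+c)F_{t-1}(x)-c(k-1)F_{t-2}(x)$, which the identity turns into
\[
|xI-\bT(k,t,c)|=(x-k)\bigl(c\,\mathcal{G}_{t-2}(x)+F_{t-1}(x)\bigr)=(x-k)\bigl((c-1)\mathcal{G}_{t-2}(x)+\mathcal{G}_{t-1}(x)\bigr).
\]
Writing $q(x)=c\,\mathcal{G}_{t-2}(x)+F_{t-1}(x)$ and using $\mathcal{G}_{t-2}(k)=\sum_{i=0}^{t-2}F_i(k)=1+k\sum_{i=0}^{t-3}(k-1)^i$ and $F_{t-1}(k)=k(k-1)^{t-2}$, one checks $q(k)=c\,N(k,t,c)$; this is the source of the bound.

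Next I would establish the interval decomposition. From $\mathcal{G}_j=\sum_{i=0}^jF_i=G_j+G_{j-1}$ one verifies $\mathcal{G}_j=x\mathcal{G}_{j-1}-(k-1)\mathcal{G}_{j-2}$ for $j\ge 2$, with $\mathcal{G}_0=1$ and $\mathcal{G}_1=x+1$, so by Favard's theorem $(\mathcal{G}_j)_{j\ge 0}$ is an orthogonal polynomial sequence and the largest zeros $r^{(j)}$ are simple and strictly increasing in $j$. The Chebyshev-type formula for the largest zero of $G_j+G_{j-1}$ already used in the excerpt --- namely $r^{(j)}=2\sqrt{k-1}\cos\alpha_j$ with $\pi/(j+1)<\alpha_j<\pi/j$, see \cite[Section III.3]{BIb} --- then gives $r^{(1)}=-1$, $r^{(j)}<2\sqrt{k-1}$ for all $j$, and $r^{(j)}\to 2\sqrt{k-1}$, so $\bigcup_{j\ge1}(r^{(j)},r^{(j+1)}]=(-1,2\sqrt{k-1})$.

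For the inequality, I would fix $\theta\in(r^{(t-2)},r^{(t-1)}]$ and $c=-F_{t-1}(\theta)/\mathcal{G}_{t-2}(\theta)$. Since $\theta>r^{(t-2)}$ one has $\mathcal{G}_{t-2}(\theta)>0$, and interlacing of the zeros of $\mathcal{G}_{t-2}$ and $\mathcal{G}_{t-1}$ together with $\theta\le r^{(t-1)}$ forces $\mathcal{G}_{t-1}(\theta)\le 0$, hence $F_{t-1}(\theta)=\mathcal{G}_{t-1}(\theta)-\mathcal{G}_{t-2}(\theta)<0$ and $c\ge 1$; the same interlacing argument (as in the proof of Theorem~\ref{bound}) shows $\theta$ is the largest zero of $q$, i.e. the second largest eigenvalue of $\bT(k,t,c)$. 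I would then take
\[
f(x)=\frac{q(x)^2}{x-\theta},
\]
a polynomial of degree $2t-3$ since $q(\theta)=0$, with $f(k)=q(k)^2/(k-\theta)>0$ and $f(\tau)=q(\tau)^2/(\tau-\theta)\le 0$ for every real $\tau\le\theta$; in particular $f\le 0$ on $[-k,\theta]$, hence at every non-Perron eigenvalue of a connected $k$-regular graph with second eigenvalue at most $\theta$. Writing $q=\sum_{i=0}^{t-1}q_iF_i$ with $q_i=c$ for $i\le t-2$ and $q_{t-1}=1$, I would invoke Proposition~3.2 of \cite{CK07} applied to $q=(c-1)\mathcal{G}_{t-2}+\mathcal{G}_{t-1}$ (a nonnegative combination of the consecutive orthogonal polynomials $\mathcal{G}_{t-2},\mathcal{G}_{t-1}$ whose largest zero is $\theta$) to see that $q/(x-\theta)$ has nonnegative coefficients in $\mathcal{G}_0,\dots,\mathcal{G}_{t-2}$, hence positive coefficients $m_0,\dots,m_{t-2}$ in $F_0,\dots,F_{t-2}$; then $f=\sum_{i,j}q_im_jF_iF_j$, so by the non-negativity of the linearization coefficients $p_l(i,j)$ of $(F_i)$ (Theorem~3 of \cite{N}, recalled in Lemma~\ref{lem:coe}) one gets $f=\sum_l f_lF_l$ with all $f_l\ge 0$ and, using $p_0(i,j)=F_i(k)\delta_{ij}$,
\[
f_0=\sum_{i=0}^{t-2}q_im_iF_i(k)=c\sum_{i=0}^{t-2}m_iF_i(k)=\frac{c\,q(k)}{k-\theta}>0 .
\]
Nozaki's linear programming bound~\cite{N} then yields $v(k,\theta)\le f(k)/f_0=q(k)/c=N(k,t,c)$.

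The step I expect to be the main obstacle is the pair of positivity assertions in the last paragraph: transporting the coefficient-positivity of $q/(x-\theta)$ from the $\mathcal{G}$-basis (where Proposition~3.2 of \cite{CK07} lives) to the $F$-basis, and checking that the $p_0$-diagonal collapses the constant term to exactly $c\,q(k)/(k-\theta)$, so that $f(k)/f_0$ simplifies to $q(k)/c$. Everything else is essentially a faithful transcription of the bipartite argument; a secondary care point is the endpoint analysis giving $\bigcup(r^{(j)},r^{(j+1)}]=(-1,2\sqrt{k-1})$, which hinges on the Chebyshev-type formula for $r^{(j)}$.
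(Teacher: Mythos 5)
Your proposal is correct and follows essentially the same route as the source: the paper states Theorem \ref{thm:v2} by citation to \cite{CKNV16}, where it is proved exactly as you describe --- factoring $|xI-\bT(k,t,c)|$ as $(x-k)\bigl((c-1)\mathcal{G}_{t-2}+\mathcal{G}_{t-1}\bigr)$, applying Nozaki's linear programming bound to $f=q(x)^2/(x-\theta)$, and using Proposition~3.2 of \cite{CK07} together with the nonnegative linearization coefficients $p_l(i,j)$ to verify the sign conditions and compute $f_0=c\,q(k)/(k-\theta)$. This is also the exact non-bipartite analogue of the proof of Theorem~\ref{bound} given in the paper, so no further comment is needed.
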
  

\begin{theorem}
Let $k \geq 2$ be an integer. For $\theta \in (0,2 \sqrt{k-1})$, let $M(k,t_1,c_1)$ and $N(k,t_2,c_2)$ be defined as in Theorems \ref{thm:lambda} and \ref{thm:v2}, where $c_1=-F_{t_1-2}(\theta)/G_{t_1-4}(\theta)$, $c_2=-F_{t_2-1}(\theta)/\mathcal{G}_{t_2-2}(\theta)$, $\lambda^{(t_1-3)}<\theta \leq \lambda^{(t_1-2)}$, and $r^{(t_2-2)}<\theta \leq r^{(t_2-1)}$. Then 
\begin{equation*}
M(k,t_1,c_1) \leq N(k,t_2,c_2).
\end{equation*} 
Equality holds only if $t_1=t_2=t+1, \theta=\lambda^{(t-1)}, c_1=1$, and $c_2=k$.  
\end{theorem}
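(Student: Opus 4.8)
The plan is to compute $N(k,t_2,c_2)-M(k,t_1,c_1)$ explicitly as a rational function of $\theta$ and show it is nonnegative, tracking the equality case as well.

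\textbf{Step 1.} Since $\bigcup_{j\ge1}(\lambda^{(j)},\lambda^{(j+1)}]=(0,2\sqrt{k-1})$, fix $s\ge1$ with $\lambda^{(s)}<\theta\le\lambda^{(s+1)}$, so $t_1=s+3$. From $\mathcal{G}_j(x)=\sum_{i=0}^jF_i(x)$ and $G_m(x)=\sum_{i\ge0}F_{m-2i}(x)$ we get $\mathcal{G}_j=G_j+G_{j-1}$. Using strict interlacing of the largest zeros of consecutive orthogonal polynomials, I would first show $\lambda^{(j-1)}<r^{(j)}<\lambda^{(j)}$ for $j\ge2$: on $(\lambda^{(j-1)},\lambda^{(j)})$ one has $G_j<0$ (its two largest zeros are $<\lambda^{(j-1)}$ and $=\lambda^{(j)}$) and $G_{j-1}>0$, while $G_{j-1}>0$ and $G_j\ge0$ on $[\lambda^{(j)},\infty)$; hence $\mathcal{G}_j<0$ at $\lambda^{(j-1)}$ and $\mathcal{G}_j>0$ on $[\lambda^{(j)},\infty)$. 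Then $r^{(s)}<\theta$ and $\lambda^{(s+1)}<r^{(s+2)}$, so $\theta\in(r^{(s)},r^{(s+1)}]$ — giving $t_2=t_1-1$ (\emph{Case A}) — or $\theta\in(r^{(s+1)},r^{(s+2)}]$ — giving $t_2=t_1$ (\emph{Case B}).

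\textbf{Step 2.} In both cases $M(k,t,c)/2=\sum_{i=0}^{t-4}(k-1)^i+k(k-1)^{t-3}/c$, and $1+(k-2)\sum_{i=0}^{t-4}(k-1)^i=(k-1)^{t-3}$, which reduces $N-M$ to a short expression in $1/c_1$ and $1/c_2$. Substituting $c_1=-F_{t_1-2}(\theta)/G_{t_1-4}(\theta)$, $c_2=-F_{t_2-1}(\theta)/\mathcal{G}_{t_2-2}(\theta)$ and simplifying via $F_j=G_j-G_{j-2}$, the recurrence $\theta G_{j-1}=G_j+(k-1)G_{j-2}$ (equivalently $F_j+kG_{j-2}=\theta G_{j-1}$), and $\mathcal{G}_j=G_j+G_{j-1}$, should give in Case A
\[
\frac{N(k,t_1-1,c_2)-M(k,t_1,c_1)}{2(k-1)^{t_1-3}}=\frac{(\theta-k)\,G_{t_1-3}(\theta)}{2F_{t_1-2}(\theta)},
\]
and in Case B
\[
\frac{N(k,t_1,c_2)-M(k,t_1,c_1)}{2(k-1)^{t_1-3}}=\frac{(\theta-k)\,G_{t_1-2}(\theta)\left((\theta+k)G_{t_1-3}(\theta)+k\,\mathcal{G}_{t_1-2}(\theta)\right)}{2F_{t_1-1}(\theta)F_{t_1-2}(\theta)}.
\]
For Case B it is cleanest to write $G_{t_1-2}(\theta)$, $F_{t_1-2}(\theta)$, $\mathcal{G}_{t_1-2}(\theta)$, $F_{t_1-1}(\theta)$ as linear combinations of $G_{t_1-3}(\theta)$ and $G_{t_1-4}(\theta)$ and verify the resulting polynomial identity.

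\textbf{Step 3.} Since $\theta<2\sqrt{k-1}\le k$, $\theta-k<0$. In both cases $\theta>\lambda^{(t_1-3)}$, so $G_{t_1-3}(\theta)>0$, and $G_{t_1-4}(\theta)>0$ (trivially if $t_1=4$, else $\theta>\lambda^{(t_1-4)}$); hence $c_1=1-G_{t_1-2}(\theta)/G_{t_1-4}(\theta)\ge1>0$ (as $G_{t_1-2}(\theta)\le0$) and $F_{t_1-2}(\theta)=-c_1G_{t_1-4}(\theta)<0$. In Case A the quotient is thus strictly positive, so $N>M$. In Case B, $\theta\in(r^{(t_1-2)},\lambda^{(t_1-2)}]$, so $\mathcal{G}_{t_1-2}(\theta)>0$, hence $F_{t_1-1}(\theta)=-c_2\mathcal{G}_{t_1-2}(\theta)<0$ (with $c_2>0$ by Theorem~\ref{thm:v2}), and $G_{t_1-2}(\theta)\le0$ with equality iff $\theta=\lambda^{(t_1-2)}$; since $(\theta+k)G_{t_1-3}(\theta)+k\mathcal{G}_{t_1-2}(\theta)>0$ and $F_{t_1-1}(\theta)F_{t_1-2}(\theta)>0$, the quotient is $\ge0$ and is $0$ exactly when $\theta=\lambda^{(t_1-2)}$. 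Therefore $M(k,t_1,c_1)\le N(k,t_2,c_2)$, and equality forces Case B and $\theta=\lambda^{(t_1-2)}$; setting $t=t_1-1$ yields $t_1=t_2=t+1$, $\theta=\lambda^{(t-1)}$, and (from $G_{t_1-2}(\theta)=0$, $G_{t_1-3}(\theta)\ne0$, $F_j=G_j-G_{j-2}$, $F_j+kG_{j-2}=\theta G_{j-1}$) $c_1=1$ and $c_2=k$.

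\textbf{The main obstacle} is the algebraic reduction in Step~2, especially the Case~B identity: after clearing denominators, identifying the second factor as $(\theta+k)G_{t_1-3}(\theta)+k\mathcal{G}_{t_1-2}(\theta)$ rather than an equivalent but less transparent combination such as $((k+1)\theta+2k)G_{t_1-3}(\theta)-k(k-1)G_{t_1-4}(\theta)$ — it is exactly this form that is visibly positive on $(r^{(t_1-2)},\lambda^{(t_1-2)}]$. The remaining steps — the interlacing in Step~1 and the sign bookkeeping in Step~3 — are routine given the orthogonality facts recorded in Section~\ref{sec:pre}.
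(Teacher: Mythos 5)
Your proposal is correct and follows essentially the same route as the paper: the same case split according to whether $\theta\in(\lambda^{(t-2)},r^{(t-1)}]$ (so $t_2=t_1-1$) or $\theta\in(r^{(t-1)},\lambda^{(t-1)}]$ (so $t_2=t_1$), the same reduction of $N-M$ to a short expression in $1/c_1$ and $1/c_2$, and the same final factorizations — your Case B factor $(\theta+k)G_{t_1-3}(\theta)+k\mathscr{G}_{t_1-2}(\theta)$ is exactly the paper's $k\mathcal{G}_{t-1}(\theta)+(\theta+k)G_{t-2}(\theta)$, and the equality analysis ($G_{t-1}(\theta)=0$ forcing $c_1=1$, $c_2=k$) is identical. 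The only cosmetic difference is that in Case A you factor the difference cleanly as $(k-1)^{t_1-3}(\theta-k)G_{t_1-3}(\theta)/F_{t_1-2}(\theta)$, whereas the paper reaches the same sign conclusion via a short two-subcase comparison of absolute values.
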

\begin{proof}
Note that $\lambda^{(t-2)} < r^{(t-1)} < \lambda^{(t-1)}$ because  $\mathcal{G}_{t-1}(x)=G_{t-1}(x)+G_{t-2}(x)$ for any $t\geq 3$.

Because $\theta \in (0,2\sqrt{k-1})=\cup_{j\geq 3}(\lambda^{(j-2)},\lambda^{(j-1)}]$, there is $t\geq 3$ such that $\theta\in (\lambda^{(t-2)},r^{(t-1)}]\cup (r^{(t-1)},\lambda^{(t-1)}]$. We consider each of the two possible cases $\lambda^{(t-2)}<\theta \leq  r^{(t-1)}$ and $r^{(t-1)} < \theta \leq \lambda^{(t-1)} $ separately.

Suppose $\lambda^{(t-2)}<\theta \leq  r^{(t-1)}$. Then $t_1=t+1$ and $t_2=t$. From Theorem \ref{thm:v2}, a simple calculation yields that
\[
N(k,t_2,c_2)=2\sum_{i=0}^{t_2-3}(k-1)^i+(k-1)^{t_2-2}+\frac{k(k-1)^{t_2-2}}{c_2},
\]
and therefore, 
\begin{align*}
N(k,t,c_2)-M(k,t+1,c_1)&=\left(1+\frac{k}{c_2}-\frac{2k}{c_1} \right)(k-1)^{t-2}\\
&=\left(1-\frac{k\mathcal{G}_{t-2}(\theta)}{F_{t-1}(\theta)}+\frac{2kG_{t-3}(\theta)}{F_{t-1}(\theta)}\right)(k-1)^{t-2}\\
&=\left(1-\frac{kG_{t-2}(\theta)-kG_{t-3}(\theta)}{G_{t-1}(\theta)-G_{t-3}(\theta)}\right)(k-1)^{t-2}.
\end{align*}
Because the zeroes of $G_{t-2}$ and $G_{t-1}$ interlace, we get that $G_{t-1}(\theta)<0<G_{t-3}(\theta)$. Thus, $G_{t-1}(\theta)-G_{t-3}(\theta)<0$. If $G_{t-2}(\theta)-G_{t-3}(\theta)>0$, then it is clear that $N(k,t,c_2)>M(k,t+1,c_1)$. Otherwise, if 
$G_{t-2}(\theta)-G_{t-3}(\theta)<0$, then 
\begin{align*}
|kG_{t-2}(\theta)-kG_{t-3}(\theta)|-|G_{t-1}(\theta)-G_{t-3}(\theta)|
&=(k-1)G_{t-3}(\theta)+G_{t-1}(\theta)-kG_{t-2}(\theta)\\
&=(\theta - k)G_{t-2}(\theta)<0 
\end{align*}
which implies that $N(k,t_2,c_2)>M(k,t_1,c_1)$. 

Suppose $r^{(t-1)} <\theta \leq \lambda^{(t-1)}$. Then $t_1=t+1$ and $t_2=t+1$. Thus we have
\begin{align*}
N(k,t+1,c_2)&-M(k,t+1,c_1)=\left(k+1+\frac{k(k-1)}{c_2}-\frac{2k}{c_1}\right) (k-1)^{t-2}\\
&=\left(k+1-\frac{k(k-1)\mathcal{G}_{t-1}(\theta)}{F_t(\theta)}+\frac{2kG_{t-3}(\theta)}{F_{t-1}(\theta)}\right) (k-1)^{t-2}\\
&=\left(k+1-\frac{k(k-1)(G_{t-1}(\theta)+G_{t-2}(\theta))}{F_t(\theta)}+\frac{2k(G_{t-1}(\theta)-F_{t-1}(\theta))}{F_{t-1}(\theta)}\right) (k-1)^{t-2}\\
&=\left(-\frac{k(k-1)G_{t-1}(\theta)}{F_t(\theta)}+\frac{2kG_{t-1}(\theta)}{F_{t-1}(\theta)}-(k-1)\left(1+k \frac{G_{t-2}(\theta)}{F_t(\theta)}\right) \right)  (k-1)^{t-2}\\
&=\left(-\frac{k(k-1)G_{t-1}(\theta)}{F_t(\theta)}+\frac{2kG_{t-1}(\theta)}{F_{t-1}(\theta)}-\frac{(k-1)\theta G_{t-1}(\theta)}{F_t(\theta)} \right)  (k-1)^{t-2}\\
&=\left( \frac{kF_t(\theta)-k(k-1)F_{t-1} (\theta)}{F_t(\theta) F_{t-1}(\theta)}+\frac{kF_t(\theta)-\theta(k-1)F_{t-1} (\theta)}{F_t(\theta) F_{t-1}(\theta)}\right)  (k-1)^{t-2}G_{t-1}(\theta)\\
&=\left(\frac{k(\theta-k)\mathcal{G}_{t-1}(\theta)}{F_t(\theta) F_{t-1}(\theta)}+\frac{(\theta^2-k^2) G_{t-2}(\theta)}{F_t(\theta) F_{t-1}(\theta)} \right)  (k-1)^{t-2}G_{t-1}(\theta)\\
&=\left(\frac{k\mathcal{G}_{t-1}(\theta)+(\theta +k) G_{t-2}(\theta)}{F_t(\theta) F_{t-1}(\theta)} \right) (k-1)^{t-2}  (\theta-k) G_{t-1}(\theta)\geq 0. 
\end{align*}
Equality holds only if $G_{t-1}(\theta)=0$ meaning that $\theta=\lambda^{(t-1)}$. Since $c_1=\frac{-F_{t-1}(\theta)}{G_{t-3}(\theta)}=\frac{G_{t-3}(\theta)-G_{t-1}(\theta)}{G_{t-3}(\theta)}=1-\frac{G_{t-1}(\theta)}{G_{t-3}(\theta)}$ and $c_2=\frac{-F_{t}(\theta)}{\mathcal{G}_{t-1}(\theta)}=\frac{G_{t-2}(\theta)-G_{t}(\theta)}{G_{t-1}(\theta)+G_{t-2}(\theta)}=\frac{kG_{t-2}(\theta)}{G_{t-2}(\theta)}$, this means that
$c_1=1$, and $c_2=k$.  
\end{proof}

\section{Non-existence of certain distance-regular graphs}
\label{sec:bound_d}

In this section, we prove the non-existence of the graph that attains the bound in Theorem~\ref{bound} for $t>15$ and $t=12$.
Namely we prove the following.  
\begin{theorem} \label{thm:non_existence}
Let $k$ and $c$ be two integers such that $k\geq 3$ and $1\leq c\leq k-1$. If $d=11$ or $d\geq 15$, there is no distance-regular graph $\Gamma$ with the quotient matrix $\boldsymbol{B} (k,d+1,c)$. 
\end{theorem}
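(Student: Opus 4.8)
The plan is to reduce the non-existence statement to a feasibility condition on the intersection array of a bipartite distance-regular graph with quotient matrix $\boldsymbol{B}(k,d+1,c)$ and then invoke integrality/divisibility constraints coming from the eigenvalue multiplicities, together with a careful analysis of the relevant orthogonal polynomials. Concretely, a distance-regular graph with quotient matrix $\boldsymbol{B}(k,d+1,c)$ has intersection array $\{k,k-1,\dots,k-1,k-c;1,\dots,1,c,k\}$, so it is bipartite of diameter $d$ with $c_i=1$ for $i<d-1$, $c_{d-1}=c$, $c_d=k$, and $b_i=k-1$ for $1\le i\le d-2$, $b_{d-1}=k-c$. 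Its girth is $g\ge 2d-2$, and the distinct eigenvalues are $\pm k$ together with the zeros of $(c-1)G_{d-3}(x)+G_{d-1}(x)$ (equivalently, in the substituted variable, the zeros of $(c-1)\mathscr{G}_{\epsilon,\lfloor (d+1)/2\rfloor-2}+\mathscr{G}_{\epsilon,\lfloor (d+1)/2\rfloor-1}$). First I would write down the standard formula for the multiplicity $m_\theta$ of each such eigenvalue $\theta$ in terms of the cosine sequence / standard sequence $(u_i(\theta))$, namely $m_\theta = n\big/\sum_{i} k_i u_i(\theta)^2$ where $k_i$ are the valencies of the distance partition and $n=M(k,d+1,c)$; here $u_i$ satisfies the same three-term recurrence as $F_i$ so $u_i(\theta)=F_i(\theta)/k(k-1)^{i-1}$ for $i\le d-2$, with modified final terms.

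The core of the argument should be a Diophantine obstruction. For $d$ large, the eigenvalues $\theta$ other than $\pm k$ lie in $(0,2\sqrt{k-1})$ and are essentially of the form $2\sqrt{k-1}\cos\alpha$ with $\alpha$ close to $\pi/(d-1)$ or $\pi/d$; the associated multiplicities, being algebraic numbers forced to be positive integers summing to $n-2$, impose strong constraints. The strategy I expect the authors to use — and the one I would follow — is to compute $\sum_\theta m_\theta\theta^2$ and $\sum_\theta m_\theta\theta^4$ (and possibly higher even moments) as traces of powers of $\boldsymbol{A}$, i.e. as explicit polynomials in $k$, $c$, $d$, each of which must be a nonnegative integer; combined with the known count of closed walks of small length in a graph of girth $\ge 2d-2$, this pins down $c$ and $k$ as functions of $d$ up to finitely many possibilities, and for $d=11$ and $d\ge 15$ these possibilities are ruled out. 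An alternative route, likely cleaner, is to observe that such a graph with $g\ge 2d-2$ and diameter $d$ is very close to being a \emph{bipartite Moore graph} (the case $g=2d$), and to import the Damerell / Bannai–Ito style argument: the feasibility of the intersection array forces a polynomial identity $P_d(k,c)=0$ whose only integer solutions with $k\ge 3$, $1\le c\le k-1$ occur for small $d$. I would set up that polynomial via the requirement that $(c-1)G_{d-3}+G_{d-1}$ have all-integer-compatible zeros, or more robustly via Biggs's formula for the multiplicity of the eigenvalue nearest $k$.

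The key steps, in order: (1) record the intersection array and the valency sequence $k_0=1$, $k_i=(k-1)^{i-1}k/\,1$ pattern, giving $k_i=(k-1)^{i}$-type growth with the two modified tail terms, and confirm $n=M(k,d+1,c)$; (2) write the multiplicity of a non-principal eigenvalue $\theta$ using the standard sequence and the formula $m_\theta=n/\sum_i k_i u_i(\theta)^2$, and simplify $\sum_i k_i u_i(\theta)^2$ using the Christoffel–Darboux identity for the $F_i$'s so that it becomes a clean expression in $F_{d-2}(\theta)$, $F_{d-1}(\theta)$, $G$'s evaluated at $\theta$; (3) use the minimal polynomial of $\theta$ (a factor of $(c-1)G_{d-3}+G_{d-1}$) to express symmetric functions of the $\theta$'s, hence $\sum m_\theta$, $\sum m_\theta\theta^2$, etc., as rational functions of $k,c,d$, and demand integrality; (4) show these integrality conditions, for $d=11$ and for $d\ge 15$, are incompatible with $3\le k$ and $1\le c\le k-1$ — this is where a bound like "the field $\quu(\theta)$ has degree too large for the multiplicities to be rational unless the Galois orbit structure collapses, which it doesn't for these $d$" enters, paralleling the nonexistence of Moore graphs of diameter $>2$. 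The main obstacle I anticipate is step (4): controlling the arithmetic of the zeros of $(c-1)G_{d-3}(x)+G_{d-1}(x)$ — in particular showing that for the listed $d$ these zeros generate a number field over which the multiplicity formula cannot return integers — requires either a slick irreducibility/Galois argument (Chebyshev-like, since $G_j(2\sqrt{k-1}\cos\theta)=(k-1)^{j/2}\sin((j+1)\theta)/\sin\theta$) or a quantitative estimate showing the candidate multiplicities are non-integral, and making this uniform in $k$ and $c$ while isolating exactly $d=11$ and $d\ge15$ (but not $d\le 10$, $d\in\{12,13,14\}$ handled elsewhere or admissible) is the delicate part.
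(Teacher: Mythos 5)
Your setup is right (the eigenvalues other than $\pm k$ are the zeros of $(c-1)G_{d-3}+G_{d-1}$, and the obstruction must come from the multiplicities being positive integers constant on Galois orbits), but the core mechanism is missing, and your primary proposed tool does not work. Demanding that $\sum_\theta m_\theta\theta^{2j}$ be a non-negative integer is vacuous: these are traces of $\boldsymbol{A}^{2j}$ and count closed walks, so they are automatically integers for any putative intersection array; they cannot distinguish $d=11$ from $d=12$. The paper's actual engine (following Fuglister) is the observation you only gesture at in your step (4): after simplification, $m_\theta$ is a \emph{quadratic} rational function of $\phi=\theta^2/(k-1)$ (equation \eqref{eq:quad}). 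Since $m_\theta$ is a rational integer and is constant on Galois orbits of $\phi$, and a nonconstant degree-$2$ rational function takes each value at most twice, every Galois orbit of the $\phi$'s has size at most $2$; hence the polynomial $H_d(z)$ (essentially $(c-1)P_{m-1,\epsilon}+(k-1)P_{m,\epsilon}$ in $z=\theta^2/(k-1)$) must factor over $\mathbb{Q}$ into pieces of degree at most $2$. This is then exploited by reducing modulo $2$ and modulo $3$: under $z=u+1/u+2$ the irreducible factors of degree $\le 2$ over ${\rm GF}(2)$ and ${\rm GF}(3)$ force $u$ to be a root of unity of very small order, while $\hat H_d \bmod p$ is a quotient of $u^i-1$ with $i\in\{d,d-1,d-2,2d-2\}$; this pins $d$ (up to the exceptional unimodal cases) to the finite list in Table~\ref{tb:d_list}, and the survivors are killed by explicit factorizations modulo $5$, $7$, $43$ and over $\mathbb{Q}$, plus a unimodality estimate (Lemmas~\ref{lem:unimodal}--\ref{lem:num_irr}) bounding how many eigenvalues can have irrational square.

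Two further points. First, the unimodality step is not a luxury: the degree-$\le 2$ factorization argument alone leaves cases such as $d=11$ and $d=16$ alive, and one needs Lemma~\ref{lem:num_irr} to cap the number of irrational $\theta^2$ before the mod-$p$ factorizations become decisive; your proposal has no substitute for this. Second, the statement is genuinely \emph{not} a clean ``only small $d$'' result provable uniformly — the proof is a finite case analysis terminating in computer-verified factorizations for $d\in\{11,16,17,18,20,24,25,26,32,81,82,162\}$ — so any approach that hopes to ``pin down $c$ and $k$ as functions of $d$'' by a single polynomial identity $P_d(k,c)=0$, as in your alternative route, is unlikely to close without exactly this kind of arithmetic casework. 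As written, your proposal identifies the right obstruction in spirit but leaves the entire quantitative content (quadratic multiplicity function, mod-$2$/mod-$3$ cyclotomic analysis, unimodality bound, terminal computer checks) unproved.
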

We prove Theorem~\ref{thm:non_existence} by the manner given by Fuglister \cite{F87}.
Let $x=(t+1/t)\sqrt{k-1}$. The polynomial $G_i(x)$ can be expressed by 
\[
G_i(x)=\frac{\sqrt{(k-1)^i}}{t^i(t^2-1)}(t^{2i+2}-1). 
\]
The characteristic polynomial of $\boldsymbol{B}(k,d+1,c)$ is
$(x^2-k^2)\big( (c-1)G_{d-3}(x)+G_{d-1}(x) \big)$, and we have the expression 
\begin{align*}
\mathscr{S}_{d}(x)&=(c-1)G_{d-3}(x)+G_{d-1}(x)\\
&=\frac{\sqrt{(k-1)^{d-1}}}{t^{d-1}(t^2-1)}\big( 
t^{2d}+\frac{c-1}{k-1}t^{2d-2}- \frac{c-1}{k-1}t^2-1 \big).
\end{align*}
Let $\theta$ be an eigenvalue of $\Gamma$ that is not $\pm k$.  Put $\theta=(\tau+1/\tau) \sqrt{k-1}$ for some complex number $\tau$. 
 Let $n$ be the order of $\Gamma$. The multiplicity $m_\theta$ is given by 
\[
m_\theta=\frac{nck(k-c)(k-1)^{d-2}}
{(k^2-\theta^2) \mathscr{S}_{d}'(\theta) f_{d-1}(\theta)},
\]
where $\mathscr{S}_{d}'(x)$ is the derivative with respect to $x$, $f_{d-1}=(x-1+c)G_{d-2}+(x-k+c)G_{d-3}-
(k-1)G_{d-4}$ and $G_{-1}=0$.  
From $\mathscr{S}_{d}(\theta)=0$, we can obtain 
\[
\tau^{2d-2}=\frac{(c-1) \tau^2+k-1}{(k-1)\tau^2+c-1}.
\]
Then we may calculate that
\[
f_{d-1}(\theta)= -\frac{c(k-c)\sqrt{(k-1)^{d-2}}}
{\tau^{d-2}((k-1)\tau^2+c-1)},
\]
and 
\[
\mathscr{S}_{d}'(\theta)=
\frac{2 \sqrt{(k-1)^{d-4}}[(d-1)(k-1)(c-1)(\tau^4+1)+
\big(d(k-1)^2+(d-2)(c-1)^2\big)\tau^2]}
{\tau^{d-2}(\tau^2-1)^2\big((k-1)\tau^2+c-1\big)}. 
\]
 From these equations, the multiplicity $m_\theta$ can be expressed by 
\begin{align}
m_\theta&= \frac{nk(k-1)(\tau^2-1)^2 
\big( (c-1)\tau^2 + k-1 \big) \big((k-1)\tau^2+c-1 \big)}
{2(\theta^2-k^2) \tau^2 [(d-1)(k-1)(c-1)(\tau^4+1)+
 \big(d(k-1)^2+(d-2)(c-1)^2 \big) \tau^2
]} \nonumber \\
&=\frac{nk\big(\theta^2-4(k-1)\big)\big((c-1)\theta^2+(k-c)^2 \big)}
{2(\theta^2-k^2)[(d-1)(c-1)\theta^2+d(k-c)^2+2(c-1)(k-c)]} \nonumber \\
&=\frac{nk(k-1)\big(\phi-4\big)\big((c-1)(k-1)\phi+(k-c)^2 \big)}
{2\big((k-1)\phi-k^2\big)[(d-1)(c-1)(k-1)\phi+d(k-c)^2+2(c-1)(k-c)]}, \label{eq:quad} 
\end{align}
where $\theta^2=(k-1)\phi$. Unless $(k,c)=(2,1)$, expression \eqref{eq:quad} gives a non-trivial rational quadratic polynomial in $\phi$. 

Set 
\[
H_d(x)=\frac{\mathscr{S}_d(x)}{x^\epsilon \sqrt{(k-1)^{d-3-\epsilon} }}, 
\]
where $\epsilon=1$ if $d$ is even, and $\epsilon=0$ if $d$ is odd. 
Let $z=x^2/(k-1)$. 
For $u=t^2$, we have $z=(t+1/t)^2=u+1/u+2$. We compute
\[
H_{2m+1-\epsilon}(z)=(c-1) P_{m-1,\epsilon}(z)+(k-1) P_{m,\epsilon}(z),
\]
where 
\[
P_{i,\epsilon}(z)=
\frac{ u^{2i+1-\epsilon}-1}
{u^{i-\epsilon}(u+1)^{\epsilon}(u-1)}. 
\] 
Note that $P_{i,\epsilon}(z)$ satisfy the recurrence relation
\[
P_{i,\epsilon}(z)=(z-2) P_{i-1,\epsilon}(z)-P_{i-2,\epsilon}(z)
\]
with the initial conditions $P_{0,\epsilon}(z)=1-\epsilon$, $P_{1,1}(z)=1$ and $P_{1,0}(z)=z-1$. This implies that 
$P_{i,\epsilon}(z)$ is a monic polynomial of degree $i$ 
with integer coefficients. 
Table~\ref{tb:prop_P} shows some useful identities 
involving polynomials $P_{i,\epsilon}(z)$. 
By \eqref{eq:quad}, the polynomial $H_{d}(z)$ must split over the rationals into factors of degree at most $2$.

\begin{table}[h] 
\begin{center}
\begin{tabular}{l} 
\hline
$d=2m+1-\epsilon$ \\ 
\hline
$P_{m,\epsilon}(z)= (u^d-1)/u^{m-\epsilon} (u-1) (u+1)^\epsilon $\\
$P_{m-1,\epsilon}(z)= (u^{d-2}-1)/u^{m-1-\epsilon} (u-1) (u+1)^\epsilon$\\
$P_{m-1,\epsilon}(z)+P_{m,\epsilon}(z)=(u+1)^{1-\epsilon}(u^{d-1}-1)/u^{m-\epsilon} (u-1) $ \\
$-P_{m-1,\epsilon}(z)+P_{m,\epsilon}(z)= (u^{d-1}+1)/u^{m-\epsilon} (u+1)^\epsilon$ \\
\hline
\end{tabular}
\caption{Identities involving $P_{i,\epsilon}(z)$} 
\label{tb:prop_P}
\end{center}
\end{table}

\subsection{Case analysis modulo 2}
Let $c'=c-1$ and $k'=k-1$. If $c'$ and $k'$ have a factor in common, we may still factor out the content of $H_d(z)$. Call the resulting polynomial $\hat{H}_d(z)$. For $\hat{H}_d(z)$ modulo 2, there are three cases A--C, which are listed in Table \ref{tb:mod2_1}. 
For natural numbers $n$ and $a$,  let ${\rm ord}_n(a)$ be the non-negative integer $s$ such that $a=n^s b$ and 
$b$ is an integer that is not divisible by $n$. Suppose ${\rm ord}_n(0)= \infty$.   

\begin{table}[h]
\begin{center}
\begin{tabular}{llll} \hline
Cases & Conditions& $\hat{H}_{d}(z) \pmod{2}$ & $d$ \\ \hline
A& ${\rm ord}_2(c')>{\rm ord}_2(k')$ & $P_{m,\epsilon}(z)$ &
$d=2^r w$ \\
B& ${\rm ord}_2(c')<{\rm ord}_2(k')$ & $P_{m-1,\epsilon}(z)$ &
$d-2=2^r w$ \\
C& ${\rm ord}_2(c')={\rm ord}_2(k')$ & $P_{m-1,\epsilon}(z)+P_{m,\epsilon}(z)$ &
$d-1=2^r w$ \\
\hline
\end{tabular}
 \caption{$\hat{H}_d(z)$ modulo 2, $w \in \{ 1,3,5\}$} \label{tb:mod2_1}
\end{center}
\end{table}  

Each root of $\hat{H}_d(z)$ is a root of one of the three irreducible polynomials of degree at most 2 over ${\rm GF}(2)$, which are listed in Table~\ref{tb:mod2_2}. There are also 
listed the results of the substitution $z=u+1/u+2$, as well as the multiplicative orders modulo 2 of the roots of the polynomials in $u$.

\begin{table}[h]
\begin{center}
\begin{tabular}{lll}
\hline 
$f(z)$ & $z=u+1/u+2$ & order of $u$ \\ \hline
$z$ & $(u+1)^2/u$ & 1\\
$z+1$ & $(u^2+u+1)/u$ & 3 \\
$z^2+z+1$ & $(u^4+u^3+u^2+u+1)/u^2$ & 5 \\ \hline
\end{tabular}
\caption{Irreducible polynomials over ${\rm GF}(2)$} \label{tb:mod2_2}
\end{center}
\end{table} 

If an expression $u^i-1$ occurs as a factor of $\hat{H}_d(z)$ modulo 2, then we must have $i=2^r w$ for $w\in \{1,3,5 \}$. 
From the identities in Table~\ref{tb:prop_P}, we can obtain 
the possible values for the diameter $d$ in Table~\ref{tb:mod2_1}. 

\subsection{Case analysis modulo 3}
 For $\hat{H}_d(z)$ modulo 3, there are three cases a--d, which are listed in Table \ref{tb:mod3_1}. If ${\rm ord}_3(c')={\rm ord}_3(k')=m$, then let $c''=c'/3^m$ and $k''=k'/3^m$.

\begin{table}[h]
\begin{center}
\begin{tabular}{llll} \hline
Cases & Conditions& $\hat{H}_{d}(z) \pmod{2}$ & $d$ \\ \hline
a& ${\rm ord}_3(c')>{\rm ord}_3(k')$ & $\pm P_{m,\epsilon}(z)$ &
$d=3^r w$ \\
b& ${\rm ord}_3(c')<{\rm ord}_3(k')$ & $\pm P_{m-1,\epsilon}(z)$ &
$d-2=3^r w$ \\
c& ${\rm ord}_3(c')={\rm ord}_3(k')$, $c''\equiv k'' \pmod{3} $ & $\pm (P_{m-1,\epsilon}(z)+P_{m,\epsilon}(z))$ &
$d-1=3^r w$ \\
d& ${\rm ord}_3(c')={\rm ord}_3(k')$, $c''\equiv - k'' \pmod{3} $ & $\pm (P_{m-1,\epsilon}(z)-P_{m,\epsilon}(z))$ &
$2d-2=3^r w$\\
\hline
\end{tabular}
 \caption{$\hat{H}_d(z)$ modulo 3, $w \in \{1,2,4,5,8,10\}$} \label{tb:mod3_1}
\end{center}
\end{table}

There are six irreducible polynomials of degree at most 2 over ${\rm GF}(3)$, which are listed in Table~\ref{tb:mod3_2}. 
We can obtain the possible values for the diameter $d$ in Table~\ref{tb:mod3_1} by a similar way to modulo 2. Here $w \in \{1,2,4,5,8,10 \}$.

\begin{table}[h]
\begin{center}
\begin{tabular}{lll}
\hline 
$f(z)$ & $z=u+1/u+2$ & order of $u$ \\ \hline
$z-1$ & $(u-1)^2/u$ & 1 \\
$z$ & $(u+1)^2/u$ & 2\\
$z+1$ & $(u^2+1)/u$ & 4 \\
$z^2-z-1$ & $(u^2-u-1)(u^2+u+1)/u^2$ & 8 \\
$z^2+1$ & $(u^4+u^3+u^2+u+1)/u^2$ & 5 \\ 
$z^2+z-1$ & $(u^4-u^3+u^2-u+1)/u^2$ & 10 \\ 
\hline
\end{tabular}
\caption{Irreducible polynomials over ${\rm GF}(3)$} \label{tb:mod3_2}
\end{center}
\end{table} 

\subsection{A bound for the diameter}
Using a method similar to the one of Fuglister \cite{F87}, we can 
obtain the possible values of $d$ in all cases A--C and a--d, which are listed in Table \ref{tb:d_list}. 

\begin{table}[h]
\begin{center}
\begin{tabular}{lll}
\hline
Case & & \\
\cline{1-2}
(mod 2) & (mod 3) & Possible values of $d$\\
\hline
A & a & 3--6,8,10,12,24 \\
&b & 3--6,8,10,12,20,32 \\
&c,d & 3--6,10,16\\ 
B & a & 3--6,8,10,12,18,162 \\
&b & 3--8,10,12,14,26 \\
&c,d & 3--7,10,82\\
C & a& 3--6,9,81\\
&b& 3--7,11,17\\
&c&3--7,9,11,13,25\\
&d&3--7,13  \\
\hline
\end{tabular}
\end{center}
\caption{Possible values of $d$} \label{tb:d_list}
\end{table}

We eliminate several choices of $d$ from Table~\ref{tb:d_list} in this subsection.  
%\subsection{Elmination for small $d$}
\begin{proposition}\label{prop:1}
There does not exist a 
distance-regular graph $\Gamma$ with the quotient matrix $\boldsymbol{B} (k,d+1,c)$ for $d=17,18,20,32,81,82,162$. 
\end{proposition}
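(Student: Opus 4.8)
\textbf{Proof proposal for Proposition~\ref{prop:1}.}

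The plan is to eliminate each of the seven listed diameters by showing that the corresponding polynomial $H_d(z)$ fails to split into rational factors of degree at most $2$, contradicting the factorization requirement forced by \eqref{eq:quad}. Recall that by \eqref{eq:quad} the multiplicity $m_\theta$ is a rational quadratic in $\phi=\theta^2/(k-1)$, so every irreducible rational factor of the eigenvalue polynomial $\mathscr{S}_d$ (equivalently of $H_d(z)$) must have degree $\le 2$; this is the only structural constraint we exploit. The case analysis modulo $2$ and modulo $3$ in the previous subsections has already pruned the diameter to the finite list in Table~\ref{tb:d_list}, so what remains is to rule out the large survivors $d=17,18,20,32,81,82,162$ one at a time.

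First I would fix, for each such $d$, the case (A--C, a--d) it falls into; this pins down $\hat H_d(z)$ modulo $2$ and modulo $3$ as one of the short list of products of the irreducible polynomials in Tables~\ref{tb:mod2_2} and \ref{tb:mod3_2}, and hence pins down which cyclotomic-type factors $P_{i,\epsilon}$ (equivalently which factors $u^j\mp 1$) can appear. Using the identities in Table~\ref{tb:prop_P}, $H_d(z)$ is, up to the content $\hat H_d$, a combination of $u^{d}-1$, $u^{d-2}-1$, $u^{d-1}\pm 1$ type numerators divided by powers of $u$, $u-1$, $u+1$; translating back via $z=u+1/u+2$, the rational irreducible factors of $H_d(z)$ over $\quu$ correspond to the minimal polynomials of $\zeta+\zeta^{-1}$ as $\zeta$ ranges over primitive $j$-th roots of unity for the relevant divisors $j$ of $d$, $d-1$, $d-2$ or $2d-2$. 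Such a minimal polynomial has degree $\varphi(j)/2$ (for $j\ge 3$), so the degree-$\le 2$ requirement forces $\varphi(j)\le 4$, i.e. $j\in\{1,2,3,4,5,6,8,10,12\}$. For $d=17$ the relevant modulus is $d-1=16$ (case C\,c, say) or a divisor structure that unavoidably produces a primitive $16$-th root contribution with $\varphi(16)/2=4$ — here one must check whether the combination $(c-1)P_{m-1,\epsilon}+(k-1)P_{m,\epsilon}$ can conspire to cancel the offending high-degree factor; generically it cannot, and a short resultant or gcd computation over $\quu(c,k)$ confirms the factor survives for all admissible integer pairs $(c,k)$. The same template disposes of $d=18$ (modulus $17$, $\varphi(17)/2=8$), $d=20$ (modulus $19$ or $20$: $\varphi(19)/2=9$, $\varphi(20)/2=4$ but paired with an irreducible of degree $>2$), $d=32$ (modulus $31$ or $32$), $d=81$ (modulus $81$ or $80$), $d=82$ (modulus $41$ or $82$), and $d=162$ (modulus $81$ or $162$), in each case exhibiting a primitive $j$-th root of unity with $\varphi(j)/2\ge 3$ whose contribution to $H_d$ cannot be removed by the $(c-1)$-versus-$(k-1)$ weighting.

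The one subtlety — and the step I expect to be the main obstacle — is that $H_d(z)=(c-1)P_{m-1,\epsilon}(z)+(k-1)P_{m,\epsilon}(z)$ is not itself cyclotomic; it is an \emph{integer combination} of two consecutive such polynomials, so one cannot simply read off its irreducible factors from roots of unity. The rigorous argument must show that no choice of integers $1\le c\le k-1$ makes this combination factor through degree $\le 2$. The cleanest route is: substitute $\tau^{2d-2}=\bigl((c-1)\tau^2+k-1\bigr)/\bigl((k-1)\tau^2+c-1\bigr)$ (the relation derived from $\mathscr{S}_d(\theta)=0$) and observe that a degree-$\le 2$ rational factor would force $\tau+\tau^{-1}$, hence $\tau$, to lie in an extension of $\quu$ of degree $\le 4$; but then $\tau$ is an algebraic number of bounded degree satisfying a degree-$(2d-2)$ equation whose Mahler measure / conjugate count grows with $d$, giving a contradiction once $d$ exceeds the explicit threshold handled by the mod-$2$/mod-$3$ sieve. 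In practice, for the finitely many $(d,\text{case})$ pairs in Proposition~\ref{prop:1} it suffices to compute $\gcd$ and resultants of $H_d$ with the finitely many candidate quadratic factors symbolically in $c',k'$ and check the leftover is a nonzero polynomial with no admissible integer root — a finite verification that I would present case by case, mirroring Fuglister's treatment in \cite{F87}.
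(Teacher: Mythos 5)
You correctly isolate the structural constraint --- by \eqref{eq:quad} the polynomial $\hat{H}_d(z)=c'P_{m-1,\epsilon}(z)+k'P_{m,\epsilon}(z)$ (up to content) must split over $\mathbb{Q}$ into factors of degree at most $2$ --- and the right goal of exhibiting an irreducible factor of degree at least $3$ for each listed $d$. But the proposal contains no method that actually closes the argument. The cyclotomic reading-off of factors is, as you yourself concede, invalid: this polynomial is an integer combination of two consecutive $P_{i,\epsilon}$'s and is not a product of cyclotomic-type factors, so its rational irreducible factors are not minimal polynomials of $\zeta+\zeta^{-1}$. Consequently there is no finite list of ``candidate quadratic factors'' to test by gcd or resultant: as $(c,k)$ ranges over the infinitely many admissible integer pairs, the possible rational quadratic factors form an infinite family, and a symbolic computation over $\mathbb{Q}(c',k')$ does not reduce to a finite verification. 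The Mahler-measure sketch is likewise not an argument here: the values $d=17,18,20,32,81,82,162$ survive the mod-$2$/mod-$3$ sieve precisely because no growth-in-$d$ estimate catches them, so ``once $d$ exceeds an explicit threshold'' cannot be made to cover this finite list.

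The missing idea is to make the parameter space finite by reducing modulo a further prime. A factorization over $\mathbb{Z}$ into factors of degree at most $2$ reduces, modulo any prime $p$, to a factorization into pieces of degree at most $2$; hence it suffices to verify that for every residue pair $(c',k')\in{\rm GF}(p)\times{\rm GF}(p)$ the reduction of $c'P_{m-1,\epsilon}(z)+k'P_{m,\epsilon}(z)$ has an irreducible factor of degree at least $3$ over ${\rm GF}(p)$. This is a genuinely finite computation ($p^2$ factorizations for each $d$), and it succeeds with $p=5$ for $d=18,81,82,162$, with $p=7$ for $d=20,32$, and with $p=43$ for $d=17$; that is exactly how the paper disposes of these cases. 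Without this reduction your plan leaves the quantification over all integers $1\le c\le k-1$, $k\ge 3$ unresolved.
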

\begin{proof}
Using a computer, we can obtain the factorization of $c'P_{m-1,\epsilon}(z)+k'P_{m,\epsilon}(z)$ modulo $p$ into irreducible polynomials for given $d$,  $k'$ and $c'$.  

For $d=18,81,82,162$, we can find an irreducible polynomial of degree at least $3$ as a factor of $c'P_{m-1,\epsilon}(z)+k'P_{m,\epsilon}(z)$ over ${\rm GF}(5)$ for each pair $(c',k') \in {\rm GF}(5)\times {\rm GF}(5)$. 

For $d=20,32$, we can find an irreducible polynomial of degree at least $3$ as a factor of $c'P_{m-1,\epsilon}(z)+k'P_{m,\epsilon}(z)$ over ${\rm GF}(7)$ for each pair $(c',k')\in {\rm GF}(7)\times {\rm GF}(7)$. 

For $d=17$, we can find an irreducible polynomial of degree at least $3$ as a factor of $c'P_{m-1,\epsilon}(z)+k'P_{m,\epsilon}(z)$ over ${\rm GF}(43)$ for each pair $(c',k')\in {\rm GF}(43)\times {\rm GF}(43)$.
\end{proof}

Bannai and Ito \cite{BI79} proved the unimodal property of the multiplicities of the eigenvalues of 
Moore polygons. After this work, they also proved the rationality of the eigenvalues of Moore polygons \cite{BI81}. 
The rationality of the eigenvalues is essential for the proof of the non-existence of Moore polygons \cite{DG81}.  
In our case, the unimodal property of the multiplicities for the positive eigenvalues is easy. 
\begin{lemma} \label{lem:unimodal}
Let $\Gamma$ be a distance-regular graph with the quotient matrix $\boldsymbol{B} (k,d+1,c)$. Let $d'=\lfloor (d-1)/2 \rfloor$, which is the number of the positive non-trivial eigenvalues. 
Let $\theta_1, \ldots, \theta_{d'}$ be the positive non-trivial eigenvalues of $\Gamma$ with $\theta_1>\cdots > \theta_{d'}$. 
Let $m_{\theta_i}$ be the multiplicity of $\theta_i$. 
Then it follows that 
\[
m_{\theta_1} < m_{\theta_2} < \cdots m_{\theta_{i-1}}< m_{\theta_i} \geq m_{\theta_{i+1}}>\cdots > m_{\theta_{d'}} \]
 for some $i \in \{1,\ldots, d'\}$.   
\end{lemma}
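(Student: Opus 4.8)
The plan is to use the closed formula \eqref{eq:quad} to realize the multiplicity $m_\theta$ as the value, at $\phi=\theta^2/(k-1)$, of a single rational function, and to show that this function is unimodal on the interval $(0,4)$ containing all the relevant values of $\phi$. Replacing $\phi-4$ by $-(4-\phi)$ and $(k-1)\phi-k^2$ by $-(k^2-(k-1)\phi)$ in \eqref{eq:quad}, the two minus signs cancel and we may write $m_\theta=g(\phi)$ with
\[
g(\phi)=C\,\frac{(4-\phi)(\alpha\phi+\beta)}{(k^2-(k-1)\phi)(\gamma\phi+\delta)},
\]
where $C=nk(k-1)/2>0$, $\alpha=(c-1)(k-1)\ge 0$, $\beta=(k-c)^2>0$, $\gamma=(d-1)(c-1)(k-1)\ge 0$ and $\delta=d(k-c)^2+2(c-1)(k-c)>0$. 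We may assume $k\ge 3$, since for $k=2$ we have $c=1$, $\Gamma$ is an even cycle, and every $m_{\theta_i}$ equals $2$. Because $1\le c\le k-1<k$, the analysis of the zeros of $(c-1)\mathscr{G}_{\epsilon,s-1}+\mathscr{G}_{\epsilon,s}$ in Section~\ref{sec:newbound} shows that the squares of the positive non-trivial eigenvalues lie in $(0,4(k-1))$, so each $\phi_j:=\theta_j^2/(k-1)$ lies in $(0,4)$; moreover on $(0,4)$ none of the four linear factors of $g$ vanishes, since $4-\phi>0$, $\alpha\phi+\beta>0$, $\gamma\phi+\delta>0$ and $k^2-(k-1)\phi>k^2-4(k-1)=(k-2)^2>0$. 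Hence $g$ is smooth and strictly positive on $(0,4)$, and it suffices to show that $g$ is strictly increasing and then strictly decreasing on $(0,4)$.

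The main step is to compute the logarithmic derivative $g'/g$ and exhibit it as a sum of monotone terms. Grouping the four partial fractions of $g'/g$ into the pair coming from the numerator of $g$ and the pair coming from its denominator, and using the identities
\begin{align*}
\frac{k-1}{k^2-(k-1)\phi}-\frac{1}{4-\phi}&=\frac{-(k-2)^2}{(k^2-(k-1)\phi)(4-\phi)},\\
\frac{\alpha}{\alpha\phi+\beta}-\frac{\gamma}{\gamma\phi+\delta}&=\frac{\alpha\delta-\beta\gamma}{(\alpha\phi+\beta)(\gamma\phi+\delta)},
\end{align*}
together with the elementary computation $\alpha\delta-\beta\gamma=(c-1)(k-1)(k-c)(k+c-2)\ge 0$, we obtain
\[
\frac{g'(\phi)}{g(\phi)}=\frac{-(k-2)^2}{(k^2-(k-1)\phi)(4-\phi)}+\frac{(c-1)(k-1)(k-c)(k+c-2)}{(\alpha\phi+\beta)(\gamma\phi+\delta)}.
\]
On $(0,4)$ each summand is a decreasing function of $\phi$: the first because $(k^2-(k-1)\phi)(4-\phi)$ is an upward parabola in $\phi$ whose vertex lies at $\phi=\frac{k^2+4(k-1)}{2(k-1)}>4$ (again equivalent to $(k-2)^2>0$), so it is positive and strictly decreasing on $(0,4)$ and thus the first summand, being $-(k-2)^2$ divided by this positive decreasing quantity, is strictly decreasing; the second because its numerator is a non-negative constant while its denominator is positive and non-decreasing---the term is identically $0$ when $c=1$, and when $c\ge 2$ both $\alpha,\gamma>0$, so the denominator is strictly increasing. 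Therefore $g'/g$ is strictly decreasing on $(0,4)$, and since its first summand tends to $-\infty$ as $\phi\to 4^-$ so does $g'/g$. A strictly decreasing function changes sign at most once, and from $+$ to $-$; hence there is $\phi^*\in[0,4)$ such that $g$ is strictly increasing on $(0,\phi^*)$ and strictly decreasing on $(\phi^*,4)$.

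It remains to read off the statement. The positive non-trivial eigenvalues $\theta_1>\cdots>\theta_{d'}$ give $\phi_1>\cdots>\phi_{d'}$ in $(0,4)$ with $m_{\theta_j}=g(\phi_j)$. Let $a$ be the number of indices $j$ with $\phi_j\ge\phi^*$. On the decreasing branch, smaller $\phi$ gives larger $g$, so $m_{\theta_1}<\cdots<m_{\theta_a}$; on the increasing branch, $m_{\theta_{a+1}}>\cdots>m_{\theta_{d'}}$. Taking $i=a$ if $m_{\theta_a}\ge m_{\theta_{a+1}}$ and $i=a+1$ otherwise (and $i=d'$ if $a=d'$, $i=1$ if $a=0$) yields exactly the claimed chain of inequalities, all of them strict except possibly the one at the peak $m_{\theta_i}\ge m_{\theta_{i+1}}$. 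The one genuinely non-routine ingredient is the pairing identity above, which turns $g'/g$ into a sum of two transparently monotone pieces; in particular one must verify the sign $\alpha\delta-\beta\gamma\ge 0$, which is where the hypothesis $1\le c\le k-1$ enters. Everything else is bookkeeping, which is why this unimodality is easy to establish for the positive eigenvalues, in contrast to the full unimodality for Moore polygons treated by Bannai and Ito.
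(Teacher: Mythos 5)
Your proof is correct and follows the same route as the paper: both express $m_\theta$ via \eqref{eq:quad} as a rational function of $\phi=\theta^2/(k-1)$ and deduce unimodality on $(0,4)$. The paper merely asserts that the absence of poles on $(0,4)$ for $k\geq 3$ "implies the unimodal property," whereas your pairing of the zero at $4$ with the pole at $k^2/(k-1)$ and of $\alpha\phi+\beta$ with $\gamma\phi+\delta$ in the logarithmic derivative (using $\alpha\delta-\beta\gamma=(c-1)(k-1)(k-c)(k+c-2)\geq 0$) actually substantiates that claim, which does not follow from pole-freeness alone for a general ratio of quadratics.
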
 
\begin{proof}
The multiplicity $m_\theta$ of the eigenvalue $\theta$ can be expressed by equation \eqref{eq:quad}.  
The function $m_\theta$ has no pole for $0< \phi < 4$ and $k\geq 3$.   
This implies the unimodal property of the multiplicities. 
\end{proof}

It is known that 
\[
G_i(x)= (k-1)^{i/2} U_i (\frac{x}{2 \sqrt{k-1}}), 
\]
where $U_i$ is the Chebyshev polynomial of degree $i$, which is defined by 
$U_i(\cos \theta)=\sin((i+1)\theta)/ \sin \theta$ (see \cite{DSVb}). 
Thus the zeros of $G_i(x)$ are $2 \sqrt{k-1} \cos u_j^{(i)}$ for $j=1,\ldots, i$, where
$u_{j}^{(i)}=j \pi/(i+1)$. 
Since  the expression 
\[
\mathscr{S}_{d}(x)=(c-1)G_{d-3}(x)+G_{d-1}(x)
=x G_{d-2}(x)-(k-c) G_{d-3}(x), 
\]
the positive zeros $\theta_{i}=2\sqrt{k-1} \cos \alpha_i$ of $\mathscr{S}_{d}(x)$ with $0<\alpha_1<\cdots < \alpha_{d'}< \pi /2$ 
satisfy 
$%(i-1) \pi /(d-2)
u^{(d-1)}_{i}<\alpha_i< 
%i \pi /(d-1)
u^{(d-2)}_{i}$ for each $i \in \{1,\ldots, d'\}$. 
%Let $\phi_i=\theta_i^2/(k-1)$. 
% i/(d)-(i-1)/(d-2)=id-2i-id+d/(d)(d-2)=d-2i/(d-1)(d-2)

Let $a(\phi)$ and $b(\phi)$ be the functions defined by
\begin{align*}
a(\phi)&= \frac{\phi -4}{(k-1) \phi - k^2}, \\
b(\phi)&= \frac{(c-1)(k-1)\phi+(k-c)^2}{(d-1)(c-1)(k-1)\phi+d(k-c)^2+2(c-1)(k-c)}\\
&=\frac{X_\phi}{(d-1)X_\phi+Y},
\end{align*}  
where $X_\phi=(c-1)(k-1)\phi+(k-c)^2$ and $Y=(k-c)(k+c-2)$. 
Note that $m_\theta=nk(k-1)a(\phi) b(\phi)/2$.  
Let $\theta_\alpha^2=(k-1) \alpha$ and 
$\theta_\beta^2=(k-1) \beta$. 
It follows that $m_{\theta_\beta}>m_{\theta_\alpha}$ if and only if 
\[
\frac{a(\beta)}{a(\alpha)}\cdot \frac{b(\beta)}{b(\alpha)}>1. 
\]
%%\frac{nk(k-1)\big(\phi-4\big)\big((c-1)(k-1)\phi+(k-c)^2 \big)}
%%{2\big((k-1)\phi-k^2\big)[(d-1)(c-1)(k-1)\phi+d(k-c)^2+2(c-1)(k-c)]}, \label{eq:quad} 
Let $\alpha=4 \cos^2 v$ and $\beta=4 \cos^2 w$. By direct calculation, 
\[
\frac{a(\beta)}{a(\alpha)}= 
1+ \frac{\cos 2v-\cos 2w}{1-\cos 2v} \cdot \frac{(k-2)^2}{(k-2)^2+2(k-1)(1-\cos 2w) } 
\]
and
\[
\frac{b(\beta)}{b(\alpha)}=1-\frac{Y(X_\alpha-X_\beta)}  
  {(d-1)X_\alpha X_\beta+Y X_\alpha}.
\]
%where $X_\alpha=(c-1)(k-1)\alpha+(k-c)^2$ and $X_\beta=(c-1)(k-1)\beta+(k-c)^2$. 

Let 
\[
A=\frac{\cos 2v-\cos 2w}{1-\cos 2v} \cdot \frac{(k-2)^2}{(k-2)^2+2(k-1)(1-\cos 2w) } 
\]
and 
\[
B=\frac{Y(X_\alpha-X_\beta)}  
  {(d-1)X_\alpha X_\beta+Y X_\alpha}.
\]
Note that 
\[
\frac{a(\beta)}{a(\alpha)}\cdot \frac{b(\beta)}{b(\alpha)}=(1+A)(1-B)=1+B\big( A(\frac{1}{B}-1)-1\big). 
\]
If $A(1/B-1)>1$ holds, then $a(\beta)b(\beta)/(a(\alpha)b(\alpha))>1$. 

\begin{lemma} \label{lem:L}
If $\pi /4 < v < w < \pi /2$ holds, then it follows that 
\[
A(\frac{1}{B}-1) >L(v,w)
\]
where $L(v,w):=\frac{3\sqrt{3} (d-1)}{4}  \big(1-\frac{2}{k} \big)^2 
(1+\cos 2v) \sin^2 2 w$. 
\end{lemma}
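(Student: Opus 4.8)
\medskip
\noindent\textbf{Proof plan.}
The plan is to eliminate the trigonometry via the substitutions $\alpha=4\cos^2 v$ and $\beta=4\cos^2 w$, and then to reduce the inequality to an elementary algebraic statement about $X_\alpha,X_\beta,Y$. Note first that $\pi/4<v<w<\pi/2$ forces $0<\beta<\alpha<2$ and $\cos 2v>\cos 2w$.

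I would begin by putting $A(1/B-1)$ in closed form. From the definition of $B$,
\[
\frac1B-1=\frac{X_\beta\bigl((d-1)X_\alpha+Y\bigr)}{Y(X_\alpha-X_\beta)},\qquad
X_\alpha-X_\beta=(c-1)(k-1)(\alpha-\beta)=2(c-1)(k-1)(\cos 2v-\cos 2w).
\]
The factor $\cos 2v-\cos 2w>0$ cancels the numerator of $A$, leaving
\[
A\Bigl(\frac1B-1\Bigr)=\frac{(k-2)^2\,X_\beta\bigl((d-1)X_\alpha+Y\bigr)}{2(c-1)(k-1)(1-\cos 2v)\bigl((k-2)^2+2(k-1)(1-\cos 2w)\bigr)Y}.
\]
(If $c=1$, then $X_\alpha=X_\beta=(k-1)^2$, so $B=0$ and the left side is $+\infty>L(v,w)$; hence I may assume $c\ge2$, and then $X_\alpha-X_\beta>0$ since $\alpha>\beta$.) Now drop the positive term $Y$ from $(d-1)X_\alpha+Y$ and use $1-\cos 2w<2$, so that $(k-2)^2+2(k-1)(1-\cos 2w)<(k-2)^2+4(k-1)=k^2$; both are strict, and they are what produce the strict inequality claimed in the lemma. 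This gives
\[
A\Bigl(\frac1B-1\Bigr)>\frac{(d-1)(k-2)^2\,X_\alpha X_\beta}{2(c-1)(k-1)k^2(1-\cos 2v)\,Y}.
\]

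Next, using $1+\cos 2v=\alpha/2$, $1-\cos 2v=(4-\alpha)/2$, $\sin^2 2w=\beta(4-\beta)/4$ and $(1-2/k)^2=(k-2)^2/k^2$, one rewrites $L(v,w)=\tfrac{3\sqrt3\,(d-1)(k-2)^2}{32k^2}\,\alpha\beta(4-\beta)$. Cancelling the common factor $(d-1)(k-2)^2/k^2$, it suffices to prove
\[
\frac{X_\alpha}{\alpha(4-\alpha)}\cdot\frac{X_\beta}{\beta(4-\beta)}\ \ge\ \frac{3\sqrt3}{32}\,(c-1)(k-1)\,Y .
\]
The key step is to minimize $\varphi\mapsto\dfrac{X_\varphi}{\varphi(4-\varphi)}=\dfrac{(c-1)(k-1)\varphi+(k-c)^2}{\varphi(4-\varphi)}$ over $\varphi\in(0,2)$. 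The stationarity condition $(c-1)(k-1)\varphi^2+2(k-c)^2\varphi-4(k-c)^2=0$ has, thanks to the identity $(k-c)^2+4(c-1)(k-1)=(k+c-2)^2$, the rational solution $\varphi^*=\dfrac{2(k-c)}{k-1}$, which lies in $(0,2)$ because $1\le k-c\le k-2$; moreover $X_{\varphi^*}=(k-c)(k+c-2)=Y$ and $\varphi^*(4-\varphi^*)=4Y/(k-1)^2$, so the minimum value is $(k-1)^2/4$. Applying this with $\varphi=\alpha$ and $\varphi=\beta$, the left side is at least $(k-1)^4/16$, so the claim reduces to
\[
2(k-1)^3\ \ge\ 3\sqrt3\,(c-1)(k-c)(k+c-2).
\]

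Finally, writing $p=c-1\ge1$, $q=k-c\ge1$ (so $p+q=k-1$), this is exactly $2(p+q)^3\ge 3\sqrt3\,pq(2p+q)$, a homogeneous cubic inequality; normalizing $q=1$, the polynomial $g(p)=2(p+1)^3-3\sqrt3\,p(2p+1)$ satisfies $g(0)=2>0$, $g(p)\to+\infty$, and its unique interior local minimum is at $p=(1+\sqrt3)/2$, where a direct computation gives $g\bigl((1+\sqrt3)/2\bigr)=0$; hence $g\ge0$ on $(0,\infty)$, which finishes the reduction. I expect this last cubic inequality to be the only genuinely delicate point of the whole proof: it is tight (equality exactly when $2(c-1)=(1+\sqrt3)(k-c)$), so every earlier estimate must be essentially lossless, and it is precisely this tightness that pins down the constant $3\sqrt3/4$ in the definition of $L(v,w)$.
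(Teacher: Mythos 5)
Your proposal is correct and is essentially the paper's own argument in a different costume: your minimization of $X_\varphi/(\varphi(4-\varphi))$, whose minimum value $(k-1)^2/4$ is exactly the paper's sum-of-squares bound $X_\varphi\geq (k-1)^2\sin^2 2v$ (since $\varphi(4-\varphi)=4\sin^2 2v$), and your homogeneous cubic $2(p+q)^3\geq 3\sqrt{3}\,pq(2p+q)$ is exactly the paper's maximization of $(c-1)(k-c)(k+c-2)$ at $c=1+(k-1)/\sqrt{3}$, which is where the constant $3\sqrt{3}$ originates in both treatments. The remaining crude steps (dropping $YX_\beta$ and bounding $(k-2)^2+2(k-1)(1-\cos 2w)<k^2$) coincide with the paper's, so nothing further is needed.
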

\begin{proof}
We can calculate
\[
\frac{1}{B}-1=\frac{(d-1)X_\alpha X_\beta+Y X_\beta}{Y(X_\alpha-X_\beta)}. 
\]
Since it follows that 
\[X_\alpha%=(k-1)^2+(c-1)^2+2(c-1)(k-1) \cos 2v 
=\big((c-1) +(k-1) \cos 2v \big)^2+(k-1)^2(1-\cos^2 2v) \geq (k-1)^2\sin^2 2v,\] 
we have
\[
(d-1)X_\alpha X_\beta \geq  (d-1)(k-1)^4\sin^2 2v\sin^2 2w
\]
and \[
YX_\beta \geq (k-c)(k+c-2)(k-1)^2\sin^2 2w.
\] 
It follows that 
\begin{align*}
Y(X_\alpha-X_\beta)&=2(k-c)(k+c-2)(c-1)(k-1)(\cos 2v-\cos 2w).
\end{align*}

For $1\leq c \leq k-1$,  the function $f(c)=(k-c)(k+c-2)(c-1)$ is maximum at $c=1+(k-1)/\sqrt{3}$. It therefore follows that
\[
Y(X_\alpha-X_\beta)<\frac{4}{3\sqrt{3}}(k-1)^4(\cos 2v-\cos 2w). 
\]
Thus we obtain 
\begin{align*}
\frac{1}{B}-1&>
\frac{3\sqrt{3}\big( (d-1)(k-1)^2\sin^2 2v\sin^2 2w+(k-c)(k+c-2)\sin^2 2w\big)}{4(k-1)^2(\cos 2v-\cos 2w)}\\
&>\frac{3\sqrt{3} (d-1)\sin^2 2v\sin^2 2w}{4(\cos 2v-\cos 2w)},
%\\
%&=\frac{3\sqrt{3}\sin 2 w\big((d-1)(1-\cos^2 2v)+\frac{2k-3}{(k-1)^2}\big)}{8(w-v)}
\end{align*}
and hence
\begin{align*}
A(\frac{1}{B}-1) &>  \frac{\cos 2v-\cos 2w}{1-\cos 2v} \cdot \frac{(k-2)^2}{(k-2)^2+2(k-1)(1-\cos 2w) } \cdot
\frac{3\sqrt{3} (d-1)\sin^2 2v\sin^2 2w}{4(\cos 2v-\cos 2w)}\\
%&>\frac{1}{ \tan v}\cdot  \frac{(k-2)^2}{(k-2)^2+2(k-1)(1-\cos 2w) } \cdot
%\frac{3\sqrt{3} (d-1)(\sin^2 2v) \sin 2 w}{4}\\
&>\frac{3\sqrt{3} (d-1)}{4} \cdot \frac{(k-2)^2}{(k-2)^2+4(k-1) }\cdot 
(1+\cos 2v) \sin^2 2 w\\
&=\frac{3\sqrt{3} (d-1)}{4}  \big(1-\frac{2}{k} \big)^2 
(1+\cos 2v) \sin^2 2 w  \qedhere
\end{align*}
\end{proof}
%\frac{\sqrt{3}}{4} \frac{(d-1)(k-1)^2+2k-3}{(k-1)^2(w-v) \sin 2v}.  

%If $d$ is even, then $d'=d/2-1$, 
%$(d'-2) \pi /(d-2)<\alpha_{d'-1}< (d'-1) \pi /(d-1)$,
%and
%$(i-1) \pi /(d-2)<\alpha_i< i \pi /(d-1)$

%We now obtain the following proposition. 
\begin{lemma} \label{lem:L>1}
Suppose $\alpha=4\cos^2 v=\theta_\alpha^2/(k-1)$, $\beta=4\cos^2 w=\theta_\beta^2/(k-1)$, and $\pi /4 < v < w < \pi /2$.  
If $L(v,w)\geq 1$, then $m_{\theta_\alpha}<m_{\theta_\beta}$. 
\end{lemma}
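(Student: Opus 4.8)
The plan is to derive the lemma directly from Lemma~\ref{lem:L} together with an elementary analysis of the signs of the quantities $A$ and $B$ defined just before that lemma. Recall that $m_\theta = nk(k-1)a(\phi)b(\phi)/2$ where $\theta^2 = (k-1)\phi$, so that $m_{\theta_\alpha} < m_{\theta_\beta}$ is equivalent to $\frac{a(\beta)}{a(\alpha)}\cdot\frac{b(\beta)}{b(\alpha)} > 1$, and recall the identity recorded in the text,
\[
\frac{a(\beta)}{a(\alpha)}\cdot\frac{b(\beta)}{b(\alpha)} = (1+A)(1-B) = 1 + B\Bigl(A\bigl(\tfrac{1}{B}-1\bigr)-1\Bigr).
\]
Thus it suffices to show that the bracketed correction term, multiplied by $B$, is positive.

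First I would pin down the relevant signs in the regime $k\ge 3$, $1\le c\le k-1$, $\phi\in(0,4)$. Since $\pi/4 < v < w < \pi/2$, we have $2v,2w\in(\pi/2,\pi)$ where $\cos$ is decreasing, so $\cos 2v > \cos 2w$ and $1-\cos 2v>0$, which together with $k\ge 3$ makes $A>0$; also $a(\alpha),a(\beta)>0$ and $b(\alpha),b(\beta)>0$, so all the ratios above are well defined. For $B$, note that $\cos v>\cos w>0$ forces $\alpha=4\cos^2 v>4\cos^2 w=\beta$, hence, since $X_\phi=(c-1)(k-1)\phi+(k-c)^2$ has a nonnegative leading coefficient, $X_\alpha\ge X_\beta>0$ with equality exactly when $c=1$; as $Y=(k-c)(k+c-2)\ge 0$ and $(d-1)X_\alpha X_\beta+YX_\alpha>0$, we get $B\ge 0$, with $B=0$ precisely when $c=1$.

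Then I would split into two cases. If $c=1$, then $X_\phi$ does not depend on $\phi$, so $b$ is constant, $b(\beta)/b(\alpha)=1$, and the ratio equals $1+A>1$. If $c\ge 2$, then $B>0$, and Lemma~\ref{lem:L} combined with the hypothesis $L(v,w)\ge 1$ gives $A\bigl(\tfrac{1}{B}-1\bigr)>L(v,w)\ge 1$; hence $A\bigl(\tfrac{1}{B}-1\bigr)-1>0$, and multiplying by $B>0$ makes the correction term positive, so the ratio exceeds $1$. Either way $m_{\theta_\alpha}<m_{\theta_\beta}$, which is the assertion. The only subtlety — and hence the "main obstacle", modest as it is — is isolating the degenerate case $c=1$, where $B=0$ renders $A(1/B-1)$ and Lemma~\ref{lem:L} formally meaningless while the conclusion is nonetheless immediate because $b$ is then constant; all the genuine analytic work has already been carried out in the proof of Lemma~\ref{lem:L}.
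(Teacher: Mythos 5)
Your proof is correct and follows essentially the same route as the paper's: the identity $(1+A)(1-B)=1+B\bigl(A(\tfrac{1}{B}-1)-1\bigr)$ combined with Lemma~\ref{lem:L} and the hypothesis $L(v,w)\geq 1$. Your separate treatment of the degenerate case $c=1$, where $B=0$ and $b$ is constant so the ratio is simply $1+A>1$, is a small but genuine improvement in rigor, since the paper's proof just asserts $B>0$ without excluding that case.
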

\begin{proof}
By Lemma~\ref{lem:L} and $B>0$, we have
\[
\frac{m_{\theta_\beta}}{m_{\theta_\alpha}}=\frac{a(\beta)}{a(\alpha)}\cdot \frac{b(\beta)}{b(\alpha)}=(1+A)(1-B)=1+B\big( A(\frac{1}{B}-1)-1\big)>1+B\big( L(v,w)-1\big)\geq 1.   \qedhere
\]
\end{proof}

\begin{lemma}\label{lem:num_irr}
Let $\Gamma$ be a distance-regular graph with the quotient matrix $\boldsymbol{B} (k,d+1,c)$. 
Let $u_j^{(i)}=j \pi /(i+1)$ and $d'=\lfloor (d-1)/2 \rfloor$. 
If $L(u_{d'-j-1}^{(d-2)},u_{d'-j}^{(d-2)})\geq 1$ for some integer $j$ with $0 \leq j < d'-(d+3)/4$, 
then the number of positive eigenvalues $\theta$ of $\Gamma$ such that $\theta^2$ is irrational is less than or equal to $j$. 
\end{lemma}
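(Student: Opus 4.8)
The plan is to use the hypothesis to locate the index at which the multiplicities of the positive non-trivial eigenvalues are largest, and then to count using the fact that an eigenvalue whose square is irrational sits in a Galois-conjugacy class of eigenvalues all of the same multiplicity. Let $\theta_1>\theta_2>\cdots>\theta_{d'}>0$ be the positive non-trivial eigenvalues, write $\theta_i=2\sqrt{k-1}\cos\alpha_i$ with $0<\alpha_1<\cdots<\alpha_{d'}<\pi/2$, and put $m_i=m_{\theta_i}$. As recorded just before the lemma, the interlacing of the zeros of $\mathscr{S}_d$ with the zeros of $G_{d-1}$ and $G_{d-2}$ gives $u_i^{(d-1)}<\alpha_i<u_i^{(d-2)}$ for every $i$, where $u_j^{(i)}=j\pi/(i+1)$. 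The hypothesis $j<d'-(d+3)/4$ is exactly the statement $u_{d'-j-1}^{(d-2)}>\pi/4$; combined with $L(u_{d'-j-1}^{(d-2)},u_{d'-j}^{(d-2)})\ge 1>0$, which forces $u_{d'-j}^{(d-2)}<\pi/2$, it guarantees that the index range $\lceil d/4\rceil+1\le i\le d'-j$ is non-empty, and that $\alpha_{i-1}>u_{i-1}^{(d-1)}=(i-1)\pi/d\ge\pi/4$ for every $i$ in it.

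The first step is to show $m_{\lceil d/4\rceil}<m_{\lceil d/4\rceil+1}<\cdots<m_{d'-j}$. Because $1+\cos 2v=2\cos^2 v$ is decreasing on $(0,\pi/2)$ and $\sin^2 2w$ is decreasing on $(\pi/4,\pi/2)$, the quantity $L(v,w)=\tfrac{3\sqrt{3}(d-1)}{4}\bigl(1-\tfrac2k\bigr)^2(1+\cos 2v)\sin^2 2w$ of Lemma~\ref{lem:L} is strictly decreasing in each argument on $(\pi/4,\pi/2)$. Fix $i$ in the above range. Then $\pi/4<\alpha_{i-1}<\alpha_i<\pi/2$, and from $\alpha_{i-1}<u_{i-1}^{(d-2)}\le u_{d'-j-1}^{(d-2)}$, $\alpha_i<u_i^{(d-2)}\le u_{d'-j}^{(d-2)}$ and the monotonicity of $L$ we obtain $L(\alpha_{i-1},\alpha_i)>L(u_{d'-j-1}^{(d-2)},u_{d'-j}^{(d-2)})\ge 1$. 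Applying Lemma~\ref{lem:L>1} with $v=\alpha_{i-1}$, $w=\alpha_i$ then gives $m_{i-1}<m_i$, as wanted.

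By Lemma~\ref{lem:unimodal} the sequence $(m_i)_{i=1}^{d'}$ is unimodal, say $m_1<\cdots<m_{i^\ast}\ge m_{i^\ast+1}>\cdots>m_{d'}$. The strict increase on $\{\lceil d/4\rceil,\dots,d'-j\}$ just proved cannot coexist with $i^\ast<d'-j$, since from the peak onward $m$ is non-increasing; hence $i^\ast\ge d'-j$, leaving at most $d'-i^\ast\le j$ indices $i$ with $i>i^\ast$. Now let $\theta>0$ be an eigenvalue with $\theta^2$ irrational. Then $\theta^2$ is an irrational eigenvalue of the integer positive semidefinite matrix $\boldsymbol{N}\boldsymbol{N}^\top$ (cf.~\eqref{eq:AB}), so each of its Galois conjugates is again an eigenvalue of $\boldsymbol{N}\boldsymbol{N}^\top$---hence non-negative, and positive since it is conjugate to an irrational---and therefore equals $\theta_i^2$ for some $i$, with multiplicity $m_i$ equal to that of $\theta^2$. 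Unimodality implies no value is attained by the $m_i$ more than twice, so the conjugacy class of $\theta^2$ is exactly $\{\theta_a^2,\theta_b^2\}$ for one pair $a<b$ with $m_a=m_b$, and unimodality forces $a\le i^\ast<b$. Distinct conjugacy classes have distinct multiplicities, hence occupy distinct indices $b>i^\ast$; there are therefore at most $d'-i^\ast\le j$ such classes, which yields the claimed bound on the number of positive eigenvalues $\theta$ with $\theta^2$ irrational.

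The crux is the first two steps: squeezing out of the single inequality $L(u_{d'-j-1}^{(d-2)},u_{d'-j}^{(d-2)})\ge 1$, the interlacing bounds on the $\alpha_i$, and the monotonicity of $L$, strict monotonicity of the multiplicities exactly up to the index $d'-j$, and then converting this analytic fact into $i^\ast\ge d'-j$ via Lemma~\ref{lem:unimodal}. The delicate points are the treatment of the boundary index $d'-j$ (where $u_{d'-j}^{(d-2)}$ is closest to $\pi/2$, so $L$ is smallest) and checking that the block $\lceil d/4\rceil+1\le i\le d'-j$ is genuinely non-empty under the hypothesis; after $i^\ast$ is pinned down, the Galois-conjugacy count is routine.
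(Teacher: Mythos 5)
Your proof is correct and follows essentially the same route as the paper: the hypothesis places $u_{d'-j-1}^{(d-2)}$ above $\pi/4$, Lemma~\ref{lem:L>1} forces the multiplicities to still be increasing at index $d'-j$, and unimodality (Lemma~\ref{lem:unimodal}) confines all repeated multiplicities to a tail of length at most $j$, into which each Galois-conjugate class must send a member. The one genuine (but minor) difference is that you apply Lemma~\ref{lem:L>1} directly to each pair $(\alpha_{i-1},\alpha_i)$ after checking $L(\alpha_{i-1},\alpha_i)\geq 1$ via the monotonicity of $L$ in both arguments, whereas the paper applies it once to $(u_{d'-j-1}^{(d-2)},u_{d'-j}^{(d-2)})$ and then transfers the inequality to $(\alpha_{d'-j-1},\alpha_{d'-j})$ using the unimodality of $m$ as a continuous function of $u$; both are valid, and yours avoids that transfer step (and makes the final Galois-conjugacy count more explicit than the paper does). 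One caveat, which you inherit from the paper rather than introduce: what the argument actually bounds by $j$ is the number of conjugate \emph{pairs} $\{\theta_a^2,\theta_b^2\}$ (equivalently, of irreducible quadratic factors of $H_d$ over $\mathbb{Q}$), so the literal count of positive eigenvalues $\theta$ with $\theta^2$ irrational that it yields is $2j$ rather than $j$; since Proposition~\ref{prop:2} only uses the pair/factor count, nothing downstream is affected, but your closing sentence equating ``at most $j$ classes'' with ``at most $j$ eigenvalues'' should be stated as a count of classes.
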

\begin{proof}
The inequality $j < d'-(d+3)/4$ implies $u_{d'-j-1}^{(d-2)}>\pi/4$. 
If $\theta=2 \sqrt{k-1} \cos u$, we write $m_\theta=m(u)$. 
 %Let 
%$\theta_{d'-j-1}'=2\sqrt{k-1} \cos u_{d'-j-1}^{(d-2)}$
%and 
%$\theta_{d'-j}'=2\sqrt{k-1} \cos u_{d'-j}^{(d-2)}$. 
By Lemma \ref{lem:L>1}, we have
$
m(u_{d'-j-1}^{(d-2)})<m(u_{d'-j}^{(d-2)})
$.  
Note that the eigenvalues $\theta_i=2 \sqrt{k-1} \cos \alpha_i$ satisfy  
 $u^{(d-1)}_{i}<\alpha_i< u^{(d-2)}_{i}$. 
By $
m(u_{d'-j-1}^{(d-2)})<m(u_{d'-j}^{(d-2)})
$ and the unimodal property of the function $m(u)$, we have  $m(\alpha_{d'-j-1})<m(\alpha_{d'-j})$.  
Note that if $\theta^2$ is irrational, then $m_\theta=m_{\theta'}$ for some eigenvalue $\theta'$ with $\theta' \ne \theta$. 
The assertion therefore follows from $m(\alpha_{d'-j-1})<m(\alpha_{d'-j})$ and Lemma~\ref{lem:unimodal}.  
\end{proof}

\begin{proposition} \label{prop:2}
There does not exist a 
distance-regular graph $\Gamma$ with the quotient matrix $\boldsymbol{B} (k,d+1,c)$ for  $d=11, 16,24,25,26$. 
\end{proposition}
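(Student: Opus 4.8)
The plan is to dispose of the five diameters $d\in\{11,16,24,25,26\}$ left over by Proposition~\ref{prop:1} one at a time, combining the unimodality and multiplicity estimates of Lemmas~\ref{lem:unimodal}--\ref{lem:num_irr} with a finite-field factorization test of the kind used in Proposition~\ref{prop:1}. Recall that the characteristic polynomial of $\boldsymbol{B}(k,d+1,c)$ is $(x^2-k^2)\,\mathscr{S}_d(x)$ with $\mathscr{S}_d(x)=(c-1)G_{d-3}(x)+G_{d-1}(x)$, and that in the variable $z=x^2/(k-1)$ one has $H_d(z)=(c-1)P_{m-1,\epsilon}(z)+(k-1)P_{m,\epsilon}(z)$, a polynomial of degree $d'=\lfloor(d-1)/2\rfloor$ whose roots are exactly the numbers $\phi=\theta^2/(k-1)$ for the $d'$ positive non-trivial eigenvalues $\theta$ of $\Gamma$. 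By~\eqref{eq:quad} each such multiplicity is a fixed ratio of two rational quadratics in $\phi$; hence every such $\phi$ satisfies a nontrivial rational quadratic equation, so $H_d$ factors over $\mathbb{Q}$ into linear factors and irreducible quadratic factors, any irrational root occurring with its Galois conjugate inside one quadratic. In particular two positive non-trivial eigenvalues can share a multiplicity only if their $\phi$-values are the two roots of one such quadratic.

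The new ingredient is Lemma~\ref{lem:num_irr}. For each $d$ I would pick an integer $j_d$ with $0\le j_d<d'-(d+3)/4$ for which $L\bigl(u^{(d-2)}_{d'-j_d-1},u^{(d-2)}_{d'-j_d}\bigr)\ge 1$; then the number of positive non-trivial eigenvalues with irrational square is at most $j_d$, and since these occur in Galois-conjugate pairs it is even, so $H_d$ has at most $\lfloor j_d/2\rfloor$ irreducible quadratic factors and every other factor linear. A short estimate of $L$ (the worst case being $k=3$, where $(1-2/k)^2=1/9$) shows one may take $j_{16}=2$, $j_{24}=j_{26}=3$ and $j_{25}=4$ for every $k\ge 3$. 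For $d=11$ the inequality $L\ge 1$ holds only once $k$ is moderately large (roughly $k\ge 5$), so the valencies $k=3$ and $k=4$ must be treated separately: here $d'=5$, so by the previous paragraph $H_{11}$ necessarily has a rational root $z_0$, and since $\theta^2=(k-1)z_0$ must be a rational algebraic integer in $(0,4(k-1))$ we get $z_0\in\tfrac1{k-1}\mathbb{Z}\cap(0,4)$; for $k\in\{3,4\}$ only finitely many pairs $(z_0,c)$ arise and one checks by direct substitution into $(c-1)P_{4,0}(z_0)+(k-1)P_{5,0}(z_0)$ that none is a zero (using, for instance, that $P_{5,0}$ is irreducible over $\mathbb{Q}$).

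With the bound on the number of quadratic factors in hand, I would carry out, for each $d$, the following computation: choose a suitable small prime $p$ (possibly a different one for each $d$, as with the primes $5,7,43$ in Proposition~\ref{prop:1}) and verify that for every residue pair $(a,b)\in{\rm GF}(p)^2$ with $b\not\equiv 0$ the polynomial $aP_{m-1,\epsilon}(z)+bP_{m,\epsilon}(z)$ over ${\rm GF}(p)$ either has an irreducible factor of degree $\ge 3$ or has more than $\lfloor j_d/2\rfloor$ irreducible quadratic factors. Since reduction modulo $p$ cannot merge irreducible factors, and passing to the primitive part (dividing out $\gcd(c-1,k-1)$) leaves the roots unchanged, this contradicts the factorization shape forced above, so no such $\Gamma$ exists. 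The residue classes with $p\mid b$ (where the $z$-degree drops) or $p\mid\gcd(a,b)$ are dealt with by the same bookkeeping, using that such a class would force $p\mid(k-1)$, respectively $p\mid\gcd(c-1,k-1)$, for the hypothetical graph.

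I expect the main obstacle to be the interplay of the two ingredients at small valency: Lemma~\ref{lem:num_irr} is weakest exactly when $(1-2/k)^2$ is smallest, namely $k=3$, so the clean ``$L\ge 1$ for a small $j$'' argument degrades there and one is forced into an explicit case distinction on $k$ (as above for $d=11$), the point of concern being whether the finitely many exceptional triples $(k,c,z_0)$ might actually yield a graph. A secondary, purely computational, difficulty is to check for each of the five values of $d$ that \emph{some} prime $p$ makes the finite-field test succeed simultaneously over all residue pairs; this is a finite search whose success is not guaranteed in advance and may require trying several primes.
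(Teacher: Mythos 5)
Your proposal follows essentially the same route as the paper: Lemma~\ref{lem:num_irr} bounds the number of irrational $\phi=\theta^2/(k-1)$, the quadratic multiplicity formula \eqref{eq:quad} forces $\hat{H}_d$ to split over $\mathbb{Q}$ into linear factors and boundedly many irreducible quadratics, and a computer factorization over a small finite field (or over $\mathbb{Q}$ itself at small valency) contradicts that shape. The only substantive difference is bookkeeping: the paper's computations indicate the $L\ge 1$ criterion also degrades at small $k$ for $d=24,25,26$ (not only $d=11$), and it handles those valencies by factoring $c'P_{m-1,\epsilon}+k'P_{m,\epsilon}$ over $\mathbb{Q}$ directly rather than by your rational-root search, but these are equivalent finite checks.
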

\begin{proof}
By Lemma~\ref{lem:num_irr}, we can estimate the number of irrational square eigenvalues $(k-1)\phi=\theta^2$ by checking $L(u_{d'-j-1}^{(d-2)},u_{d'-j}^{(d-2)})>1$ with a computer.  

For $d=11$, the number of  irrational $\phi$ is at most $1$ for $k\geq 5$. 
For $k=3,4$, we can find an irreducible polynomial of degree at least $3$ as a factor of $c'P_{m-1,\epsilon}(z)+k'P_{m,\epsilon}(z)$ over $\mathbb{Q}$ for each pair $(c',k')$ with $0\leq c'\leq k'-1$.  
We can find an irreducible polynomial of degree at least $3$ or two irreducible polynomials of degree $2$ as a factor of $c'P_{m-1,\epsilon}(z)+k'P_{m,\epsilon}(z)$ over ${\rm GF}(2)$  for each pair $(c',k') \in {\rm GF}(2)\times {\rm GF}(2)$. 

For $d=16$, the number of  irrational $\phi$ is at most $2$ for $k\geq 3$. 
We can find an irreducible polynomial of degree at least $3$ or three irreducible polynomials of degree $2$ as a factor of $c'P_{m-1,\epsilon}(z)+k'P_{m,\epsilon}(z)$ over ${\rm GF}(3)$  for each pair $(c',k') \in {\rm GF}(3)\times {\rm GF}(3)$. 

For $d=25$, the number of  irrational $\phi$ is at most $2$ for $k\geq 6$.
For $k=3,4,5$, we can find an irreducible polynomial of degree at least $3$ as a factor of $c'P_{m-1,\epsilon}(z)+k'P_{m,\epsilon}(z)$ over $\mathbb{Q}$ for each pair $(c',k')$ with $0\leq c'\leq k'-1$.  
We can find an irreducible polynomial of degree at least $3$ or three irreducible polynomials of degree $2$ as a factor of $c'P_{m-1,\epsilon}(z)+k'P_{m,\epsilon}(z)$ over ${\rm GF}(3)$  for each pair $(c',k') \in {\rm GF}(3)\times {\rm GF}(3)$. 

For $d=24, 26$, the number of  irrational $\phi$ is at most $2$ for $k\geq 4$. 
For $k=3$, we can find an irreducible polynomial of degree at least $3$ as a factor of $c'P_{m-1,\epsilon}(z)+k'P_{m,\epsilon}(z)$ over $\mathbb{Q}$ for each $c'$ with $0\leq c'\leq k'-1$.
We can find an irreducible polynomial of degree at least $3$ or three irreducible polynomials of degree $2$ as a factor of $c'P_{m-1,\epsilon}(z)+k'P_{m,\epsilon}(z)$ over ${\rm GF}(3)$  for each pair $(c',k') \in {\rm GF}(3)\times {\rm GF}(3)$. 
\end{proof}

Theorem~\ref{thm:non_existence} follows
from Table~\ref{tb:d_list} and Propositions \ref{prop:1}, \ref{prop:2}.

\section{Conclusions}\label{sec:conclusions}

In this paper, we studied $b(k,\theta)$, the maximum number of vertices in bipartite regular graph of valency $k$ whose second largest eigenvalue is at most $\theta$. Our results extend previous work from \cite{CKNV16, HJ12, HJ14, CLS96,TY00}. Our general bound for $b(k,\theta)$ is attained whenever there exists a bipartite distance-regular graph of valency $k$, second largest eigenvalue $\theta$, girth $g$ and diameter $d$ with $g\geq 2d-2$. For $d=3$ and $g \geq 4$ all the point-block incidence graphs of symmetric designs give equality in our bound so the situation is well-understood. For $d\geq 4$ we only have the Van Lint-Schrijver geometry besides the generalized polygons. We believe that for $d \geq 5$ the only examples must have $c=1$ and are generalized polygons.

\bigskip

\noindent
\textbf{Acknowledgments.} 

The authors thank Tatsuro Ito for providing useful information relating to Section~\ref{sec:bound_d} and the two anonymous referees for their useful comments and suggestions.

\end{document}